\newcommand{\lrang}[1]{\left\langle #1 \right\rangle}
\theoremstyle{plain}
\newtheorem{theorem}{Theorem}
\newtheorem{proposition}[theorem]{Proposition}
\newtheorem{lemma}[theorem]{Lemma}
\newtheorem{corollary}[theorem]{Corollary}
\theoremstyle{definition}
\theoremstyle{remark}
\newcommand{\C}{\mathbb{C}}
\newcommand{\N}{\mathbb{N}}
\newcommand{\R}{\mathbb{R}}
\newcommand{\bT}{\mathbb{T}}
\newcommand{\Z}{\mathbb{Z}}
\newcommand{\cH}{\mathcal{H}}
\newcommand{\cM}{\mathcal{M}}
\newcommand{\rS}{\mathscr{S}}
\newcommand{\op}{\text{Op}}
\renewcommand{\phi}{\varphi}
\DeclareMathOperator{\Tr}{Tr}
\newcommand{\Sp}{\text{Sp}}
\newcommand{\SL}{\text{SL}}
\renewcommand{\phi}{\varphi}
\title{Bounds on eigenfunctions of quantum cat maps\vspace{-0.5em}}
\author{Elena Kim}
\address[Elena Kim]{Massachusetts Institute of Technology, Department of Mathematics, Cambridge, MA 02142, USA}
\email{elenakim@mit.edu}
\author{Robert Koirala}
\address[Robert Koirala]{University of California San Diego, Department of Mathematics, La Jolla, CA 92093, USA}
\email{rkoirala@ucsd.edu}
\thanks{We would like to thank the anonymous referees for a careful
reading of the paper and their useful comments. The authors gratefully acknowledge Semyon Dyatlov, who is  partially supported by NSF CAREER grant DMS-1749858, for suggesting and mentoring this project. The first named author is supported by NSF GRFP under Grant No. 1745302. The second named author was supported by MIT UROP funding.}
\begin{document}

\begin{abstract}
We study $\ell^\infty$ norms of $\ell^2$-normalized eigenfunctions of quantum cat maps. For maps with short quantum periods (constructed by Bonechi and de Bièvre in \cite{Bonechi-DeBievre2000_Article_ExponentialMixingAndTimeScales}) we show that there exists a sequence of eigenfunctions $u$ with $\|u\|_{\infty}\gtrsim (\log N)^{-1/2}$. For general eigenfunctions we show the upper bound $\|u\|_\infty\lesssim (\log N)^{-1/2}$. Here the semiclassical parameter is $h=(2\pi N)^{-1}$. Our upper bound is analogous to the one proved by B\'{e}rard in \cite{berard} for compact Riemannian manifolds without conjugate points.
\end{abstract}

\maketitle

\section{Introduction}
In this paper, we build on an existing body of work that examines the extreme values of eigenfunctions of classically chaotic quantum systems. Specifically, we examine the quantum cat map, one of the best-studied models.  First introduced by  Berry and Hannay in \cite{HB1980}, cat maps are the quantization of hyperbolic linear maps in $\SL(2, \Z)$ on the 2-dimensional torus.

The quantum cat map is a toy model in quantum chaos. Another
standard class of quantum chaos models is given by Laplacian eigenfunctions on negatively curved compact manifolds $(M,g)$, satisfying $-\Delta_g u=\lambda^2u$ and normalized so that $\|u\|_{L^2}=1$.
Estimating the suprema of these Laplacian eigenfunctions has been an area of great interest.  
For example, Rudnick and Sarnak in \cite{Rudnick-Sarnak} showed that on hyperbolic 3-manifolds, there exists a sequence of eigenstates $u_k$ such that $\|u_k\|_{L^\infty} \gg \lambda^{1/4}$.
In regard to upper bounds, Levitan, Avakumovi\'{c}, and H\"{o}rmander in  \cite{levitan}, \cite{avakumovic}, \cite{hormander}, respectively, proved the well-known result that on a compact Riemannian manifold, $\|u\|_{L^\infty} \lesssim \lambda^{\frac{n-1}{2}}$ where $n=\dim M$. However, it is conjectured that much stronger results hold. Specifically, in \cite{iwaniecsarnak}, Iwaniec and Sarnak  conjectured that for surfaces of constant negative curvature, $\|u\|_{L^\infty} \lesssim_\varepsilon \lambda^{\varepsilon}$ for all $\varepsilon>0$;
in the special case of Hecke eigenfunctions on arithmetic surfaces they obtained the bound $\|u\|_{L^\infty}\lesssim_\varepsilon \lambda^{\frac5{12}+\varepsilon}$. The best known bound outside of the arithmetic cases is the result of B\'erard~\cite{berard}: when $(M, g)$ has no conjugate points,  $\|u\|_{L^\infty} = O(\lambda^{\frac{n-1}{2}}/\sqrt{\log \lambda})$. 

In this paper, we are concerned with metaplectic transformations, the quantizations of hyperbolic maps
\begin{equation}
  \label{e:A-intro}
A =\begin{bmatrix} a & b \\ c & d \end{bmatrix}\in \SL(2, \Z),\quad
|a+d|>2,\quad
ab,cd\in 2\mathbb Z.
\end{equation}
We decompose $L^2(\R^n)$ into a direct integral of finite-dimensional spaces $\cH_N(\theta)$, where $\theta \in \bT^{2}$ and $N \in \N$. 
As shown in Section \ref{preliminaries}, the condition that $ab,cd$ be even is needed to make sure that the metaplectic map
associated to~$A$ descends to a map from $\cH_N(0)$ to itself. We denote the resulting quantum cat map by $M_{N,0}$. An explicit basis for each $\cH_N(\theta)$ is given by Lemma \ref{lem:basis}. For $u \in \cH_N(\theta)$, we use $\|u\|_{\ell^p}$ to denote the standard $\ell^p$ norms applied to coefficients of this basis. We prove Theorem \ref{thm:lowerbound} and Theorem \ref{thm:upperbound}, bounds on the $\ell^\infty$ norm of eigenfunctions on~$\cH_N(0)$. See Figure~\ref{fig:norms} for a numerical illustration.
Note that due to the normalization in the spaces $\ell^2$ and $\ell^\infty$, the standard bound on Laplacian eigenfunctions
$\|u\|_{L^2}\lesssim \|u\|_{L^\infty}\lesssim \lambda^{\frac{n-1}{2}}\|u\|_{L^2}$ becomes the bound $\frac{1}{\sqrt{N}}\|u\|_{\ell^2}\leq\|u\|_{\ell^\infty}\leq \|u\|_{\ell^2}$.

In \cite{Bonechi-DeBievre2000_Article_ExponentialMixingAndTimeScales}, Bonechi and De Bi{\`e}vre prove that for each $A$, there exists a sequence of $M_{N,0}$ with ``short" periods. Faure, Nonnenmacher, and Bi{\`e}vre use this result in \cite{scarred} to show that there exists a sequence of eigenfunctions that are localized, as quantified by semiclassical measures. We also utilize \cite{Bonechi-DeBievre2000_Article_ExponentialMixingAndTimeScales} to show there exists a sequence of $M_{N, 0}$ with localized eigenfunctions, as demonstrated by the following lower bound.
\begin{theorem}\label{thm:lowerbound}
Suppose $A$ is a matrix satisfying~\eqref{e:A-intro} with positive eigenvalues, even trace, and coprime off-diagonal terms. Then we can find a sequence of odd integers $N_k\to\infty$ such that for all $\varepsilon>0$, there exists $k_0$ such that for all $k\geq k_0$,  there exists an eigenfunction $u$ of $M_{N,0}$ with $\|u\|_{\ell^2}=1$ and
\begin{align}
 \|u\|_{\ell^\infty} \geq \frac{1-\varepsilon}{\sqrt{2\log_\lambda N_k}}.
\end{align}
\end{theorem}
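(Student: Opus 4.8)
The plan is to convert the shortness of the quantum period into a large eigenfunction coordinate by a pigeonhole argument. Fix a matrix $A$ as in the statement, write $\lambda>1$ for its larger eigenvalue, and let $\{e_j\}_{j\in\Z/N\Z}$ be the standard orthonormal basis of $\cH_N(0)\cong\ell^2(\Z/N\Z)$, so that $\|e_0\|_{\ell^2}=1$ and $\|v\|_{\ell^\infty}\ge|\langle v,e_0\rangle|$ for every $v\in\cH_N(0)$. Since $M_{N,0}$ is a quantization of the classically periodic-modulo-$2N$ map $A$, it is standard that some power of $M_{N,0}$ is a scalar multiple of the identity; let $P=P(N)$ be the quantum period, i.e. the least positive integer with $M_{N,0}^{P}=cI$, where necessarily $|c|=1$ by unitarity.

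First I would record that every eigenvalue $\mu$ of $M_{N,0}$ satisfies $\mu^{P}=c$, so $M_{N,0}$ has at most $P$ distinct eigenvalues $\mu_1,\dots,\mu_r$ with $r\le P$; let $\Pi_1,\dots,\Pi_r$ be the corresponding orthogonal spectral projections, so $\sum_\ell\Pi_\ell=I$. From $\sum_{\ell=1}^r\|\Pi_\ell e_0\|_{\ell^2}^2=\|e_0\|_{\ell^2}^2=1$ and $r\le P$ there is an index $\ell_*$ with $\|\Pi_{\ell_*}e_0\|_{\ell^2}^2\ge 1/P>0$. Then
\begin{equation*}
u:=\frac{\Pi_{\ell_*}e_0}{\|\Pi_{\ell_*}e_0\|_{\ell^2}}
\end{equation*}
is a well-defined $\ell^2$-normalized eigenfunction of $M_{N,0}$, and since $\Pi_{\ell_*}$ is an orthogonal projection,
\begin{equation*}
\langle u,e_0\rangle=\frac{\langle\Pi_{\ell_*}e_0,e_0\rangle}{\|\Pi_{\ell_*}e_0\|_{\ell^2}}=\frac{\|\Pi_{\ell_*}e_0\|_{\ell^2}^2}{\|\Pi_{\ell_*}e_0\|_{\ell^2}}=\|\Pi_{\ell_*}e_0\|_{\ell^2}\ge\frac{1}{\sqrt{P(N)}},
\end{equation*}
whence $\|u\|_{\ell^\infty}\ge 1/\sqrt{P(N)}$.

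It then remains to invoke the Bonechi and de Bièvre construction of maps with short quantum periods: under the hypotheses on $A$ (positive eigenvalues, even trace, coprime off-diagonal entries) there is a sequence of odd integers $N_k\to\infty$ along which $P(N_k)\le(2+o(1))\log_\lambda N_k$ as $k\to\infty$ — equivalently $\lambda^{P(N_k)}\lesssim N_k^{2}$, which is the smallest order of magnitude a quantum period can have. For a given $\varepsilon>0$ this gives $P(N_k)\le (1-\varepsilon)^{-2}\cdot 2\log_\lambda N_k$ for all large $k$, and combining with the previous paragraph yields an eigenfunction $u$ of $M_{N_k,0}$ with $\|u\|_{\ell^2}=1$ and $\|u\|_{\ell^\infty}\ge 1/\sqrt{P(N_k)}\ge (1-\varepsilon)/\sqrt{2\log_\lambda N_k}$, as desired.

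The pigeonhole step is elementary; the real content — and the only genuine obstacle — is the input on short quantum periods, i.e. producing an explicit infinite sequence $N_k$ along which the period is asymptotic to twice the Ehrenfest time $2\log_\lambda N_k$, and pinning down the constant $2$ exactly (a smaller constant is impossible, which is why $1/\sqrt{2}$ is the best this method yields). This is precisely where the arithmetic hypotheses on $A$ enter: coprimality of the off-diagonal entries together with the parity condition is what makes the number-theoretic construction of such $N_k$ go through (and, in the parity case, what makes $M_{N,0}$ act on $\cH_N(0)$ in the first place). I would therefore organize the proof as: (i) the spectral/pigeonhole estimate above, stated for an arbitrary $N$; and (ii) the construction of the sequence $N_k$, following Bonechi and de Bièvre, after which the theorem follows by substitution.
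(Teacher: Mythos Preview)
Your proof is correct and follows the same overall strategy as the paper: both combine (i) the Bonechi--de Bi\`evre input that along a sequence of odd $N_k$ the quantum period satisfies $P(N_k)\le 2\log_\lambda N_k+1$, with (ii) an elementary projection/pigeonhole estimate yielding an eigenfunction with $\|u\|_{\ell^\infty}\ge 1/\sqrt{P(N_k)}$.

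The only difference is in how the pigeonhole is run. The paper first observes that since there are at most $P$ eigenvalues, some eigenspace $V$ has $\dim V\ge N/P$, and then pigeonholes over the coordinate vectors: $\sum_j\|\Pi_V e_j\|^2=\dim V\ge N/P$ forces some $\|\Pi_V e_j\|\ge 1/\sqrt{P}$. You instead fix the single vector $e_0$ and pigeonhole over the spectral projections: $\sum_\ell\|\Pi_\ell e_0\|^2=1$ with $r\le P$ summands forces some $\|\Pi_{\ell_*}e_0\|\ge 1/\sqrt{P}$. These are dual versions of the same estimate; yours is marginally more direct since it does not require first isolating a large eigenspace, while the paper's formulation (Proposition~\ref{prop:lowerbound}) is stated as a stand-alone linear-algebra fact about any subspace of large dimension.
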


Under more general assumptions, for all odd $N$, we have the following upper bound. 
\begin{theorem}\label{thm:upperbound}
Suppose $A$ is a matrix satisfying~\eqref{e:A-intro}. Then for $0 < \varepsilon < 1$, there exists $N_0$ such that for all odd $N \geq N_0$, if  $u$ is an eigenfunction of $M_{N, 0}$ with $\|u\|_{\ell^2}=1$ then 
\begin{align}
\|u\|_{\ell^\infty}\leq \frac{1}{\sqrt{(1-\varepsilon)\log_\lambda N}}.
\end{align}

\end{theorem}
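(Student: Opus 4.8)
The plan is to transplant B\'erard's averaging argument to the finite-dimensional setting. Since $u$ is an eigenfunction of the unitary operator $M_{N,0}$ on $\cH_N(0)\cong\C^N$, for every $k\in\Z$ we have $M_{N,0}^k u=\lambda_u^k u$ with $|\lambda_u|=1$; hence for each coordinate $j$,
\[
u_j=\langle e_j,u\rangle=\big\langle \lambda_u^k M_{N,0}^{-k}e_j,\,u\big\rangle .
\]
Averaging this identity over $k=0,1,\dots,T-1$, where $T=T(N)\in\N$ will be chosen later, gives $u_j=\langle w_j,u\rangle$ with $w_j:=\frac1T\sum_{k=0}^{T-1}\lambda_u^k M_{N,0}^{-k}e_j$, so Cauchy--Schwarz and $\|u\|_{\ell^2}=1$ yield $|u_j|\le\|w_j\|_{\ell^2}$. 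Expanding the square and using unitarity, $\|w_j\|_{\ell^2}^2=\frac1{T^2}\sum_{|m|<T}(T-|m|)\,\bar\lambda_u^{\,m}\,(M_{N,0}^m)_{jj}$; the $m=0$ term is exactly $\tfrac1T$, so $\|w_j\|_{\ell^2}^2\le\tfrac1T+\tfrac1T\sum_{1\le|m|<T}\big|(M_{N,0}^m)_{jj}\big|$, uniformly in $j$.

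The one nontrivial ingredient, which I would quote from the preliminaries, is a dispersive bound on the propagator kernel: writing $A^m=\begin{bmatrix}a_m&b_m\\ c_m&d_m\end{bmatrix}$, the cat map quantization satisfies $\big|(M_{N,0}^m)_{jk}\big|\le\sqrt{\gcd(c_m,N)/N}$ for all $j,k$ (this is the Hannay--Berry/Gauss-sum computation of $M_{N,0}^m$, which equals the quantization of $A^m$ up to a unimodular scalar). Since $A$ is hyperbolic with $|\operatorname{tr}A|>2$, Cayley--Hamilton gives $c_m=c\,(\lambda^m-\lambda^{-m})/(\lambda-\lambda^{-1})$, which is nonzero for $m\ne0$ (a hyperbolic $\SL(2,\Z)$ matrix has no rational invariant line, so $c\ne0$ and $c_m\ne0$) and obeys $|c_m|\le C_0\lambda^{|m|}$; hence $\gcd(c_m,N)\le|c_m|\le C_0\lambda^{|m|}$ and $\big|(M_{N,0}^m)_{jj}\big|\le\sqrt{C_0}\,\lambda^{|m|/2}/\sqrt N$. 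Summing the resulting geometric series, $\sum_{1\le|m|<T}\big|(M_{N,0}^m)_{jj}\big|\le C_1\lambda^{T/2}/\sqrt N$, so $\|w_j\|_{\ell^2}^2\le\tfrac1T\big(1+C_1\lambda^{T/2}/\sqrt N\big)$ uniformly in $j$.

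To finish, choose $T=T(N)=\lfloor(1-\tfrac\varepsilon2)\log_\lambda N\rfloor$, so that $\lambda^{T/2}/\sqrt N\le N^{-\varepsilon/4}\to0$ and $T\ge(1-\varepsilon)\log_\lambda N$ once $N$ is large; then $\|u\|_{\ell^\infty}=\max_j|u_j|\le\max_j\|w_j\|_{\ell^2}\le(1+o(1))/\sqrt T\le 1/\sqrt{(1-\varepsilon)\log_\lambda N}$ for all odd $N\ge N_0$, which is the claim. (When $\operatorname{tr}A<-2$ the relevant eigenvalue $\lambda$ is negative and one simply replaces $\lambda$ by $|\lambda|$ throughout; the averaging is otherwise unchanged.) I expect the genuine work to be concentrated entirely in the kernel bound $|(M_{N,0}^m)_{jk}|\le\sqrt{\gcd(c_m,N)/N}$: it needs the explicit metaplectic quantization on $\cH_N(0)$, a uniform estimate for the quadratic Gauss sums that appear (including the case $\gcd(c_m,N)>1$ and the oddness of $N$), and the identification of $M_{N,0}^m$ with the quantization of $A^m$ up to phase. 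Everything downstream of that is the soft averaging computation above, and the only mild point to watch is that the bound is applied only for $|m|<T\lesssim\log_\lambda N$, where $|c_m|<N$, so the degenerate regime $c_m\equiv 0\ (\mathrm{mod}\ N)$ never occurs.
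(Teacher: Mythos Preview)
Your proposal is correct and follows essentially the same route as the paper: the same averaging/$TT^*$ argument with $T\approx(1-\tfrac\varepsilon2)\log_\lambda N$, reducing everything to a dispersive kernel bound for powers of $M_{N,0}$, which the paper establishes as its Proposition~\ref{propupperbound}. The only cosmetic difference is that the paper proves and uses the estimate $|(M_{N,0}^m)_{jk}|\le\sqrt{|b_m|/N}$ (obtained from the explicit metaplectic integral kernel, with the off-diagonal entry $b$ appearing in the phase) rather than your $\sqrt{\gcd(c_m,N)/N}$; since you immediately bound $\gcd(c_m,N)\le|c_m|\lesssim\lambda^{|m|}$, the two arguments coincide downstream.
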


\begin{figure}
    \centering
    \includegraphics[scale=.39]{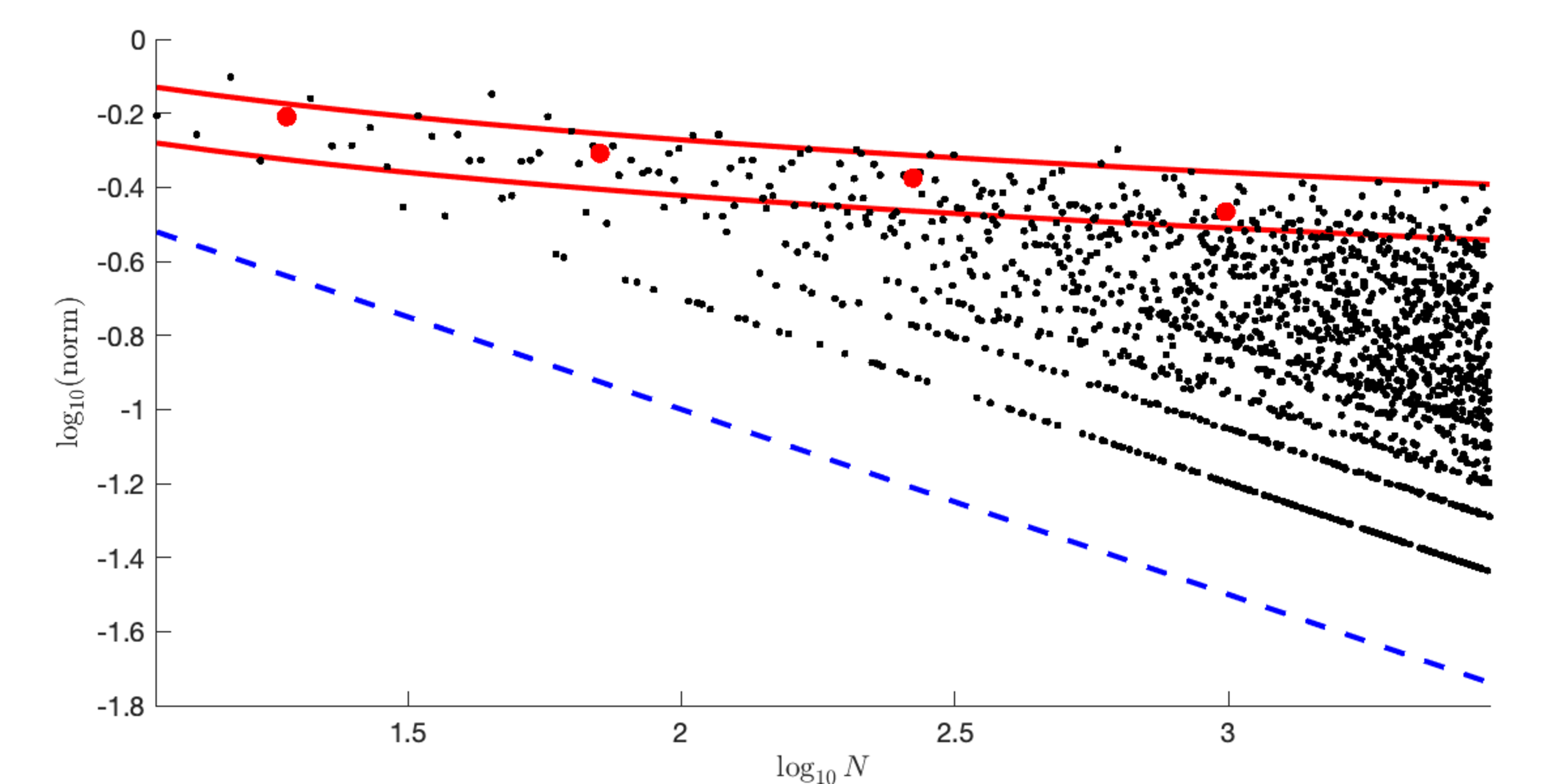}
    \caption{The plot of the maximal $\ell^\infty$-norm of an $\ell^2$-normalized eigenfunction of $M_{N,0}$ where $M_{N,0}$ is associated with $A=\begin{bmatrix} 2 & 3 \\ 1 & 2\end{bmatrix}$. The lower and upper bounds from Theorems \ref{thm:lowerbound} and \ref{thm:upperbound}, $(2\log_\lambda N)^{-1/2}$ and $(\log_\lambda N)^{-1/2}$, respectively, give the solid red lines. The dotted blue line is the trivial lower bound $N^{-1/2}$. Finally, the large red dots correspond to the sequence $N_k$ from  Theorem \ref{thm:lowerbound}.}
    \label{fig:norms}
\end{figure}

Analogous statements can be proven for a sequence of even $N$ and any $\theta$, using similar proofs to those of Theorems \ref{thm:lowerbound} and \ref{thm:upperbound}. However, we exclude these arguments as they are overly technical and do not introduce any novel ideas.

The $\ell^\infty$ bounds on eigenfunctions of quantum cat maps have been
extensively studied in arithmetic quantum chaos, see~\cite{Kurlberg-Rudnick-1,Kurlberg-1,Olofsson-1,Olofsson-2,Olofsson-3}. These works have focused on \emph{Hecke eigenfunctions},
which are joint eigenfunctions of the quantum cat map $M_{N,0}$ and the Hecke operators,
constructed in this setting by Kurlberg--Rudnick~\cite{Kurlberg-Rudnick-0}.
There always exists an orthonormal basis of $\mathcal H_N(0)$ consisting of Hecke eigenfunctions;
however, due to the possibility of large multiplicities of the eigenspaces of $M_{N,0}$ (see~\cite[footnote~3]{Kurlberg-Rudnick-0}) an upper
bound on the $\ell^\infty$ norm of Hecke eigenfunctions does not imply the same bound for general eigenfunctions. We list
below the known bounds on $\ell^2$-normalized Hecke eigenfunctions $u$:
\begin{itemize}
\item Kurlberg--Rudnick~\cite{Kurlberg-Rudnick-1} proved the upper bound $\|u\|_{\ell^\infty}\lesssim_\varepsilon N^{-\frac18+\varepsilon}$.
\item Building on~\cite{Kurlberg-Rudnick-1} (which handled roughly half of the prime values of $N$), Kurlberg~\cite{Kurlberg-1} showed that for all but finitely many \emph{prime} values of $N$ one
has the upper bound $\|u\|_{\ell^\infty}\leq 2N^{-\frac 12}$, and deduced the upper bound $\|u\|_{\ell^\infty}\lesssim_\varepsilon N^{-\frac12+\varepsilon}$ for \emph{square-free}
values of~$N$.
\item Olofsson~\cite{Olofsson-1,Olofsson-2} showed an upper bound $\|u\|_{\ell^\infty}\lesssim N^{-\frac14}$
for \emph{most} values of~$N$ (in the sense of density as $N\to\infty$). On the other hand, these
papers also construct eigenfunctions satisfying a lower bound $\|u\|_{\ell^\infty}\gtrsim N^{-\frac14}$
for most values of $N$ which are \emph{not square-free}.
\end{itemize}

\section{Preliminaries} \label{preliminaries}
We begin with a review of the necessary definitions for this paper. First, recall the semiclassical Weyl quantization. For $a \in \rS(\R^2)$ and a semiclassical parameter $h \in (0,1]$, 
$$\op_h(a)f(x) \coloneqq \frac{1}{2\pi h} \int_{\R^{2}} e^{\frac{i}{h} (x -x') \xi} a\left( \frac{x + x'}{2}, \xi \right) f(x') dx' d\xi, \quad  f \in \rS(\R).$$

Define the symbol class $$S(1) =\left\{a \in C^\infty\left(\R^{2}\right): \sup_{(x, \xi) \in \R^2} \left|\partial^\alpha_{(x, \xi)} a \right| < \infty \text{ for all } \alpha \in \N^{2}\right\},$$
which naturally induces the seminorms $\|a\|_{C^m} \coloneqq \max_{|\alpha| \leq m } \sup_{\R^{2}} |\partial_{(x, \xi)}^\alpha a |$ for $m \in \N_0$. From \cite{z12semiclassical}*{Theorem 4.16}, we know for $a \in S(1)$, $\op_h(a)$ acts on both $\mathscr{S}(\R)$ and  $\mathscr{S}'(\R)$.

Now, let $\omega=(y, \eta), z=(x, \xi) \in \R^{2}$. Define the \textit{standard symplectic form} $\sigma$ on $\R^{2}$ by $\sigma(z, \omega) \coloneqq \xi y - x \eta$ and define the \textit{quantum translation} by $U_\omega \coloneqq \op_h(a_\omega)$, where $a_\omega(z)\coloneqq \exp(\frac{i}{h} \sigma(\omega, z))$. Noting that $a_\omega(z) \in S(1)$, we see $U_\omega$ is well-defined and acts on $\mathscr{S}(\R)$. In \cite{z12semiclassical}*{Theorem 4.7}, it is shown that $$U_\omega f(x) = e^{\frac{i}{h} \eta x - \frac{i}{2h}y \eta } f(x-y).$$
Thus, $U_\omega$ is a unitary operator on $L^2(\R)$ that satisfies the following exact Egorov's theorem,
\begin{equation}\label{eq:egorov}
U^{-1}_\omega \op_h(a) U_\omega =\op_h(\tilde{a}) \quad \text{for all } a \in S(1), \quad \tilde{a}(z)\coloneqq a(z+\omega).
\end{equation}
From the fact that $U_\omega U_{\omega'} = e^{\frac{i}{2h} \sigma(\omega, \omega')} U_{\omega+\omega'}$, we deduce the following commutator formula,
\begin{equation}\label{eq:commutator}
U_\omega U_{\omega'} = e^{\frac{i}{h} \sigma(\omega, \omega')} U_{\omega'}U_\omega.
\end{equation}

Now let $\Sp(2, \R)$ be the group of real symplectic $2 \times 2$ matrices. In other words, $A \in \Sp(2, \R)$ if and only if $\sigma(Az, A\omega)=\sigma(z,\omega)$. Note that in this specific 2-dimensional case, $\Sp(2, \R)=\SL(2, \R)$.
For each $A \in \SL(2, \R)$, denote by $\cM_A$ the set of all unitary transformations $M :L^2(\R) \rightarrow L^2(\R)$ satisfying the following exact Egorov's theorem,
\begin{equation}\label{eq:MA}
M^{-1} \op_h(a) M= \op_h(a \circ A) \quad \text{for all } a \in S(1).
\end{equation}
From \cite{z12semiclassical}*{Theorem 11.9}, we have both existence of these transformations and uniqueness up to a unit factor.

Then, $\cM\coloneqq \cup_{A \in \SL (2, \R)} \cM_A$ is a subgroup of unitary transformations of $L^2(\R)$ called the \textit{metaplectic group} and the map $M \mapsto A$ is a group homomorphism $\cM \rightarrow \SL(2, \R)$. As a corollary of equation (\ref{eq:MA}), we obtain the following intertwining of the metaplectic and quantum transformations: 
$M^{-1} U_\omega M=U_{A^{-1} \omega}$ for all $M \in \cM_A$, $\omega \in \R^{2}.$

We turn our attention to quantizations of functions on the torus $\bT^{2} \coloneqq \R^{2}/ \Z^{2}$. Each $a \in C^\infty(\bT^{2})$ can be identified with a $\Z^{2}$-periodic function on $\R^{2}$. Note that any $a \in C^\infty(\bT^{2})$ is also an element of  $S(1)$, therefore its Weyl quantization $\op_h(a)$ is an operator on $L^2(\R)$.

By equation (\ref{eq:commutator}), we have the following commutation relations,
\begin{equation}\label{eq:commutation_relations}
\op_h(a) U_\omega = U_\omega \op_h(a) \quad \text{for all } a \in C^\infty(\bT^{2}), \quad \omega \in \Z^{2}.
\end{equation}
These commutation relations motivate a decomposition of  $L^2(\R)$ into a direct integral of finite dimensional spaces $\cH_N(\theta)$, where $\theta \in \bT^{2}$, such that $\op_h(a)$ descends onto these spaces. From \cite{Bouzouina-deBievre}*{Proposition 2.1}, to ensure the these spaces are nontrivial, for the rest of the paper, we assume
$$
h= (2\pi N)^{-1}\quad\text{where }N\in\mathbb N.
$$

We call $\cH_N(\theta)$ the space of \textit{quantum states}. Specifically, for each $\theta \in \bT^{2}$, set $$\cH_N(\theta)\coloneqq\left\{f \in \rS'(\R): U_\omega f=e^{2 \pi i \sigma(\theta, \omega) + N \pi i Q(\omega)} f  \text{ for all } \omega \in \Z^{2}\right\},$$ where the quadratic form $Q$ on $\R^{2}$ is defined by $Q(\omega)=y\eta$ for $\omega=(y, \eta) \in \R^{2}$. 
The following lemma gives an explicit basis for $\cH_N(\theta)$.
\begin{lemma}[\cite{dyatlov2021semiclassical}, Lemma 2.5]\label{lem:basis}
The space $\cH_N(\theta)$ is $N$-dimensional with a basis $\{e_j^\theta\}$ defined for 
 $j \in \{0, \ldots, N-1\}$ and $\theta=(\theta_x, \theta_\xi) \in \R^{2}$. In particular,
 $$e_j^\theta(x)\coloneqq\frac{1}{\sqrt{N}} \sum_{k \in \Z} e^{-2 \pi i \theta_\xi k} \delta\left(x- \frac{Nk+j-\theta_x}{N} \right).$$
\end{lemma}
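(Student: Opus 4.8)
The plan is to reduce the defining relation, quantified over all $\omega\in\Z^2$, to the two equations coming from the generators $\omega_1=(1,0)$ and $\omega_2=(0,1)$, to solve those two functional equations explicitly, and then to check separately that the resulting distributions satisfy the relation for \emph{every} $\omega\in\Z^2$. Using the formula $U_\omega f(x)=e^{\frac i h\eta x-\frac i{2h}y\eta}f(x-y)$ with $h^{-1}=2\pi N$, the $\omega_2$-relation becomes $e^{2\pi i Nx}f(x)=e^{-2\pi i\theta_x}f(x)$ and the $\omega_1$-relation becomes $f(x-1)=e^{2\pi i\theta_\xi}f(x)$, both understood in $\rS'(\R)$.

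First I would analyze the $\omega_2$-relation. The function $g(x):=e^{2\pi i Nx}-e^{-2\pi i\theta_x}$ is smooth with nowhere-vanishing derivative, and its zero set is exactly the progression $\{x_m:=(m-\theta_x)/N:m\in\Z\}$, all of whose zeros are simple. Hence $gf=0$ forces $f$ to be supported on $\{x_m\}$, and --- invoking the structure of distributions supported on a discrete set together with the simplicity of the zeros of $g$ --- $f$ cannot involve any derivative of $\delta$, so $f=\sum_{m\in\Z}c_m\,\delta(\cdot-x_m)$ for some scalars $c_m$. Substituting this into the $\omega_1$-relation and comparing coefficients (using $1+x_m=x_{m+N}$) yields the recursion $c_{m+N}=e^{-2\pi i\theta_\xi}c_m$, so $(c_m)_{m\in\Z}$ is determined by $(c_0,\dots,c_{N-1})$ and automatically has $N$-periodic modulus, hence is bounded and $f$ is genuinely tempered. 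Grouping the terms by the residue class of $m$ modulo $N$ then exhibits $f$ as a linear combination of the $N$ distributions $e_0^\theta,\dots,e_{N-1}^\theta$, which are linearly independent because they have pairwise disjoint supports. This proves $\cH_N(\theta)\subseteq\spn\{e_j^\theta\}$, in particular $\dim\cH_N(\theta)\le N$.

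For the reverse inclusion I would verify by a direct computation that each $e_j^\theta\in\cH_N(\theta)$, i.e. that $U_\omega e_j^\theta=e^{2\pi i\sigma(\theta,\omega)+N\pi i Q(\omega)}e_j^\theta$ for every $\omega=(m,n)\in\Z^2$. Writing $U_\omega f(x)=e^{2\pi i Nnx-\pi i Nmn}f(x-m)$, the shift by $m$ permutes the support $\{x_{Nk+j}\}$ of $e_j^\theta$ and produces a factor $e^{2\pi i\theta_\xi m}$, while multiplication by $e^{2\pi i Nnx}$ contributes $e^{-2\pi i n\theta_x}$, since $e^{2\pi i Nnx_\ell}=e^{-2\pi i n\theta_x}$ for all $\ell\in\Z$. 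The only leftover discrepancy, a factor $e^{-\pi i Nmn}$ versus the expected $e^{+N\pi i mn}$, is harmless because $Nmn\in\Z$. Together with the previous paragraph this gives $\cH_N(\theta)=\spn\{e_j^\theta\}$ and that $\{e_j^\theta\}_{j=0}^{N-1}$ is a basis, hence $\dim\cH_N(\theta)=N$.

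The step I expect to be the crux is the one in the second paragraph: deducing from the abstract hypotheses $f\in\rS'(\R)$ and $gf=0$ that $f$ is an honest --- not higher-order --- combination of Dirac masses. This is precisely where the simplicity of the zeros of $g$ enters, via the local argument near each zero $p$ (writing $g(x)=(x-p)u(x)$ with $u(p)\neq 0$ and inspecting the top-order coefficient); everything else is bookkeeping with the explicit formulas. Two minor points also deserve attention: that each $e_j^\theta$ is a well-defined tempered distribution, being a locally finite sum of Dirac masses with bounded coefficients, and that if one prefers to avoid the explicit computation in the third paragraph, the reduction to the two generators in the converse direction can instead be obtained from the compatibility of $\omega\mapsto e^{2\pi i\sigma(\theta,\omega)+N\pi i Q(\omega)}$ with the twisted law $U_\omega U_{\omega'}=e^{\frac i{2h}\sigma(\omega,\omega')}U_{\omega+\omega'}$.
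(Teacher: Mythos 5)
Your argument is correct. Note that the paper itself does not prove this lemma --- it is quoted from \cite{dyatlov2021semiclassical}*{Lemma 2.5} --- so there is no internal proof to compare against; your route (reduce to the generator relations $f(x-1)=e^{2\pi i\theta_\xi}f(x)$ and $e^{2\pi iNx}f=e^{-2\pi i\theta_x}f$, use the simplicity of the zeros of $e^{2\pi iNx}-e^{-2\pi i\theta_x}$ to rule out derivatives of $\delta$, solve the resulting recursion $c_{m+N}=e^{-2\pi i\theta_\xi}c_m$, and verify the relation for general $\omega=(m,n)$ directly) is the standard one and all the key steps, including the crux about distributions annihilated by a function with simple zeros and the sign bookkeeping $e^{-\pi iNmn}=e^{+\pi iNmn}$ for $Nmn\in\Z$, check out.
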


We fix an inner product $\lrang{\cdot, \cdot}_\cH$ on each quantum state $\cH_N(\theta)$ by requiring $\{e_j^\theta\}$ to be an orthonormal basis. It can be shown using translation idenitities for $e_j^\theta$ (see \cite{dyatlov2021semiclassical}*{(2.35)}) that although each $\{e_j^\theta\}$ depends on the choice of the representative $\theta_x \in \R$, the inner product depends only on $\theta \in \bT^{2}$. We often denote the norm defined by this inner product by $\| \cdot\|_{\ell^2}$. Using the bases $\{e_j^\theta\}$, we can consider the spaces $\cH_N(\theta)$ as fibers of a smooth $N$ dimensional vector bundle over $\bT^{2}$, which we call $\cH_N$. 

For $u = \sum_{j=0}^{N-1} \alpha_j e_j^\theta$, we set
$$\|u\|_{\ell^p} \coloneqq \|(\alpha_0, \ldots, \alpha_{N-1})\|_{\ell^p}.$$

Fix $N \in \N$ and $a \in C^\infty(\bT^{2})$ to define the quantization 
$$\op_{N, \theta}(a)\coloneqq \op_h(a)|_{\cH_N(\theta)} : \cH_N(\theta) \rightarrow \cH_N(\theta), \quad \theta \in \bT^{2},$$ which depends smoothly on $\theta$. 
This restriction holds by definition of $\cH_N(\theta)$ and the commutation relations given in equation (\ref{eq:commutation_relations}).

We set
\begin{equation}
\label{eq:A-matrix}
A= \begin{bmatrix}
a & b \\ c & d \end{bmatrix} \in \SL(2, \Z)
\end{equation}
and choose a metaplectic operator $M \in \cM_A$. Recall that for $\omega = (y, \eta), z=(x, \xi) \in \Z^2$, $\sigma(z, \omega) = \xi y - x \eta$ and $Q(\omega) = y \eta$.  By \cite{dyatlov2021semiclassical}*{Lemma 2.9}, there exists a unique $\phi_A \in (\Z/2)^2$ such that for all $\omega\in\mathbb Z^2$, $Q(A^{-1} \omega) - Q(\omega) = \sigma(\phi_A, \omega) \mod 2\Z$.  
Using the definition of $\cH_N(\theta)$ and that fact that $M^{-1} U_\omega M=U_{A^{-1} \omega}$, we can verify that $M(\cH_N(\theta)) \subset \cH(A \theta +\frac{ N \phi_A}{2})$ for all $\theta \in \bT^{2}$.

Denote $M_{N, \theta} \coloneqq M|_{\cH_N(\theta)} : \cH_N(\theta) \rightarrow \cH_N(A \theta +\frac{N \phi_A}{2})$, which depends smoothly on $\theta \in \bT^{2}$.  We require the domain and range of $M_{N, \theta}$ to be the same, in other words, we must have 
\begin{equation}\label{eq:domainrange}
(I-A)\theta =\frac{N \phi_A}{2} \mod \Z^{2}.
\end{equation}
Thus, when $\theta =0$, condition (\ref{eq:domainrange}) is satisfied when $N$ is even or $\phi_A =0 $.

We henceforth assume that $\phi_A=0$, which gives  $\sigma(\phi_A, \omega) =0 \mod \Z/2$. Noting that for all $\omega =(y, \eta)$,
$$Q(A^{-1} \omega)-Q(\omega) = (dy -b \eta)(-cy +a \eta) - y \eta=-dcy^2 +2 bc y \eta- ba \eta^2,$$
we must have $dcy^2 + ba \eta^2= 0 \mod 2\Z$.  We conclude that $dc$ and $ab$ must be even. 

Assuming condition  (\ref{eq:domainrange}), we have the following exact Egorov's theorem for all $a \in C^\infty(\bT^{2})$,
$$M_{N, \theta}^{-1} \op_{N, \theta} (a) M_{N, \theta} =\op_{N, \theta} (a \circ A).$$

Essential to our proof of Theorem \ref{thm:upperbound} is the following  explicit formula for $M$.
\begin{lemma}\label{lem:explicitformula}
When $A$ is given by~\eqref{eq:A-matrix} and $b \neq 0$,
an element of $\mathcal M_A$ is given by
\begin{equation} \label{eq:explictformM}
M u(x) = \frac{\sqrt{N}}{\sqrt{|b|}} \int_\R e^{2 \pi N  i\Phi(x, y)} u(y) dy,
\end{equation}
where 
\begin{equation}\label{explictformphi}
\Phi(x, y) = \frac{d}{2b} x^2 - \frac{xy}{b} + \frac{a}{2b} y^2.
\end{equation}
\end{lemma}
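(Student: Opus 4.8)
The plan is to verify directly that the operator $M$ defined by \eqref{eq:explictformM}–\eqref{explictformphi} satisfies the defining Egorov relation \eqref{eq:MA}, since by the uniqueness-up-to-phase statement quoted from \cite{z12semiclassical}*{Theorem 11.9} this (together with unitarity) is enough to conclude $M \in \mathcal M_A$. It suffices to check \eqref{eq:MA} on the generators $a = a_\omega$ of the algebra, i.e.\ to show $M U_\omega = U_{A\omega} M$ for all $\omega \in \R^2$ (equivalently $M^{-1} U_\omega M = U_{A^{-1}\omega}$), because the quantum translations $U_\omega$ generate enough of the symbol calculus and the intertwining relation $M^{-1}U_\omega M = U_{A^{-1}\omega}$ is equivalent to \eqref{eq:MA}. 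Concretely, I would take $\omega = (y_0,\eta_0)$, write out $(M U_\omega u)(x)$ using the explicit formula $U_\omega u(x) = e^{\frac{i}{h}\eta_0 x - \frac{i}{2h} y_0 \eta_0} u(x - y_0)$ together with \eqref{eq:explictformM}, substitute $y \mapsto y + y_0$ in the integral, and compare with $(U_{A\omega} M u)(x)$, where $A\omega = (ay_0 + b\eta_0,\ cy_0 + d\eta_0)$. Both sides are $\sqrt{N/|b|}$ times an integral of $u(y)$ against an exponential $e^{2\pi N i(\cdots)}$ that is quadratic in $x$ and $y$ with linear terms depending on $\omega$; the claim reduces to an identity between these two phase functions.

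The second ingredient is unitarity of $M$. Here I would recognize \eqref{eq:explictformM} as a (rescaled) metaplectic/quadratic-phase oscillatory integral operator: after the substitution making the phase $\frac{i}{h}$-homogeneous (recall $h = (2\pi N)^{-1}$), $M$ is up to the constant $\sqrt{N/|b|}$ a Fourier-integral operator with quadratic phase $\Phi(x,y) = \frac{d}{2b}x^2 - \frac{1}{b}xy + \frac{a}{2b}y^2$, whose associated canonical transformation one computes via $\xi = h^{-1}\partial_x \Phi$, $\eta = -h^{-1}\partial_y \Phi$ to be exactly $A$ (using $ad - bc = 1$). Unitarity then either follows from the standard fact that such normalized quadratic-phase operators are unitary, or can be checked by hand: compute $M^*M$ via Fourier inversion, noting that the $y$-quadratic terms in $\Phi(x,y) - \overline{\Phi(x',y)}$ cancel, leaving a pure Gaussian/oscillatory integral in $y$ that produces $\delta(x - x')$ with the constant $|b|/N$ cancelling the normalization $N/|b|$; the factor $1/b$ in the mixed term is what makes the stationary phase / Fourier inversion come out with the right Jacobian. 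I expect to invoke \cite{z12semiclassical} for the general unitarity statement rather than reproving it.

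The main obstacle — really the only place where care is needed — is bookkeeping the phases and normalization constants, in particular tracking factors of $2\pi$, $N$, $b$, and signs of $b$ (the $|b|$ versus $b$ distinction: the absolute value sits in the amplitude while the signed $b$ sits in $\Phi$, and one must check the two are consistent, e.g.\ by testing the case $b<0$ separately or absorbing the sign into an orientation). A clean way to sidestep some of this is to first treat the case $A = \begin{bmatrix} 0 & b \\ -1/b & 0\end{bmatrix}$-type "rescaled Fourier transform" and the lower/upper triangular cases (multiplication by a Gaussian $e^{\pi N i (d/b) x^2}$), for which unitarity and the Egorov relation are immediate, and then observe that the general formula \eqref{eq:explictformM} is the composition of such pieces and that compositions of quadratic-phase integrals multiply the phases correctly; but since the statement only claims that \emph{an} element of $\mathcal M_A$ is given by this formula, the most economical route is the direct verification of $M U_\omega = U_{A\omega} M$ plus a citation for unitarity. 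I would therefore present the direct computation as the body of the proof and relegate the constant-chasing to a remark or a short displayed calculation.
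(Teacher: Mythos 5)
Your proposal is correct, but it takes a different route from the paper. The paper does not verify the Egorov relation directly: it derives the formula from the general theory of quantizing linear symplectic maps (Zworski, Theorem 11.10), by writing down the twisted graph $\Lambda=\{(x,y,\xi,-\eta):(x,\xi)=A(y,\eta)\}$, observing that $b\neq 0$ makes the projection $(x,y,\xi,\eta)\mapsto(x,\eta)$ surjective so that $\Lambda$ admits a generating function, solving $\xi=\partial_x\Phi$, $\eta=-\partial_y\Phi$ to get \eqref{explictformphi}, and then citing the general result for the fact that the associated normalized quadratic-phase operator lies in $\mathcal M_A$. You instead verify by hand that the explicit operator intertwines the quantum translations, $MU_\omega=U_{A\omega}M$, and that $M^*M=I$; both computations go through (the phase identity you need reduces, after the shift $y\mapsto y+y_0$, to $ad-bc=1$, and $M^*M$ collapses to Fourier inversion in the variable carrying the mixed term $-xy/b$, with the Jacobian $|b|/N$ cancelling the normalization). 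Your approach is more self-contained and makes the role of $ad-bc=1$ and of the $|b|^{-1/2}$ normalization completely explicit, at the cost of more computation; the paper's approach is shorter but leans entirely on the cited theorem. The one step you should not leave implicit is the passage from the intertwining relation $M^{-1}U_\omega M=U_{A^{-1}\omega}$ back to \eqref{eq:MA} for general $a\in S(1)$: this follows from the Fourier decomposition $\op_h(a)=(2\pi h)^{-2}\int(\mathcal F_\sigma a)(\omega)\,U_\omega\,d\omega$ for $a\in\mathscr S$ together with $\det A=1$ and a density/continuity argument for $S(1)$, and it deserves a sentence rather than the phrase ``generate enough of the symbol calculus.''
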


We provide a short outline of this formula's derivation; for further details see Theorem 11.10 and its following remark in \cite{z12semiclassical}.
Set $\Lambda= \{(x, y, \xi, -\eta) : (x, \xi) = A(y, \eta)\}$ and note that $\Lambda$ is a Lagrangian submanifold of $\R^4$. Additionally, as $b \neq 0$, $\Lambda \ni (x,y, \xi, \eta) \mapsto (x, \eta)$ is surjective. Therefore, there exists a generating function $\Phi(x,y)$ such that $\Lambda=\{(x, y, \partial_x \Phi, \partial_y \Phi)\}$. 
As $\eta = -\partial_y \Phi$ and $\xi = \partial_x \Phi$, we know $\Phi(x, y)$ is given by (\ref{explictformphi}). 
We then have (\ref{eq:explictformM}), where the coefficient ensures $M$ is unitary, up to a unit factor. 

\section{Proof of Theorem \ref{thm:lowerbound}}\label{secproofoflowerbound}
First, following the presentation in \cite{Bonechi-DeBievre2000_Article_ExponentialMixingAndTimeScales}, in Section \ref{lowerboundsubsection1}, we show that for a sequence  $N_k$, $M_{N_k,0}$ has a large degenerate eigenspace. In Section \ref{lowerboundsubsection2}, we then reduce the proof of Theorem \ref{thm:lowerbound} to a linear algebra argument.

\subsection{Eigenspace of $M_{N_k, 0}$} \label{lowerboundsubsection1}

Consider $A$ given by~\eqref{eq:A-matrix} such that $b$ and $c$ are coprime and  $\Tr A$ is even and greater than 2.
Let $\lambda$ be the largest eigenvalue of $A$. Then for each $t \in \N$, 
\begin{equation} \label{eq:pdef}
A^t =p_t A-p_{t-1}I, \quad p_{t+1} = \Tr(A)p_t - p_{t-1}, \text{ where}\quad p_t=\frac{\lambda^t-\lambda^{-t}}{\lambda - \lambda^{-1}}\in\mathbb Z.
\end{equation}
Set $T_N =\min \{t : A^t =I \mod N \}$ and, for $k \in \N$, define
$$
N'_k\coloneqq \max \{N : A^k =I \mod N\}.
$$

Essential to our proof of Theorem \ref{thm:lowerbound} is the following statement from \cite{Bonechi-DeBievre2000_Article_ExponentialMixingAndTimeScales}. For the reader's convenience, we replicate their proof here. 
\begin{theorem}[\cite{Bonechi-DeBievre2000_Article_ExponentialMixingAndTimeScales}, Prop. 11]\label{thm:T_n N_k}
For each $k \in \N$, we have 
$N'_{2k}=2p_k$, $N'_{2k+1} =p_k + p_{k+1}$, and $T_{N'_k}=k$.
\end{theorem}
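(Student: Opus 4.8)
The plan is to reduce the whole statement to an elementary computation with the sequence $(p_t)$. First I would note that, by~\eqref{eq:pdef}, $A^k-I=p_kA-(p_{k-1}+1)I$, whose off-diagonal entries are $p_kb$ and $p_kc$ and whose diagonal entries are $p_ka-(p_{k-1}+1)$, $p_kd-(p_{k-1}+1)$. Since $\gcd(b,c)=1$, an integer $N$ divides all four of these precisely when $N\mid p_k$ and $N\mid p_{k-1}+1$; hence
\begin{equation*}
N'_k=\gcd\bigl(p_k,\,p_{k-1}+1\bigr),\qquad\text{so}\qquad\{N:A^k\equiv I\bmod N\}=\{N:N\mid N'_k\}.
\end{equation*}
In particular $A^k\equiv I\bmod N'_k$, and since $\det A=1$ makes $A$ invertible modulo $N$, the set $\{t\ge 1:A^t\equiv I\bmod N\}$ is exactly the set of positive multiples of $T_N$; thus $T_{N'_k}\mid k$. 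So it remains to evaluate the gcd above (for $k$ even and $k$ odd) and then to exclude proper divisors of $k$.

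For the gcd computation I would first record the standard identities for $(p_t)$, all immediate from the closed form in~\eqref{eq:pdef}: the addition law $p_{a+b}=p_ap_{b+1}-p_{a-1}p_b$, which gives $p_{2k}=p_k(p_{k+1}-p_{k-1})$, $p_{2k+1}=p_{k+1}^2-p_k^2$, $p_{2k-1}=p_k^2-p_{k-1}^2$; the Cassini identity $p_k^2-p_{k-1}p_{k+1}=1$, which via the recurrence equals $p_k^2-(\Tr A)p_kp_{k-1}+p_{k-1}^2=1$ (this is just $\det A^k=1$) and yields $\gcd(p_k,p_{k-1})=1$ as well as $p_{k-1}^2\equiv 1\bmod p_k$; and the parity fact that $p_t$ is even iff $t$ is even (since $p_{t+1}\equiv p_{t-1}\bmod 2$, using that $\Tr A$ is even). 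I would also set $\tau:=\Tr A$ and $W_k:=\tfrac{\tau}{2}p_k-p_{k-1}$, which is an \emph{integer because $\tau$ is even}; note $2W_k=\tau p_k-2p_{k-1}=p_{k+1}-p_{k-1}$, so $p_{2k}=2p_kW_k$.

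Next, the odd case: $N'_{2k+1}=\gcd\bigl(p_{2k+1},\,p_{2k}+1\bigr)$ with $p_{2k+1}=(p_{k+1}-p_k)(p_{k+1}+p_k)$. As $p_k,p_{k+1}$ have opposite parity, both factors are odd and their gcd divides $\gcd(2p_k,2p_{k+1})=2$, hence equals $1$. Reducing $p_{2k}+1$ modulo $p_{k+1}+p_k$ (using $p_{k+1}\equiv-p_k$, $p_{k-1}\equiv(\tau+1)p_k$, and the determinant identity at index $k+1$) gives $p_{2k}+1\equiv 0$, while the same reduction modulo the odd number $p_{k+1}-p_k$ gives $p_{2k}+1\equiv 2$ and hence $\gcd(p_{k+1}-p_k,p_{2k}+1)=1$; by multiplicativity of gcd over this coprime factorization, $N'_{2k+1}=p_{k+1}+p_k$. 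For the even case, $N'_{2k}=\gcd\bigl(p_{2k},\,p_{2k-1}+1\bigr)$; from Cassini, $p_{2k-1}+1=p_k^2-p_{k-1}^2+1=2+p_{k-1}(p_{k+1}-p_{k-1})=2(1+p_{k-1}W_k)$, and $p_{2k}=2p_kW_k$, so $N'_{2k}=2\gcd\bigl(p_kW_k,\,1+p_{k-1}W_k\bigr)$. Here $\gcd(W_k,1+p_{k-1}W_k)=1$ trivially, while $p_{k-1}W_k=\tfrac{\tau}{2}p_kp_{k-1}-p_{k-1}^2\equiv-p_{k-1}^2\equiv-1\bmod p_k$, so $p_k\mid 1+p_{k-1}W_k$; hence $\gcd(p_kW_k,1+p_{k-1}W_k)=p_k$ and $N'_{2k}=2p_k$.

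Finally, for the period statement: strict monotonicity of $(p_t)_{t\ge0}$ follows at once from $\Tr A\ge 4$ and~\eqref{eq:pdef}, and then the two formulas just proved give $N'_{2m}=2p_m<p_m+p_{m+1}=N'_{2m+1}<2p_{m+1}=N'_{2m+2}$, so $(N'_m)_{m\ge1}$ is strictly increasing. If $T_{N'_k}=k'<k$, then $A^{k'}\equiv I\bmod N'_k$, so $N'_k\le N'_{k'}$ by the maximality in the definition of $N'_{k'}$, contradicting $N'_{k'}<N'_k$; therefore $T_{N'_k}=k$. The one genuinely delicate point is the even case: whether the stray factor of $2$ is entirely absorbed into the gcd is exactly where $\Tr A$ even is used, and the clean route is to carry the integer $W_k=\tfrac{\tau}{2}p_k-p_{k-1}$ instead of $p_{k+1}-p_{k-1}$, which makes the congruence $p_{k-1}W_k\equiv-1\bmod p_k$ available with no $2$-adic bookkeeping.
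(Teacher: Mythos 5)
Your proof is correct, and it shares its endpoints with the paper's argument: both reduce $N'_k$ to $\gcd(p_k,\,p_{k-1}+1)$ using $\gcd(b,c)=1$, and both finish the period claim by combining $A^k\equiv I\bmod N'_k$ with monotonicity of $(N'_m)$ and the maximality in the definition of $N'_{k'}$ (you actually prove the strict monotonicity from the closed formulas, which the paper asserts without proof). The central gcd evaluation, however, is done by a genuinely different mechanism. The paper runs a descending induction $N'_k=\gcd(p_{k-s}+p_s,\;p_{k-(s+1)}+p_{s+1})$ for $s=0,\dots,k-1$, using only the recurrence and $\gcd(a,ca-b)=\gcd(a,b)$, and reads off both parities by stopping at the middle index $s=\lfloor k/2\rfloor$. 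You instead factor $p_{2k+1}=(p_{k+1}-p_k)(p_{k+1}+p_k)$ and $p_{2k}=2p_kW_k$ and evaluate the relevant residues with the Cassini identity $p_k^2-(\Tr A)\,p_kp_{k-1}+p_{k-1}^2=1$. The paper's ladder is shorter and treats both parities uniformly in one induction; your route needs more identities plus multiplicativity of $\gcd$ over a coprime factorization, but it isolates exactly where the hypothesis that $\Tr A$ is even enters (the parity of the $p_t$ and the integrality of $W_k$), and it yields the sharper statements $\{N:A^k\equiv I\bmod N\}=\{N:N\mid N'_k\}$ and $T_{N'_k}\mid k$ as byproducts. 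All the identities you invoke check out, and the terse step $\gcd(p_kW_k,\,1+p_{k-1}W_k)=p_k$ is indeed justified by $\gcd(W_k,\,1+p_{k-1}W_k)=1$ together with $p_k\mid 1+p_{k-1}W_k$.
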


\begin{proof}
Using equation (\ref{eq:pdef}), we see that $N'_k$ is the greatest integer such that 
$$\begin{bmatrix} p_k a - p_{k-1} -1 & p_k b \\ p_k c & p_k d -p_{k-1} -1 \end{bmatrix} = \begin{bmatrix} 0 & 0 \\ 0 & 0 \end{bmatrix} \mod N'_k.$$
Recall that we assumed $b$ and $c$ are coprime. Therefore, $p_k=0 \mod N'_k$ and $p_{k-1} =-1 \mod N'_k$, which gives $N'_k = \gcd(p_k, p_{k-1} +1)$. We claim that for $s=0, \ldots, k-1$, 
\begin{equation} \label{eq:N_kdef}
N'_k = \gcd(p_{k-s} +p_s, p_{k-(s+1)} +p_{s+1}).
\end{equation}
We proceed by induction. 
Note that $p_0=0$ and $p_1=1$, therefore, (\ref{eq:N_kdef}) clearly holds for $s=0$. Now suppose we know (\ref{eq:N_kdef}) for some $s \geq 0$. Using (\ref{eq:pdef}) and the identity $\gcd(a, ca-b) =\gcd(a, b)$, we have
\begin{align*}
N'_k &= \gcd(\Tr A p_{k-s-1} -p_{k-s-2} + p_s, \text{ } p_{k-s-1} +p_{s+1})\\
&=\gcd(\Tr A (p_{k-s-1} + p_{s+1}) - \Tr A p_{s+1} - p_{k-s-2} +p_s, \text{ }  p_{k-s-1} + p_{s+1})\\
&=\gcd(p_{s+2} + p_{k-(s+2)}, \text{ }  p_{k -(s+1)} +p_{s+1}), 
\end{align*}
which completes the induction. We set $k=2 \ell$, $s=\ell$ in (\ref{eq:N_kdef}) to conclude
$$N'_{2 \ell} =\gcd(2 p_\ell, p_{\ell -1} + p_{\ell +1}) = \gcd(2 p_{\ell}, \Tr A p_{\ell}) = 2 p_\ell,$$
where the last equality follows from our assumption that $\Tr A$ is even. Similarly, setting $k = 2 \ell +1$ and $s=\ell$ in (\ref{eq:N_kdef}) gives $N'_{2\ell +1}=p_\ell + p_{\ell +1}$.

Now note that for each $k$, we have
\begin{equation}\label{eq:N_k1}
A^k=1 \mod N'_k,
\end{equation}
\begin{equation}\label{eq:N_k2}
A^{T_{N'_k}} =1 \mod N'_k, \quad \text{and} \quad A^{T_{N'_k}} =1 \mod N'_{T_{N'_k}}.
\end{equation}
From the definition of $T_{N'_k}$ and (\ref{eq:N_k1}), we see that $T_{N'_k} \leq k$. From the definition of $N'_k$ and (\ref{eq:N_k2}), we see that $N'_k \leq N'_{T_{N'_k}}$. As $\{N'_k\}$ is increasing, we conclude that $T_{N'_k} \geq k$. Therefore, $T_{N'_k}=k$. 
\end{proof}

Now, let $n(N)$ denote the period of $M_{N, 0}$; specifically
$$
n(N) \coloneqq \min \{t : M_{N, 0}^t = e^{i \phi} \text{ for some } \phi \in \R\}.
$$
Suppose $A_N$ is the matrix with integer entries that satisfies $A^{T_N}=1+N A_N$. From \cite{HB1980}*{(36)-(46)}, we know that $n(N)=T_N$ if $N$ is odd or if $N$ is even and $(A_N)_{12}$ and $(A_N)_{21}$ are even. Otherwise, $n(N) =2T_N$.

Using this formula for $n(N)$ and Theorem  \ref{thm:T_n N_k}, following \cite{Bonechi-DeBievre2000_Article_ExponentialMixingAndTimeScales}, we show an upper bound for $n(N)$ that depends only on $\lambda$ and $N$.

\begin{figure}%
    \centering
    \subfloat[\centering $N=991$ ]{{\includegraphics[width=11cm]{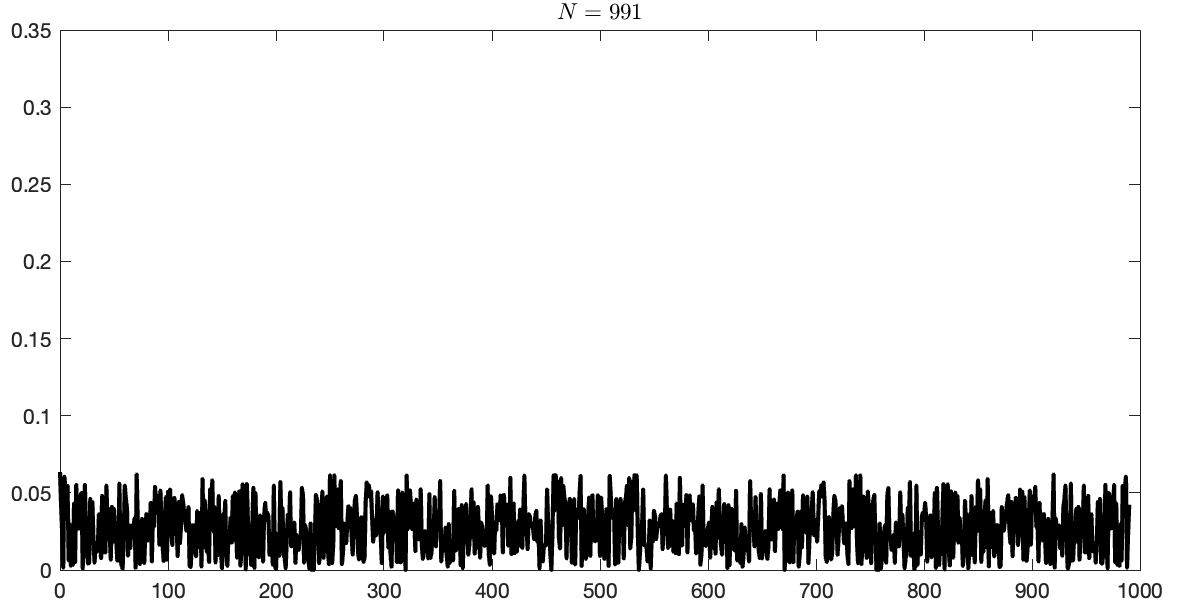} }}%
    \linebreak
    \subfloat[\centering  $N=2911$]{{\includegraphics[width=11cm]{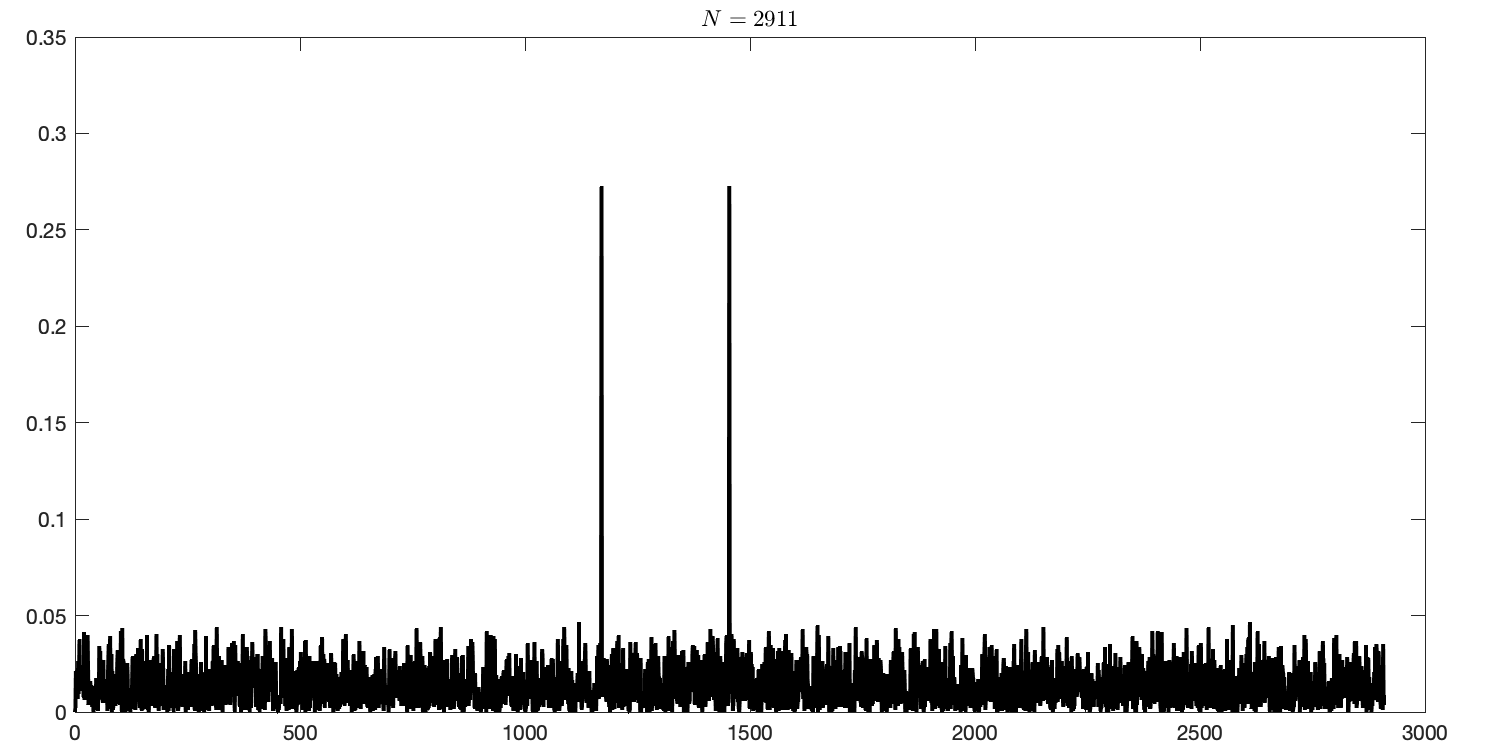} }}%
    \caption{The plots of a maximal $\ell^\infty$-norm, $\ell^2$-normalized eigenfunction of $M_{N,0}$, where $M_{N,0}$ corresponds to $A=\begin{bmatrix}
         2 & 3\\1 & 2
    \end{bmatrix}$. Specifically, each plot point corresponds to the absolute value of the $i$th coordinate of the eigenfunction for $0 \leq i \leq N-1$.
    Note that $N=2911$ is an element of the sequence $N_k$ in Corollary~\ref{lem:quantumperiod}, while $N=991$ is not.}%
    \label{fig:cateigf}%
\end{figure}

\begin{corollary}\label{lem:quantumperiod}
Let $\lambda$ be the largest eigenvalue of $A$. Then, there exists a sequence of odd $N_k$ such that $2 \log_\lambda  N_k +1 \geq n(N_k)$.
\end{corollary}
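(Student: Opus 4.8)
The plan is to exhibit an explicit sequence of odd integers $N_k$ as values of $N'_m$ for suitable $m$, and then estimate the quantum period $n(N_k)$ from above using the formula $n(N) \le 2T_N$ together with $T_{N'_m} = m$ from Theorem~\ref{thm:T_n N_k}. Concretely, first I would pick $N_k := N'_{2k+1} = p_k + p_{k+1}$. Since $N'_{2k}=2p_k$ is always even while $N'_{2k+1}=p_k+p_{k+1}$ need not be, I need to check that infinitely many of the $N'_{2k+1}$ are odd; this follows by looking at the recurrence $p_{t+1}=\Tr(A)\,p_t-p_{t-1}$ modulo $2$: since $\Tr A$ is even, $p_{t+1}\equiv -p_{t-1}\equiv p_{t-1}\pmod 2$, so the parities of $p_t$ are eventually periodic with $p_0=0$, $p_1=1$ forcing $p_k+p_{k+1}$ to be odd for every $k$ (the sequence of parities of $(p_k,p_{k+1})$ is $(0,1),(1,0),(0,1),\dots$ — wait, one must verify; in any case the parity pattern is explicit and one extracts an infinite subsequence of odd values). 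This handles the requirement that $N_k$ be odd, and it also ensures $n(N_k)=T_{N_k}$ by the parity clause in the Hannay–Berry formula, so in fact $n(N_k) = T_{N'_{2k+1}} = 2k+1$.

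Next I would bound $2k+1$ in terms of $N_k$. From \eqref{eq:pdef} we have $p_t=\frac{\lambda^t-\lambda^{-t}}{\lambda-\lambda^{-1}}$, so $N_k = p_k+p_{k+1} = \frac{(\lambda^k - \lambda^{-k}) + (\lambda^{k+1}-\lambda^{-(k+1)})}{\lambda-\lambda^{-1}} = \frac{\lambda^{k+1}+\lambda^k - \lambda^{-k}-\lambda^{-k-1}}{\lambda-\lambda^{-1}}$. Since $\lambda>1$ this is at least $\frac{\lambda^{k+1}+\lambda^k - 2}{\lambda - \lambda^{-1}}$, and a crude lower bound $N_k \ge \lambda^k$ is easy to establish for all $k$ (for instance $p_k+p_{k+1}\ge \lambda^k$ follows by induction from the recurrence, or directly from the closed form, using $\lambda\ge 2+\sqrt3$ or even just $\lambda>1$ and $\Tr A\ge 4$). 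Hence $k \le \log_\lambda N_k$, giving $n(N_k) = 2k+1 \le 2\log_\lambda N_k + 1$, which is exactly the claimed inequality.

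The main obstacle — really the only subtle point — is the parity/oddness verification: one must confirm both that $N_k=N'_{2k+1}$ is odd (so that it is genuinely a new sequence distinct from the even $N'_{2k}$, matching the ``odd $N$'' hypothesis of Theorem~\ref{thm:lowerbound}) and that the exceptional doubling $n(N)=2T_N$ does not occur, which for odd $N$ it never does by the cited result of Hannay–Berry \cite{HB1980}. Everything else is a direct substitution of the closed forms from \eqref{eq:pdef} and the identity $T_{N'_k}=k$ from Theorem~\ref{thm:T_n N_k}. I would also remark that one could equally use $N_k' = 2p_k$ for a sequence of even $N$, but since Theorem~\ref{thm:lowerbound} is stated for odd $N$ we commit to $N_k = p_k + p_{k+1}$ and record that these are odd.
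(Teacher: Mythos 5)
Your proposal is correct and follows essentially the same route as the paper: take $N_k = N'_{2k+1} = p_k + p_{k+1}$, use the even-trace recurrence mod $2$ to see these are odd (the parities of $p_t$ alternate $0,1,0,1,\dots$, so the sum of consecutive terms is always odd — your hedge there resolves affirmatively), invoke $n(N)=T_N$ for odd $N$ and $T_{N'_{2k+1}}=2k+1$, and bound $N_k \ge \lambda^k$ from the closed form to get $n(N_k)=2k+1\le 2\log_\lambda N_k+1$.
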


\begin{proof}
Using  our assumption that $\Tr A$ is even, we note that $p_{2k}$ is even and $p_{2k+1}$ is odd. Thus, $n(N'_{2k+1}) =T_{N'_{2k+1}}=2k+1$.  

We now formulate a bound for $n(N'_{2k+1})$ in terms of $N'_{2k+1}$. As $\Tr A >2$, we know $\lambda >1$. Additionally using Theorem \ref{thm:T_n N_k}, $N'_{2k+1} = \frac{\lambda^{k+1} + \lambda^k - \lambda^{-k-1} - \lambda^{-k}}{\lambda - \lambda^{-1}} \geq \lambda^{k}$. Therefore,  $\log_\lambda N'_{2k+1} \geq k $, which gives $2 \log_\lambda  (N'_{2k+1}) +1 \geq 2k +1 =n(N'_{2k+1})$. Labeling, $N'_{2k+1}$ as $N_{k}$, we are done.
\end{proof}

Note that the same proof ideas can be adapted for a sequence of even $N$, using the fact that $N'_{2k}$ is even. However, as the odd case is the simpler of the two, our main theorem is proven for a sequence of odd $N$.

For ease of notation,  again using $N_k \coloneqq N'_{2k+1}$, set
$$
t_k \coloneqq n(N_{k}).
$$
Therefore,   if $u$ is an eigenfunction of $M_{N_k,0}$ with eigenvalue $\lambda$,  $M_{N_{k},0}^{t_k} u=e^{i \phi_k}u=\lambda^{t_k}u$. Thus, each eigenvalue of $M_{N_k, 0}$ satisfies $\lambda^{t_k}=e^{i \phi_k}$, giving an eigenvalue whose multiplicity is at least  $\frac{N_k}{t_k}$. In other words, $M_{N_{k},0}$ has a degenerate eigenspace with dimension at least $\frac{N_{k}}{t_k}.$

\subsection{Lower Bound}\label{lowerboundsubsection2}
Now we are ready to state the lower bound in linear algebraic terms.

\begin{proposition}\label{prop:lowerbound}
Suppose $V$ is subspace of $\C^N$ with dimension at least $\frac{N}{t}$. Then,
\begin{align}
\frac{1}{\sqrt{t}}\leq\max_{u\in V,\ \|u\|_{\ell^2}=1}\|u\|_{\ell^\infty}.
\end{align}
\end{proposition}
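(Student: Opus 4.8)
The plan is to exploit the averaging/trace identity that relates the pointwise mass of vectors in $V$ to the dimension of $V$. Concretely, let $P$ be the orthogonal projection of $\mathbb C^N$ onto $V$, so $\operatorname{Tr} P = \dim V \geq N/t$. For each coordinate index $j \in \{0,\dots,N-1\}$, let $\delta_j$ denote the standard basis vector. Then $\langle P\delta_j,\delta_j\rangle = \|P\delta_j\|_{\ell^2}^2 = (P)_{jj} \geq 0$, and $\sum_{j=0}^{N-1}(P)_{jj} = \operatorname{Tr} P \geq N/t$. Hence by averaging there exists an index $j$ with $(P)_{jj} \geq \frac1t$.

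Next I would turn this into a lower bound for $\max_{u\in V,\ \|u\|_{\ell^2}=1}\|u\|_{\ell^\infty}$. Fix the index $j$ produced above, and set $u := P\delta_j / \|P\delta_j\|_{\ell^2}$, which is a well-defined unit vector in $V$ since $\|P\delta_j\|_{\ell^2}^2 = (P)_{jj}\geq \frac1t>0$. Its $j$-th coordinate is
\begin{align}
|u_j| = \frac{|\langle P\delta_j,\delta_j\rangle|}{\|P\delta_j\|_{\ell^2}} = \frac{(P)_{jj}}{\sqrt{(P)_{jj}}} = \sqrt{(P)_{jj}} \geq \frac{1}{\sqrt t},
\end{align}
using that $P$ is self-adjoint and idempotent so $\langle P\delta_j,\delta_j\rangle = \langle P\delta_j,P\delta_j\rangle = \|P\delta_j\|_{\ell^2}^2$. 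Therefore $\|u\|_{\ell^\infty} \geq |u_j| \geq \frac{1}{\sqrt t}$, which gives the claimed inequality.

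There is essentially no serious obstacle here — the whole argument is the pigeonhole principle applied to the diagonal of a projection matrix, together with the observation that $P\delta_j$ is a near-extremal test vector. The only points requiring a word of care are: (i) handling the degenerate possibility $P\delta_j = 0$, which cannot occur for the chosen $j$ precisely because $(P)_{jj}\geq 1/t>0$; and (ii) making sure we only need $\dim V \geq N/t$ rather than equality, which is fine since the trace bound is an inequality in the right direction. One could alternatively phrase the proof without projections by picking any orthonormal basis $\{v_1,\dots,v_m\}$ of $V$ with $m\geq N/t$ and noting $\sum_j \sum_{i=1}^m |(v_i)_j|^2 = m \geq N/t$, so some coordinate $j$ has $\sum_i |(v_i)_j|^2 \geq 1/t$, and then taking $u = \sum_i \overline{(v_i)_j}\, v_i$ normalized; this is the same computation in coordinates. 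I would present the projection version as it is cleanest.
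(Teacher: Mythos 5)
Your proposal is correct and follows essentially the same route as the paper: both use the orthogonal projection onto $V$, the trace identity $\sum_j \|\Pi e_j\|_{\ell^2}^2 = \dim V \geq N/t$, and pigeonhole to find a coordinate $j$ with $\|\Pi e_j\|_{\ell^2}^2 \geq 1/t$. The only cosmetic difference is that you exhibit the extremizer $\Pi e_j/\|\Pi e_j\|_{\ell^2}$ explicitly, while the paper reaches the same conclusion via the duality $\sup_{\|u\|_{\ell^2}=1}|\langle u,\Pi e_j\rangle| = \|\Pi e_j\|_{\ell^2}$.
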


\begin{proof}
Define $\Pi: \C^N\to V$ to be the orthogonal projection onto $V$ and let $e_j$ denote the $j$th coordinate vector. Note that
$$
\sum_{j=1}^N \|\Pi e_j\|_{\ell^2}^2=\Tr(\Pi^*\Pi)=
\Tr \Pi = \dim V \geq \frac{N}{t}.
$$
Therefore, there exists a $j$ such that $\|\Pi e_j\|_{\ell^2} \geq \frac{1}{\sqrt{t}}$.
We then have
$$\sup_{\substack{u \in V\\ \|u\|_{\ell^2}=1}}\|u\|_{\ell^\infty} =\max_j \sup_{\substack{u \in \C^N \\ \|u\|_{\ell^2}=1}} | \lrang{\Pi u, e_j} |=\max_j \sup_{\substack{u \in \C^N \\ \|u\|_{\ell^2}=1}} | \lrang{u, \Pi e_j} |=\max_j \|\Pi e_j\|_{\ell^2} \geq \frac{1}{\sqrt{t}},$$
which completes the proof.
\end{proof}

We claim that Proposition \ref{prop:lowerbound} implies Theorem \ref{thm:lowerbound}. In particular, fixing $V$ to be the degenerate eigenspace of $M_{N_k, 0}$ with dimension at least $\frac{N_k}{t_k}$ and using the fact that  $\C^{N_k} \simeq \cH_{N_k}(0)$, Proposition \ref{prop:lowerbound} implies
\begin{align}
   \frac{1}{\sqrt{2 \log_\lambda   N_{k}+1}} \leq  \frac{1}{\sqrt{t_k}} \leq\max_{u\in V,\ \|u\|_{\ell^2}=1}\|u\|_{\ell^\infty}.
\end{align}

For an explicit demonstration of this lower bound, see Figure \ref{fig:cateigf}. 

\subsection{Eigenfunctions in Proposition \ref{prop:lowerbound}}
We further examine the eigenspace $V$, which achieved the lower bound in Proposition \ref{prop:lowerbound}.
Recall we defined $\Pi$ to be the orthogonal projection onto $V$ and examined eigenfunctions of the form $\Pi (\sum_{j=0}^{N-1} \alpha_j e_j^0)$. To find a more precise formula for these eigenfunctions, we deduce a formula for $\Pi$.

Suppose $V$ corresponds to eigenvalue $\lambda$.
We know that the spectrum of $M_{N_k, 0}$ is contained in $\{z : z^{t_k} =e^{i \phi_k}\}$. Therefore, for some $s \in \Z$, $\lambda$ is of the form $e^{i \frac{\phi_k + 2\pi s}{t_k}}$. Thus, on the spectrum,
$$\frac{1}{t_k}\left(1 + \frac{z}{\lambda} + \cdots + \frac{z^{t_k-1}}{\lambda^{t_k-1}} \right) =\begin{cases}  1, & \text{ if } z = \lambda \\ 0, & \text{ else }\end{cases}.$$

We then see $$\Pi = \frac{1}{t_k} \left(I + e^{-i \frac{\phi_k + 2\pi s}{t_k}}M +  \cdots + \left(e^{-i \frac{\phi_k + 2\pi s}{t_k}}M\right)^{t_k-1} \right).$$

Therefore in our proof of the lower bound, we are actually studying the eigenfunctions of the form 
\begin{equation}\label{eq:delta}
\frac{1}{t_k} \sum_{t=0}^{t_k-1} e^{-i \frac{\phi_k + 2\pi s}{t_k}t}M^t e^0_j.
\end{equation}

\begin{figure}
   \centering
\begin{tabular}{lccccc}
& $W_{M^j g}(x)$ & $W_{M^je^0_{N/2}}(x)$ & & $W_{M^j g}(x)$ & $W_{M^je^0_{N/2}}(x)$\\
$j=0$&
\includegraphics[width=2.5cm]{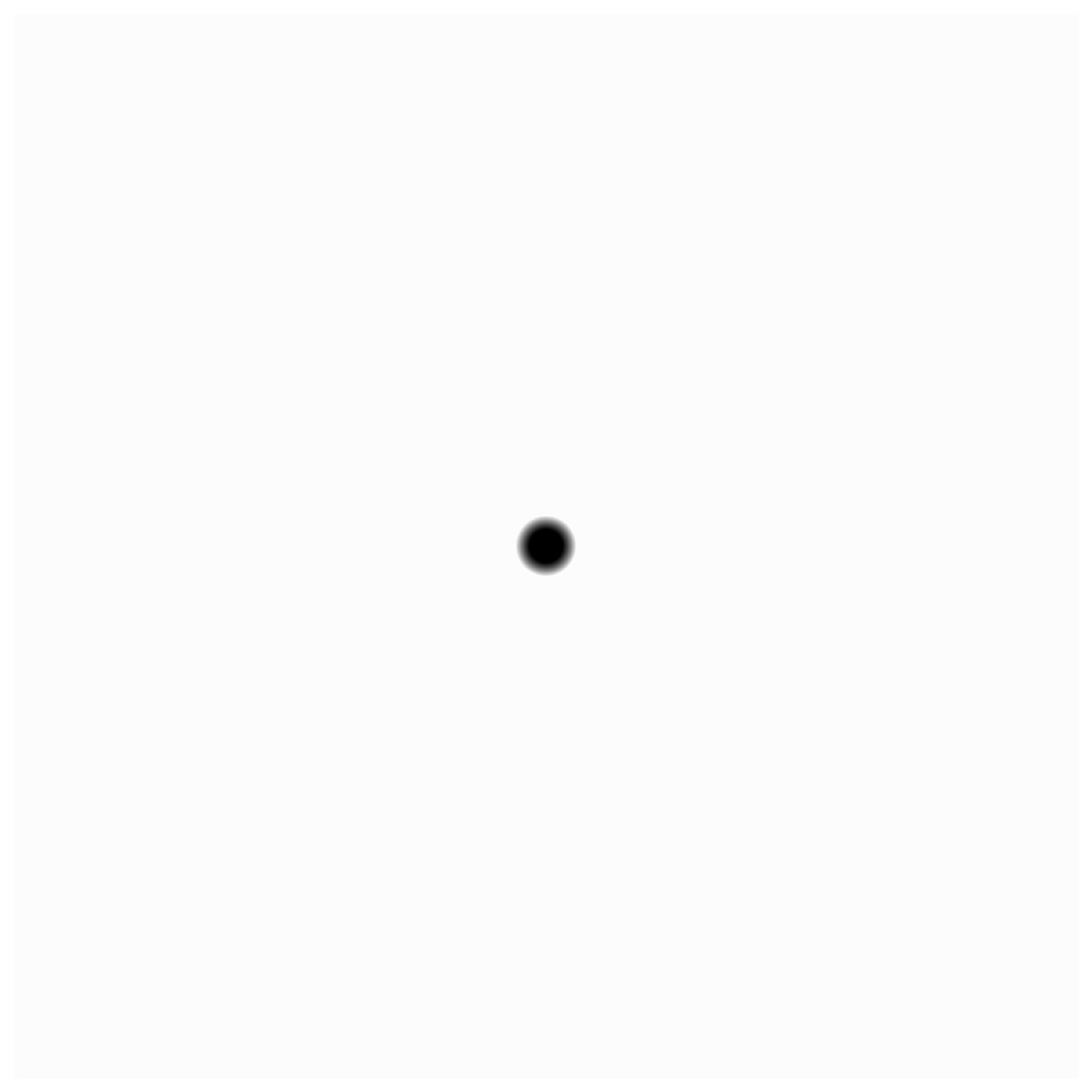}&
\includegraphics[width=2.5cm]{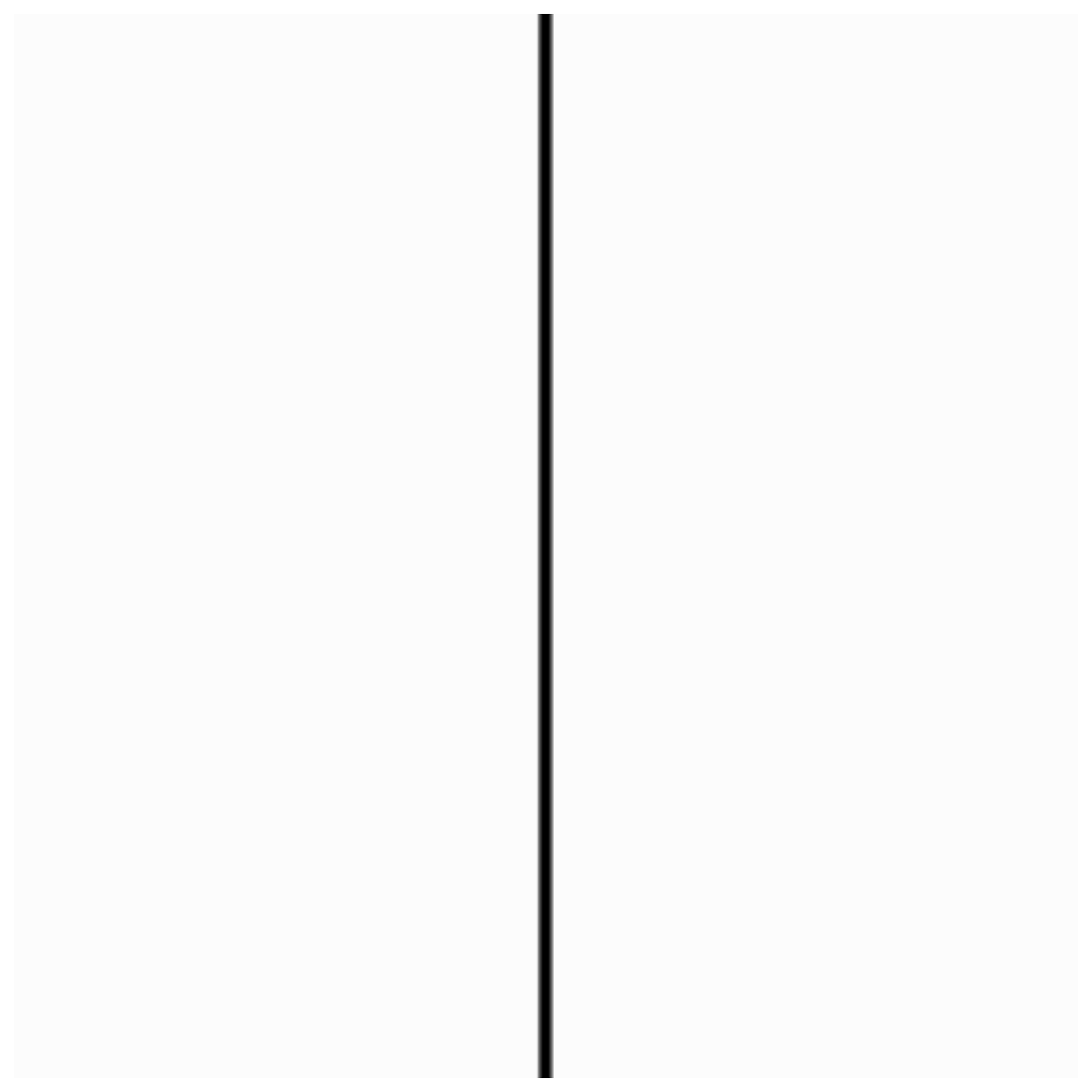} & $j=7$&
\includegraphics[width=2.5cm]{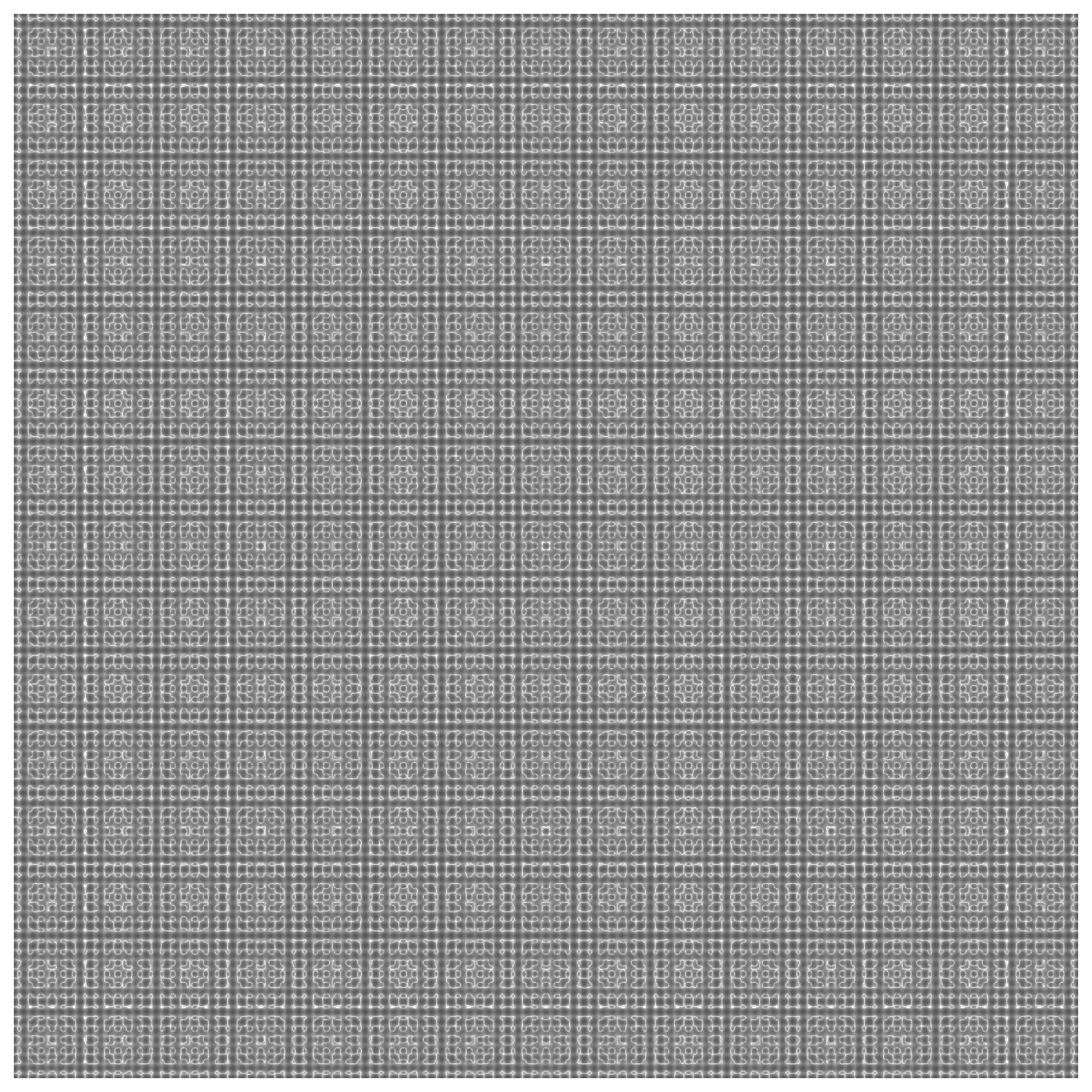}&
\includegraphics[width=2.5cm]{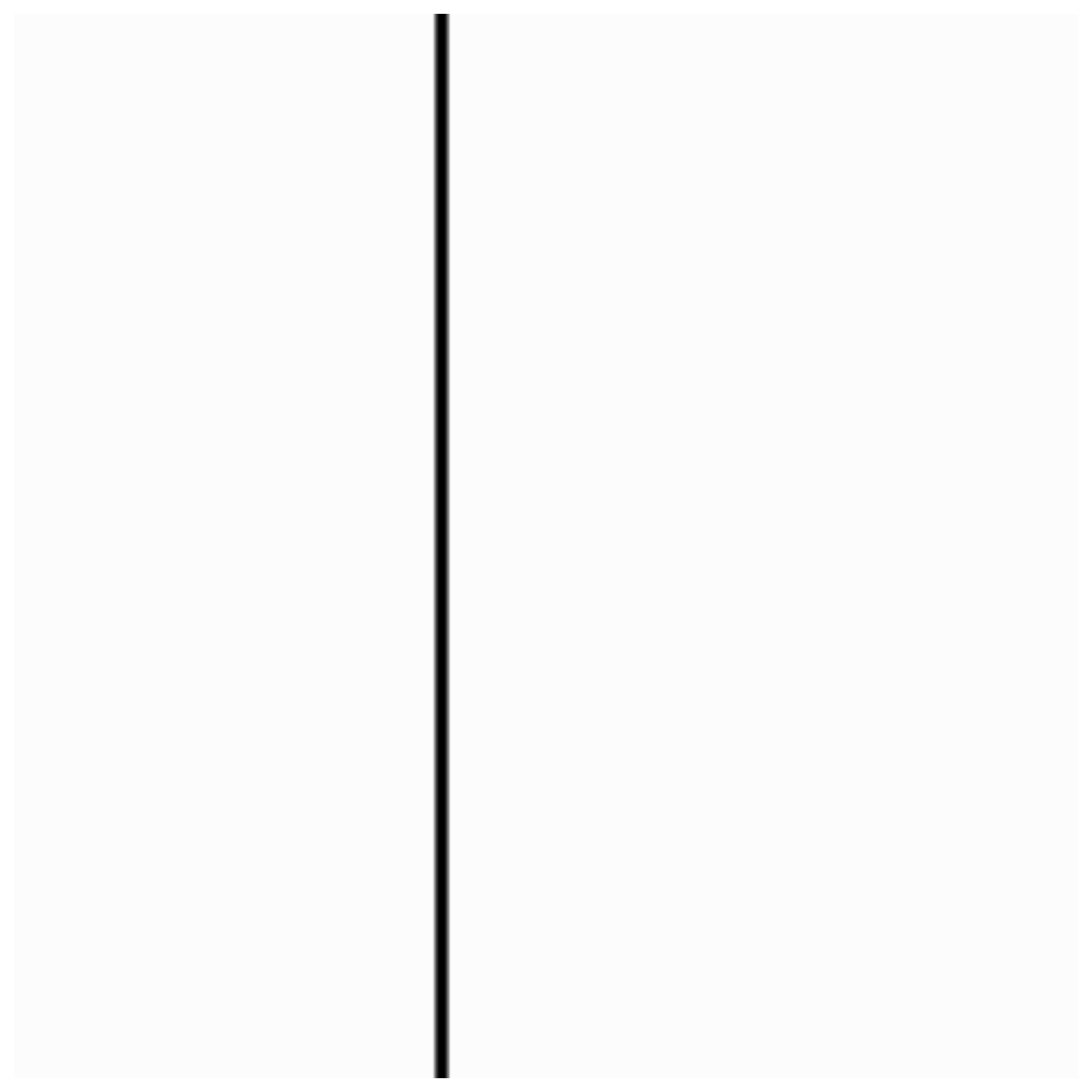} \\
$j=1$&
\includegraphics[width=2.5cm]{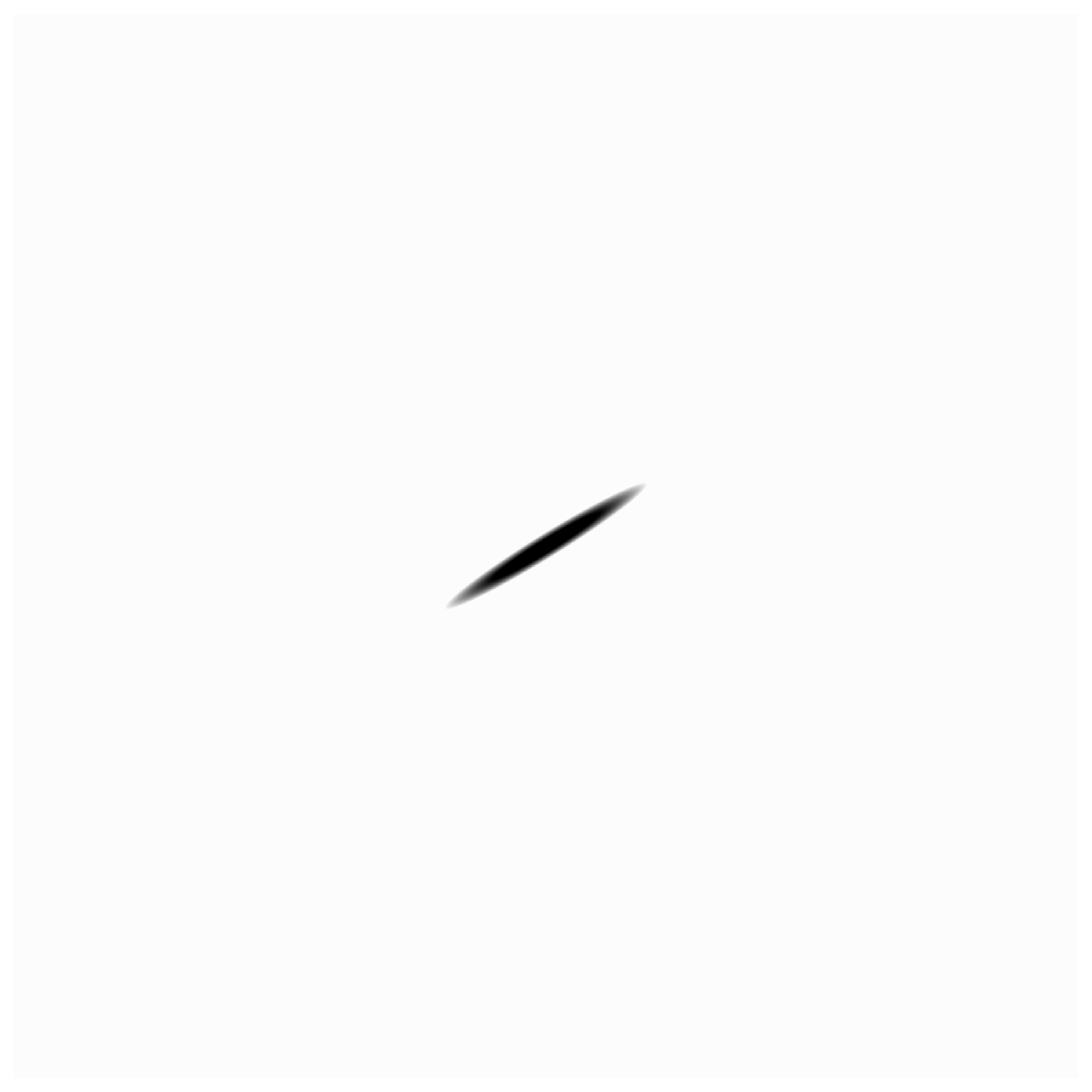}&
\includegraphics[width=2.5cm]{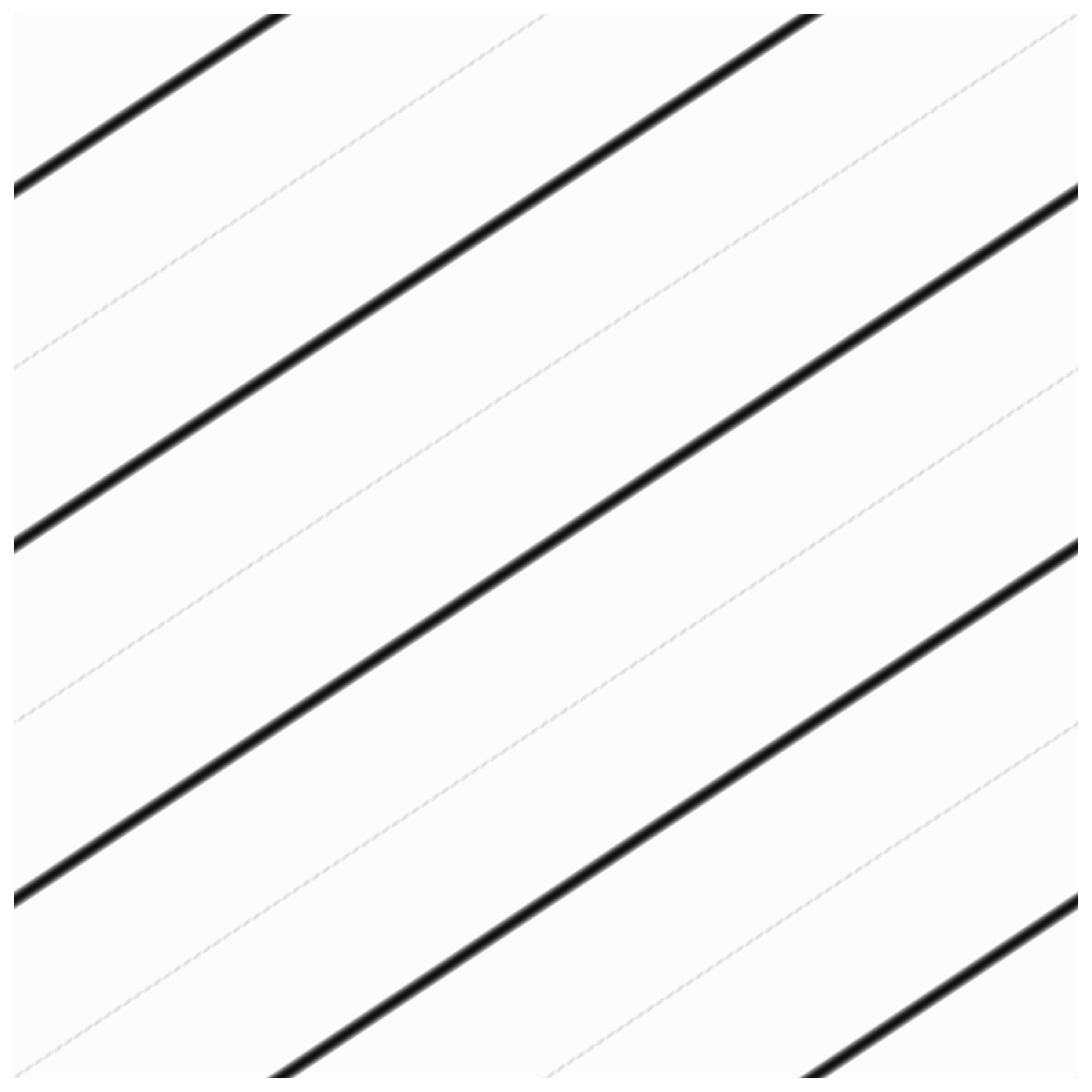} & $j=8$&
\includegraphics[width=2.5cm]{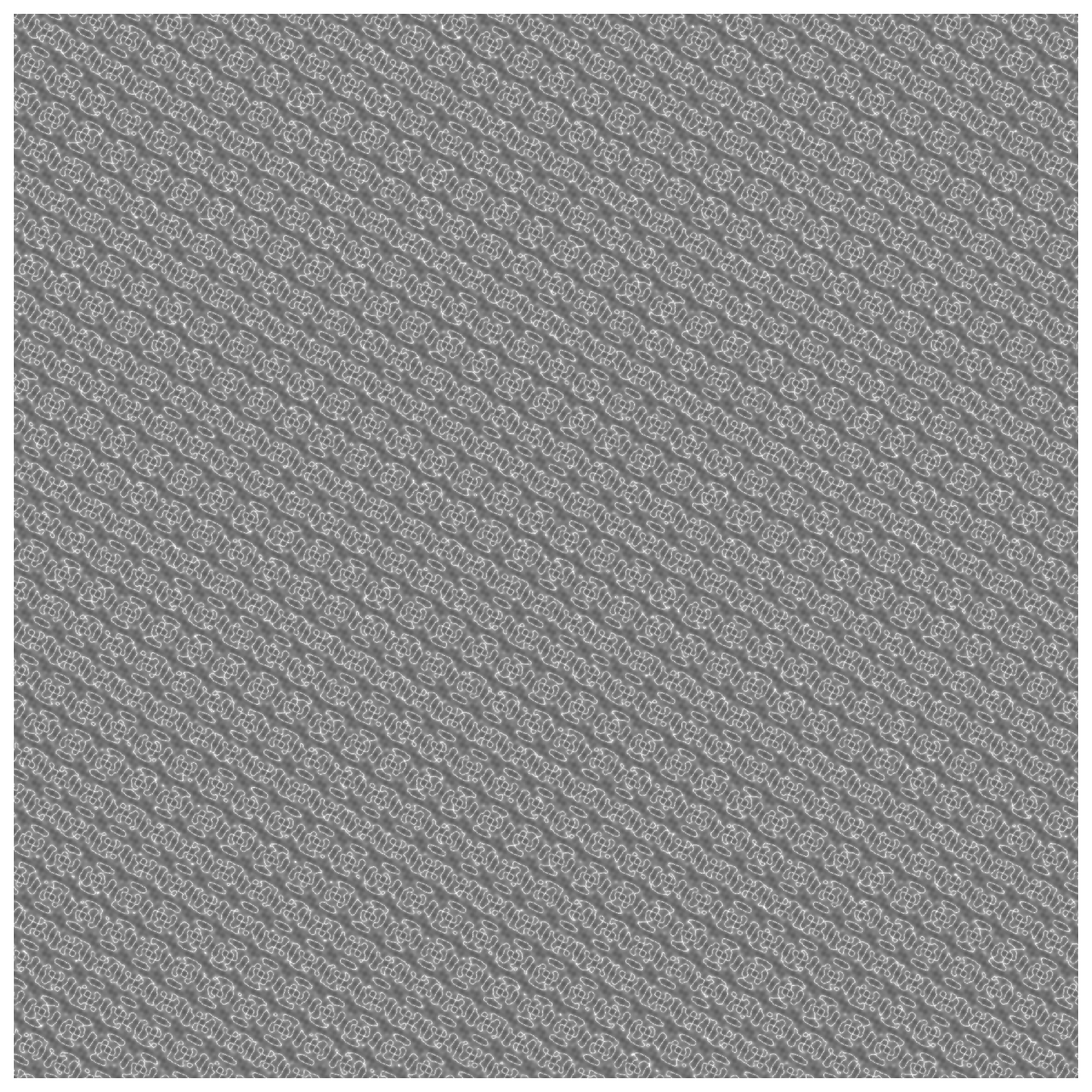}&
\includegraphics[width=2.5cm]{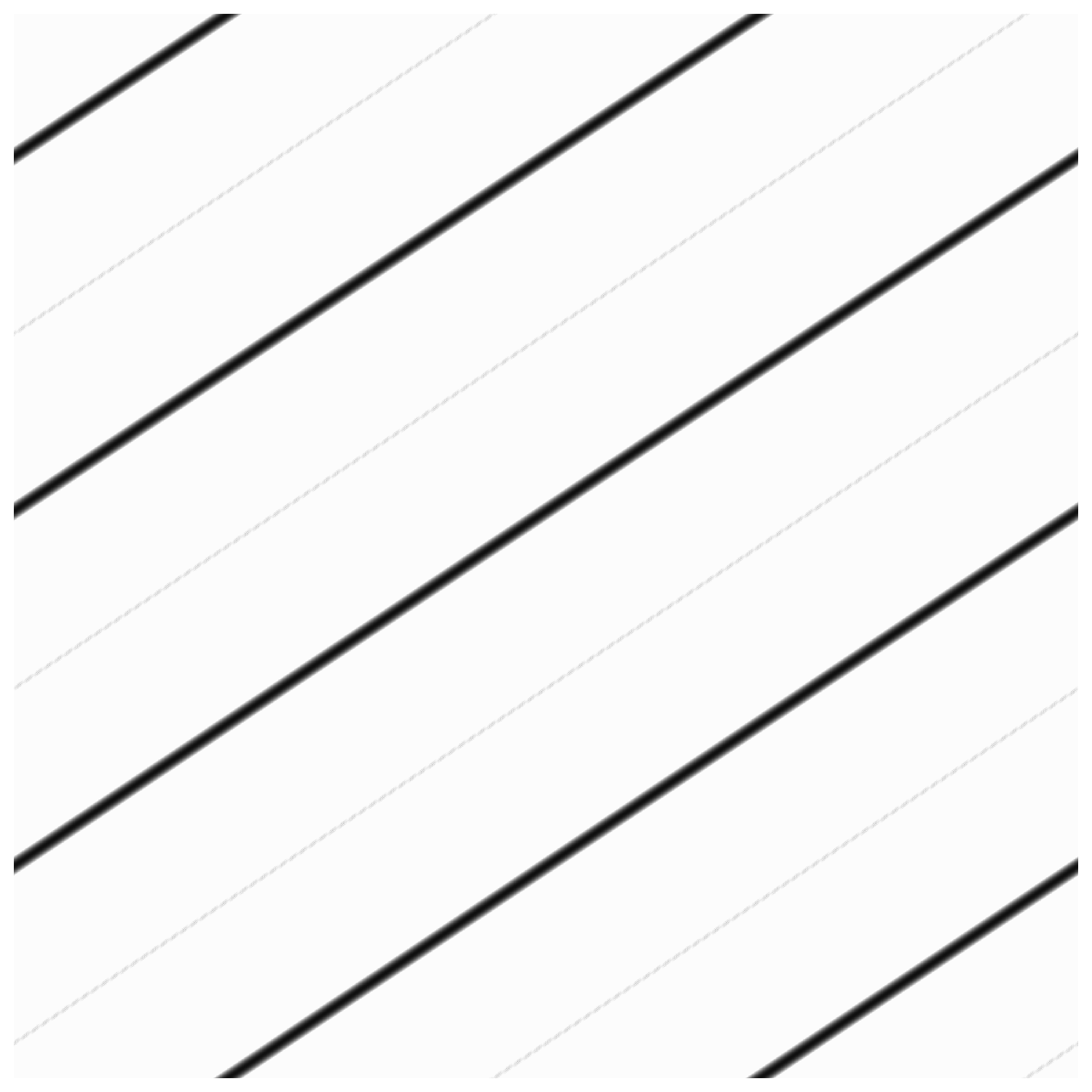} \\
$j=2$&
\includegraphics[width=2.5cm]{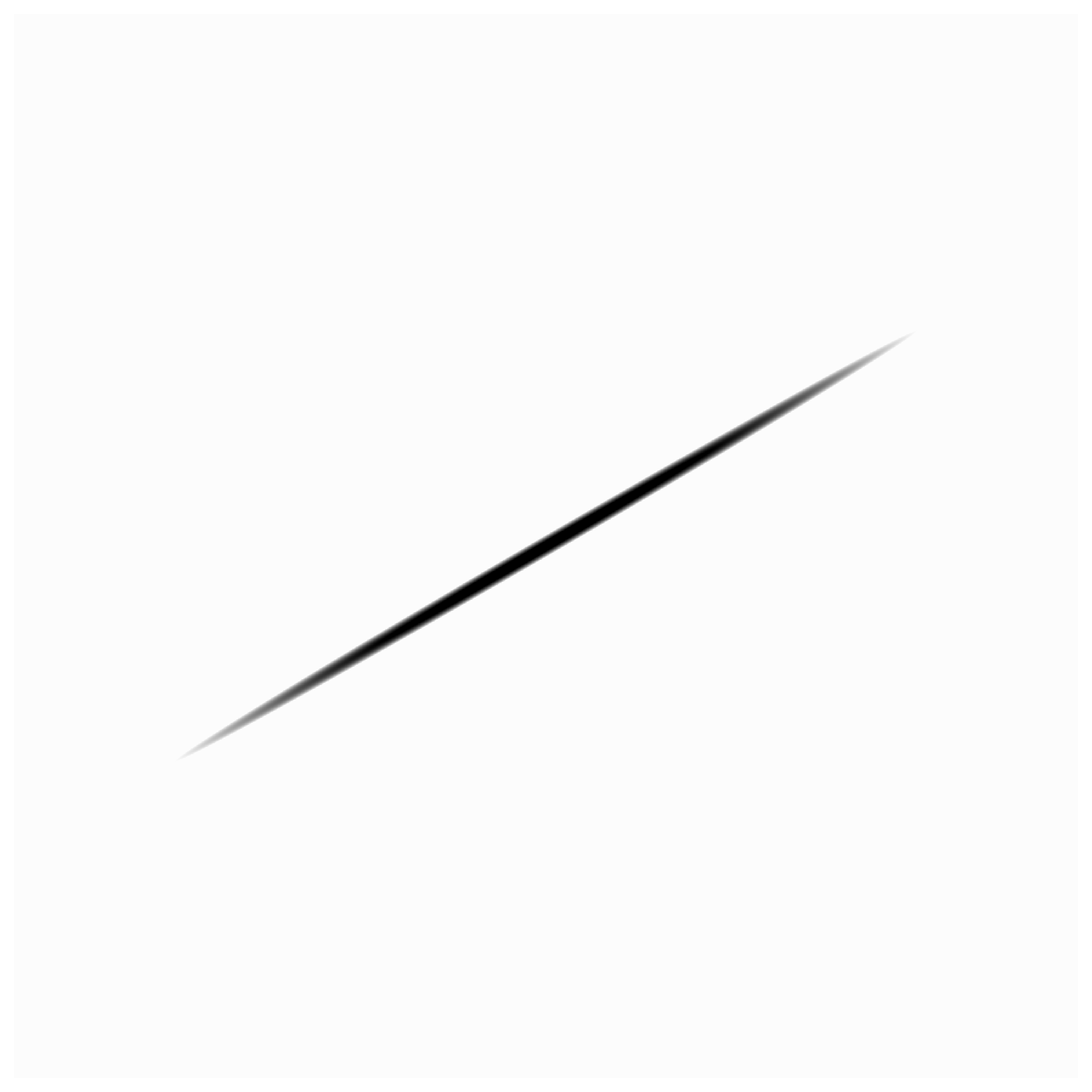}&
\includegraphics[width=2.5cm]{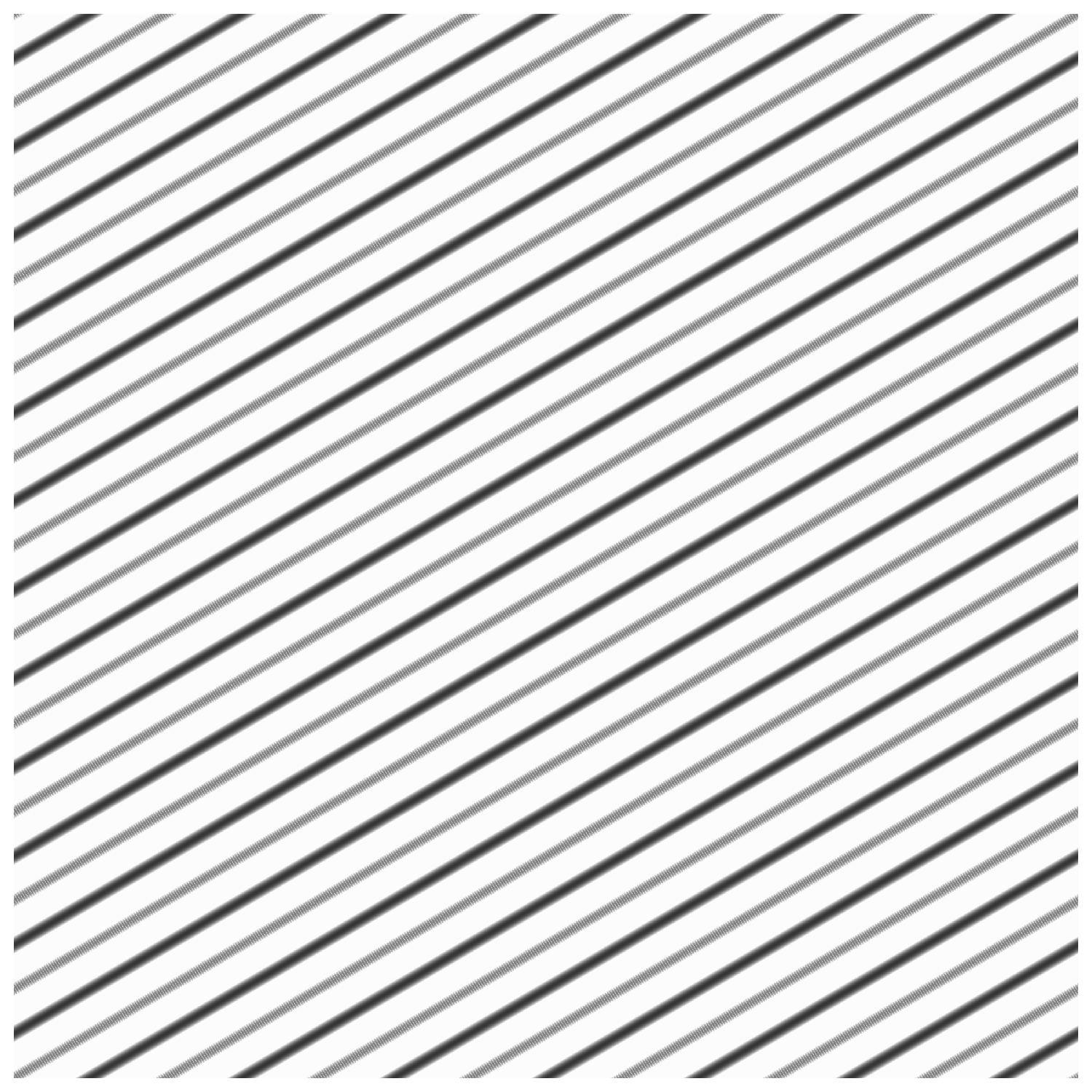} & $j=9$&
\includegraphics[width=2.5cm]{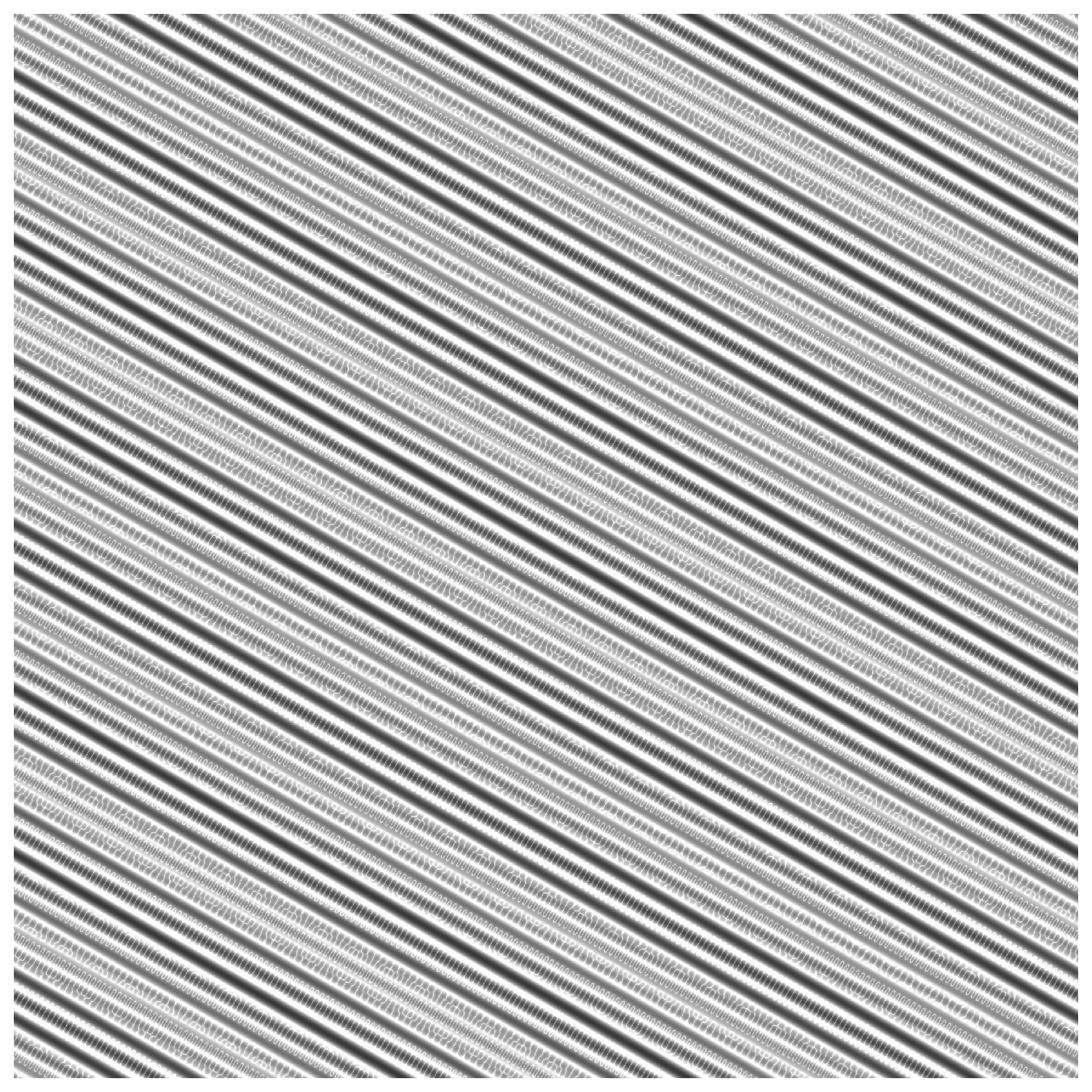}&
\includegraphics[width=2.5cm]{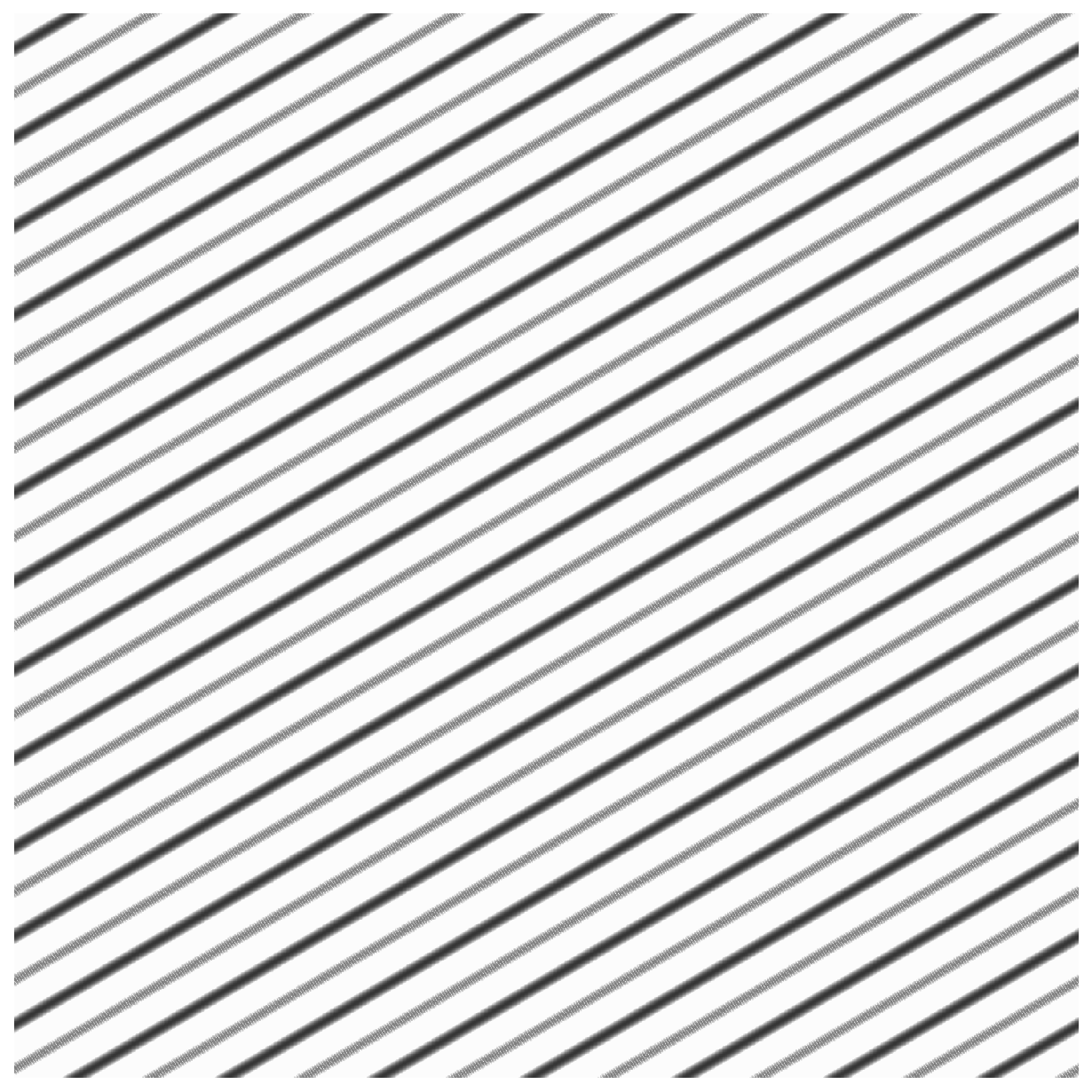} \\
$j=3$&
\includegraphics[width=2.5cm]{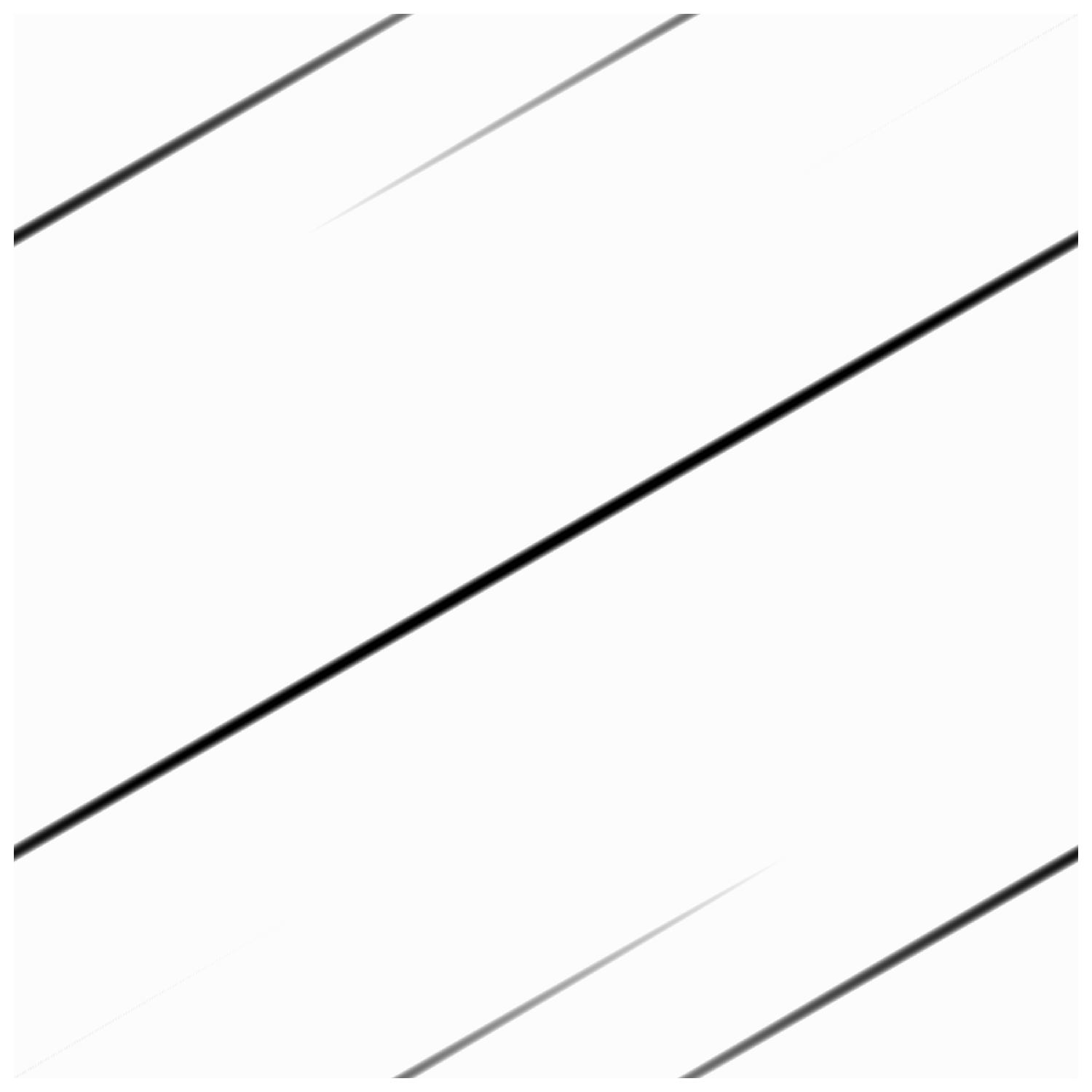}&
\includegraphics[width=2.5cm]{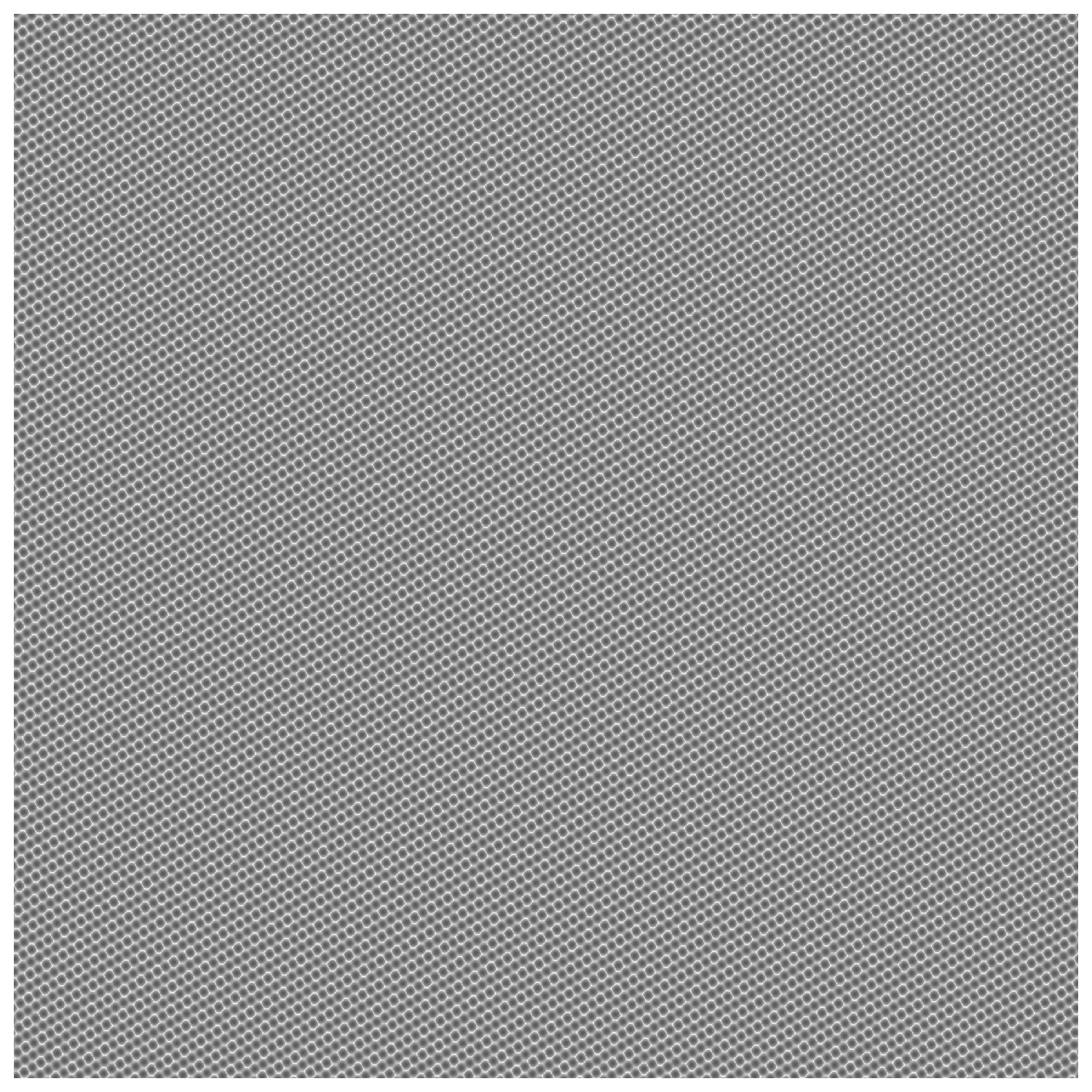} & $j=10$&
\includegraphics[width=2.5cm]{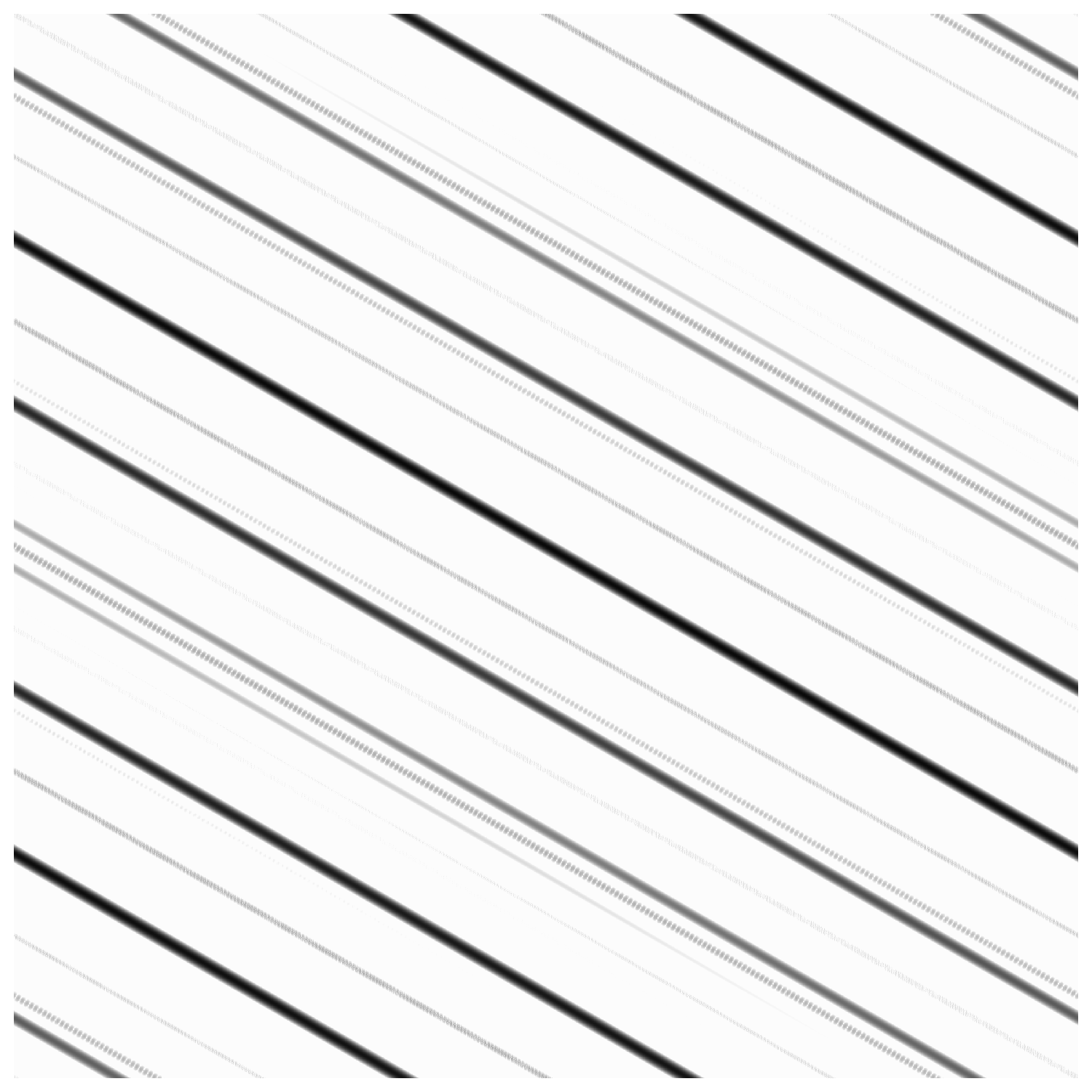}&
\includegraphics[width=2.5cm]{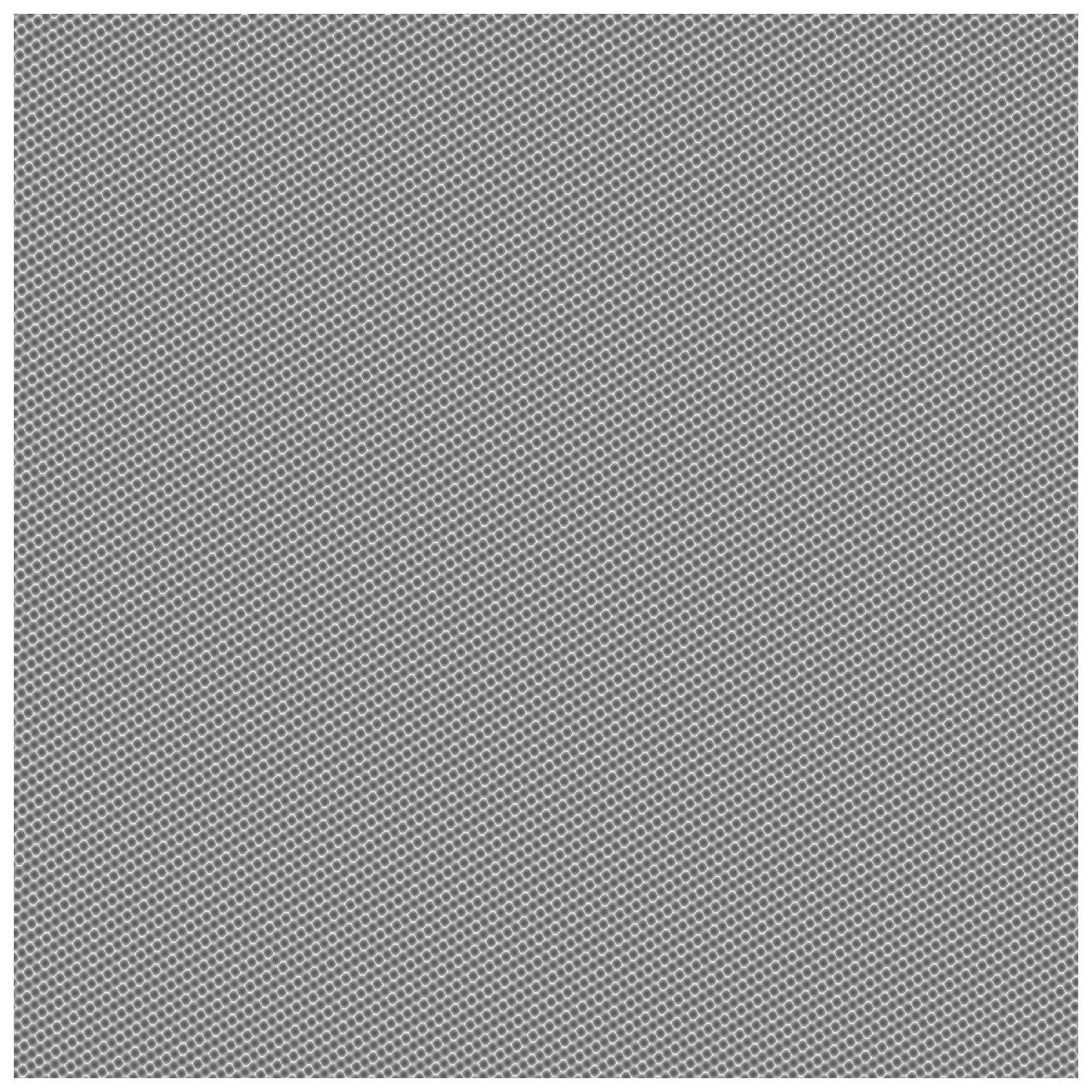} \\
$j=4$&
\includegraphics[width=2.5cm]{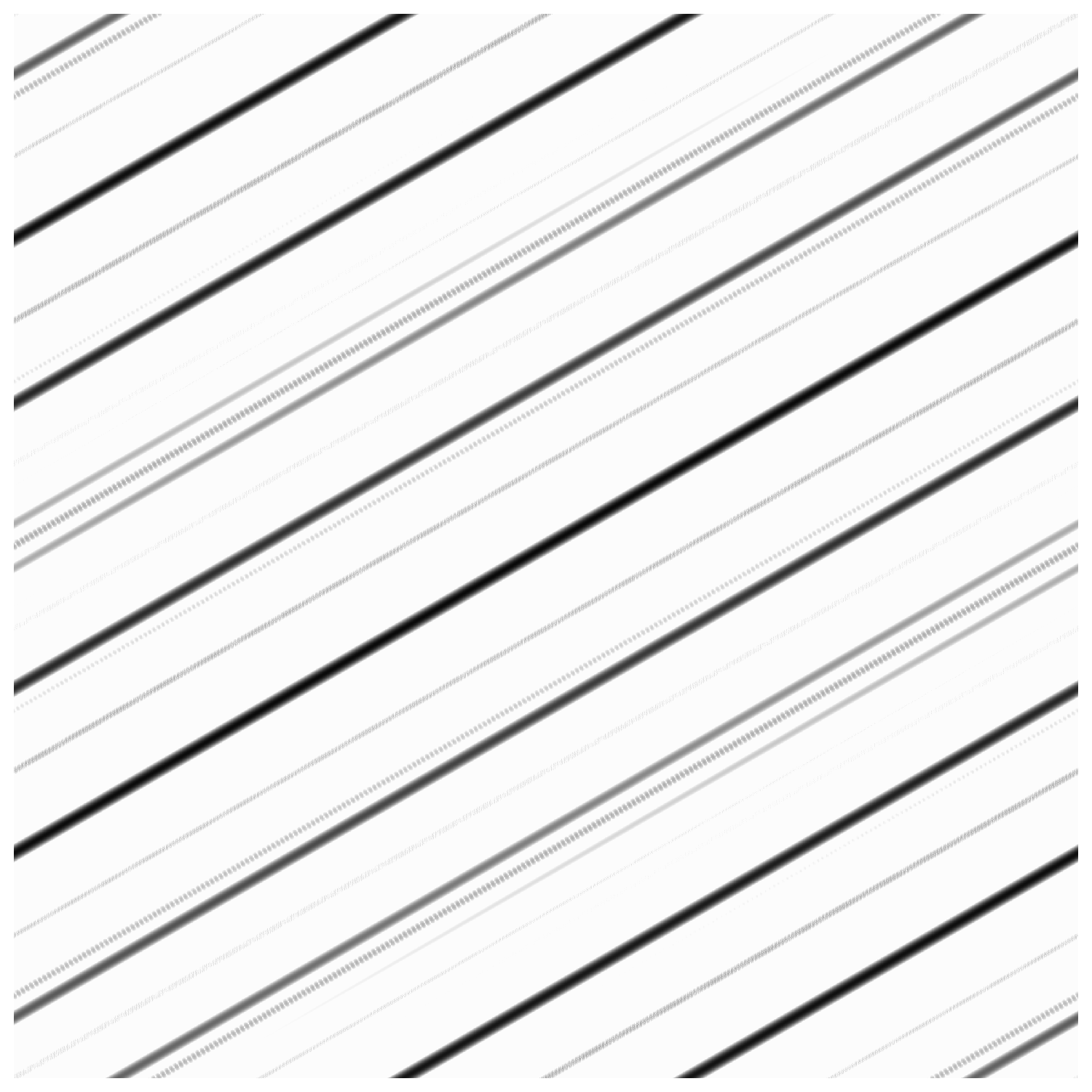}&
\includegraphics[width=2.5cm]{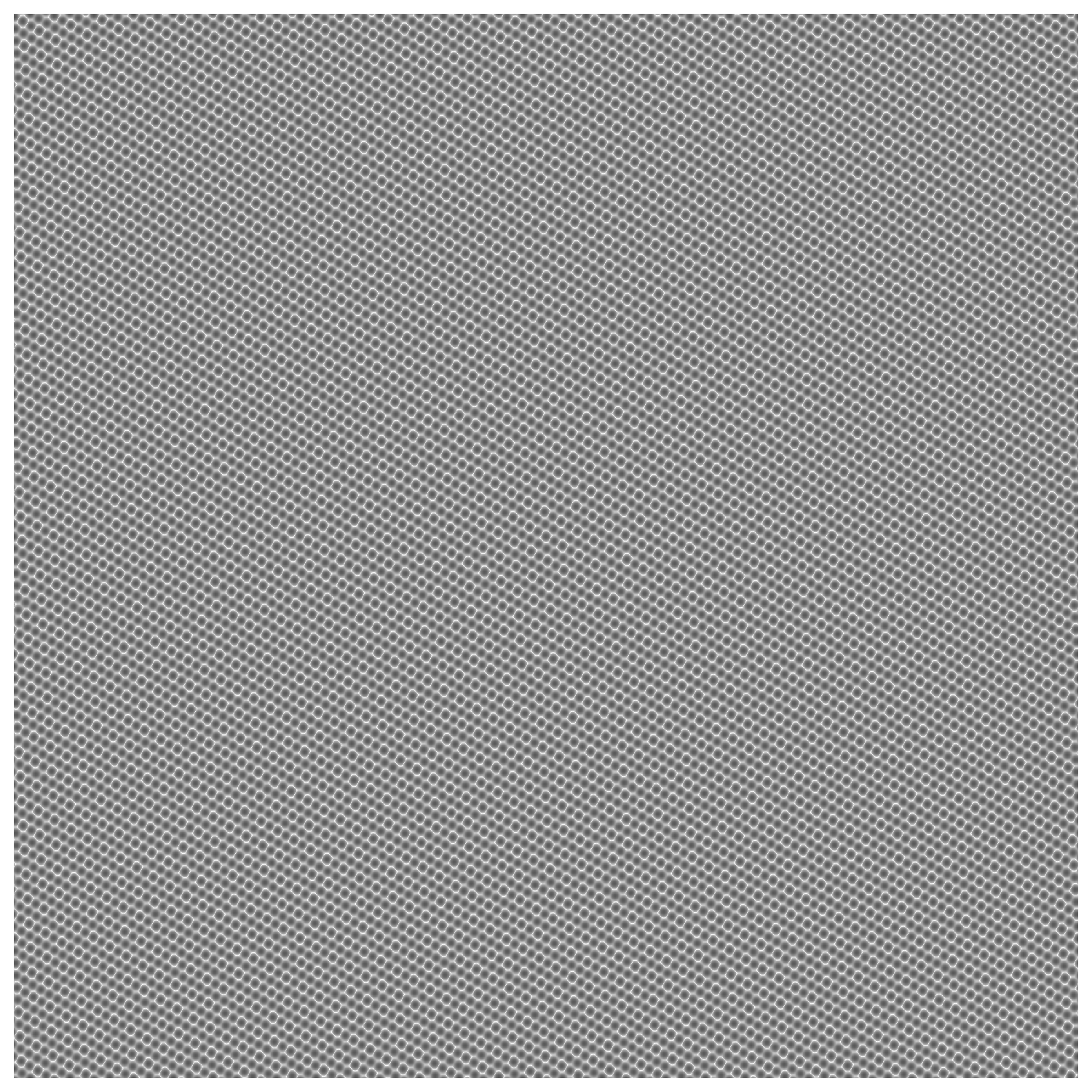} & $j=11$&
\includegraphics[width=2.5cm]{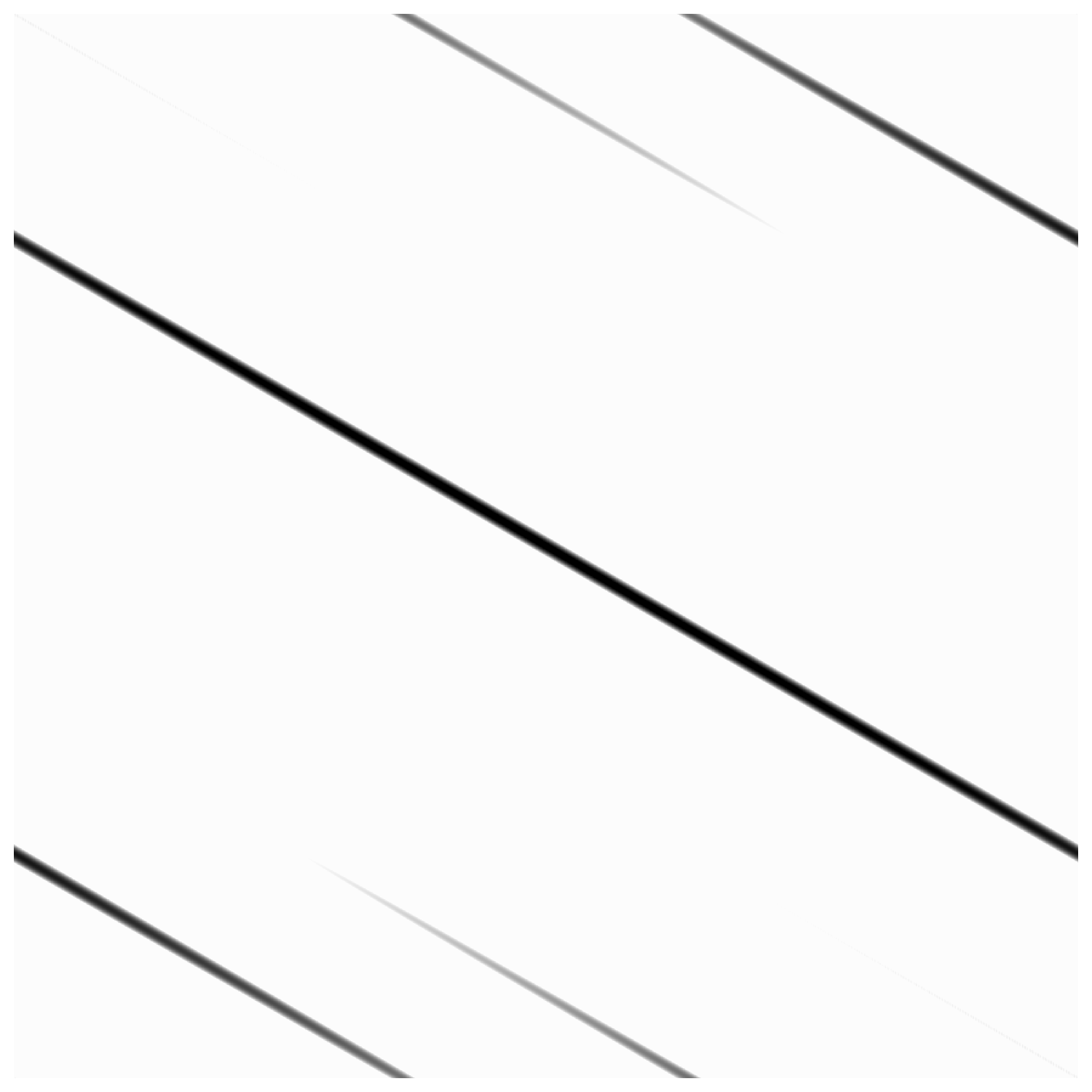}&
\includegraphics[width=2.5cm]{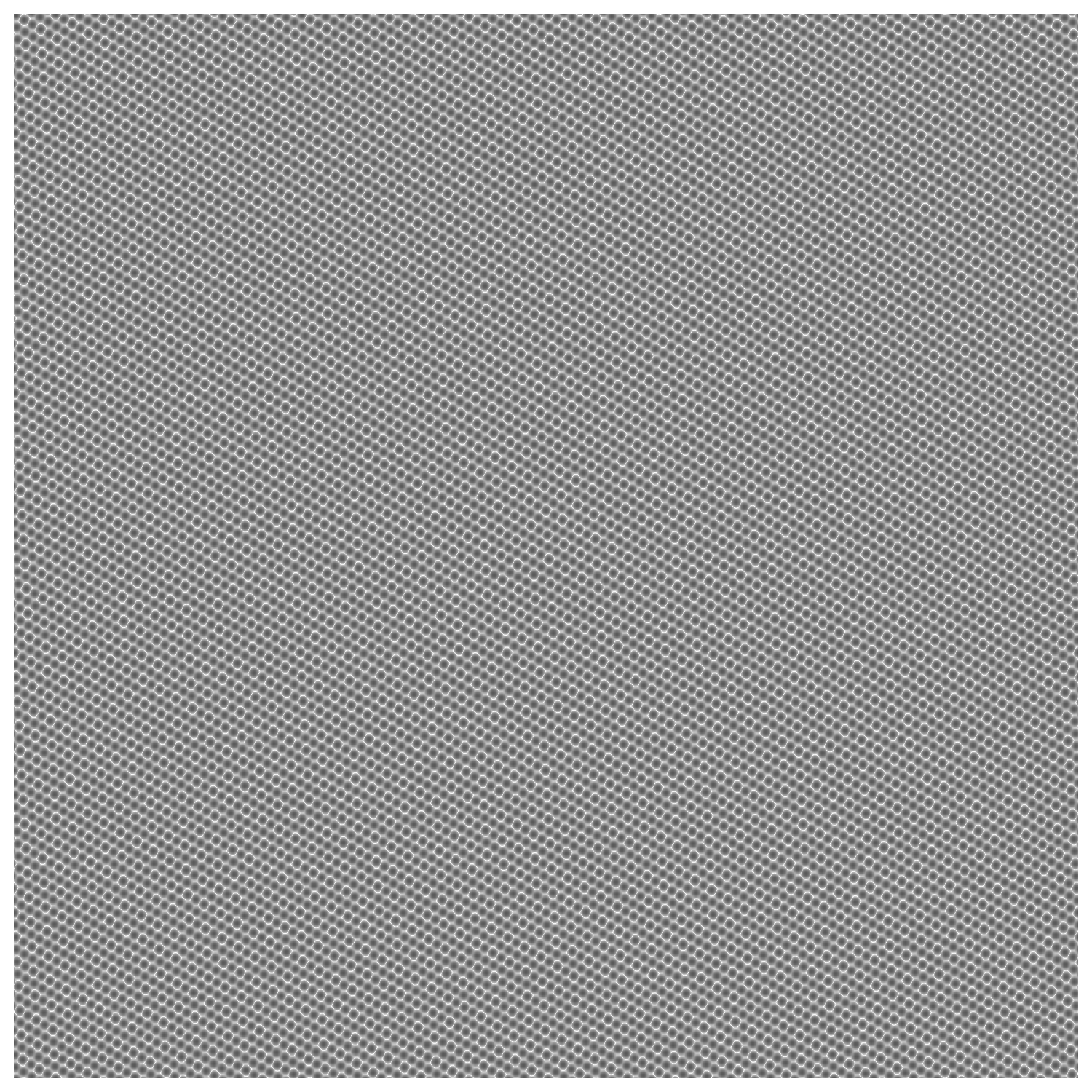} \\
$j=5$&
\includegraphics[width=2.5cm]{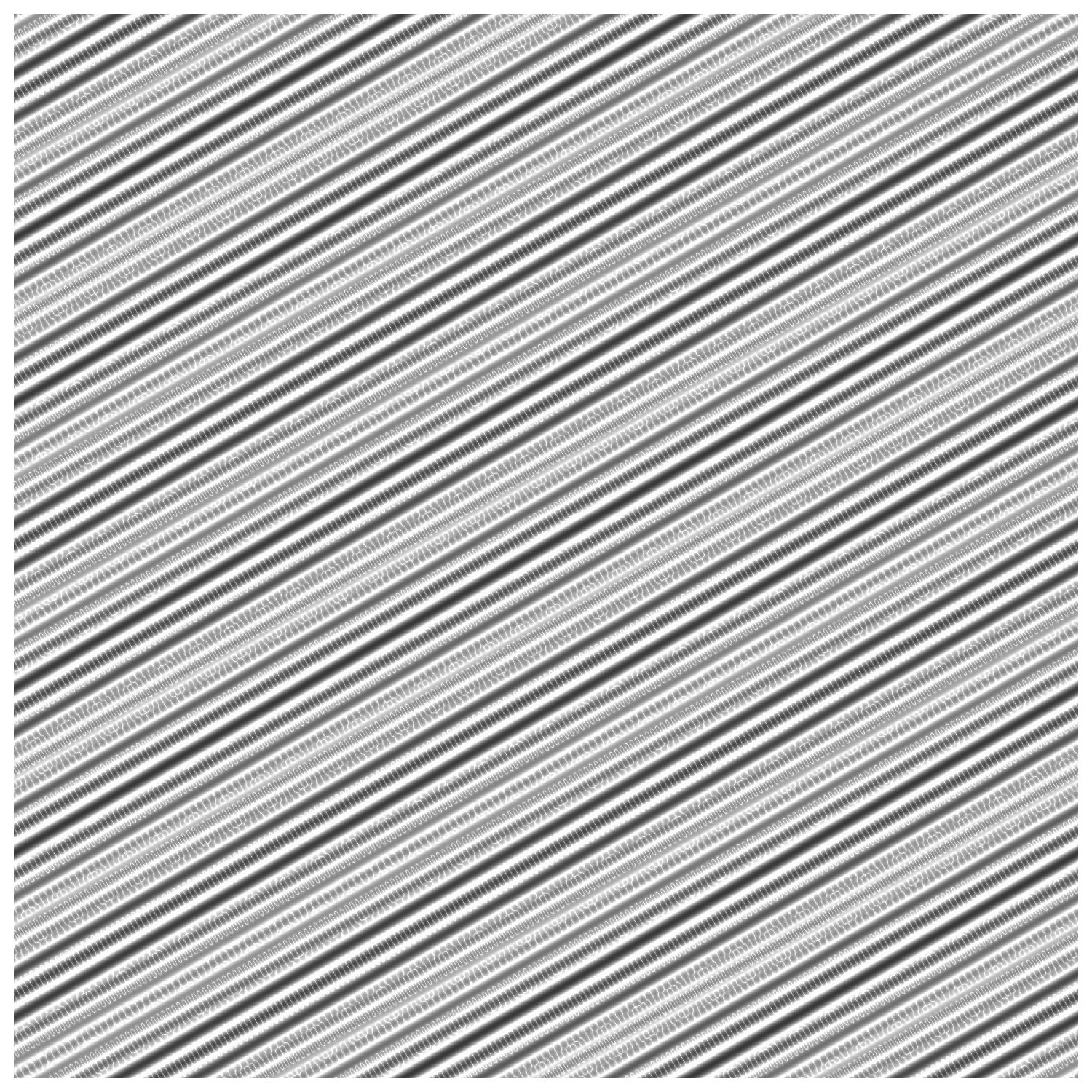}&
\includegraphics[width=2.5cm]{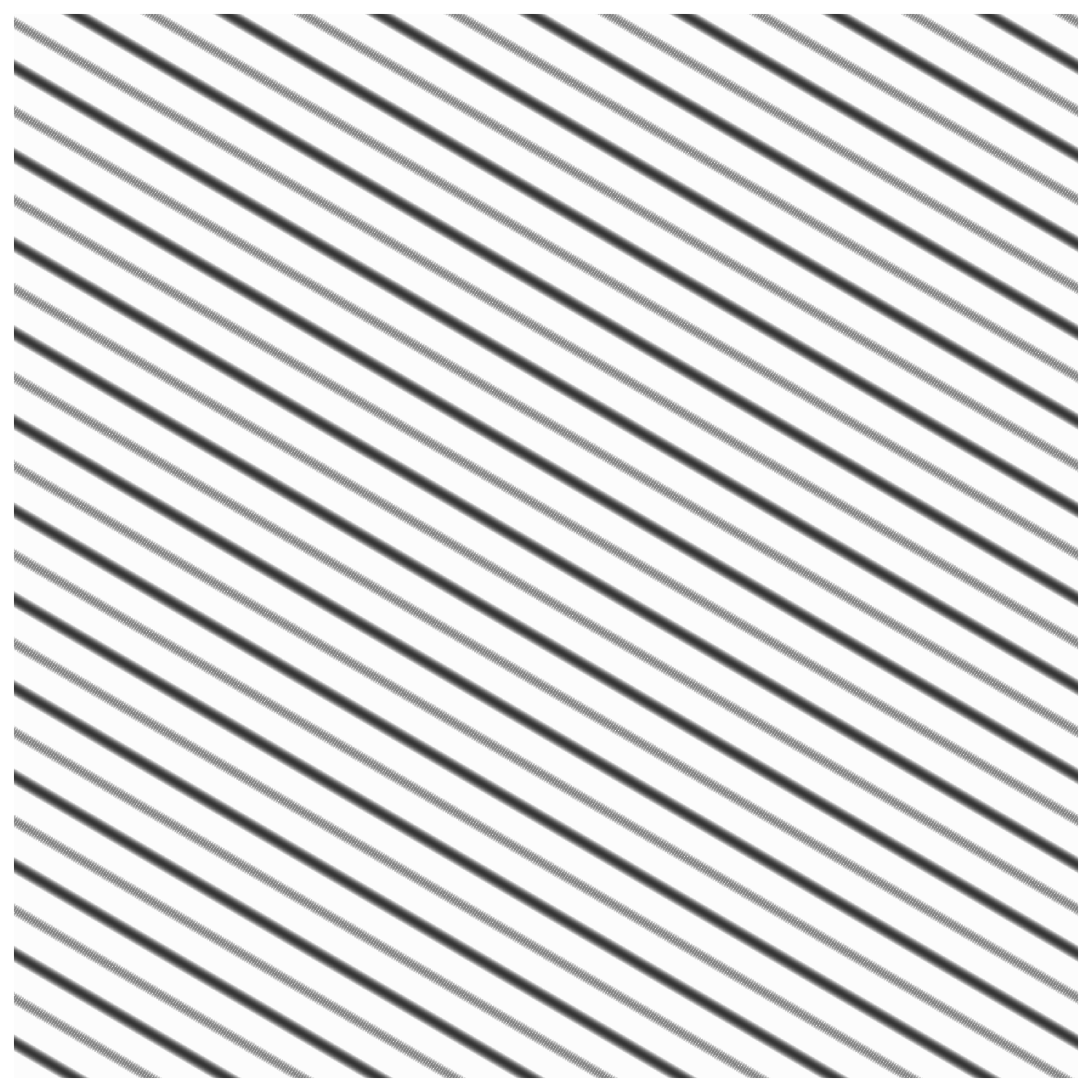} & $j=12$&
\includegraphics[width=2.5cm]{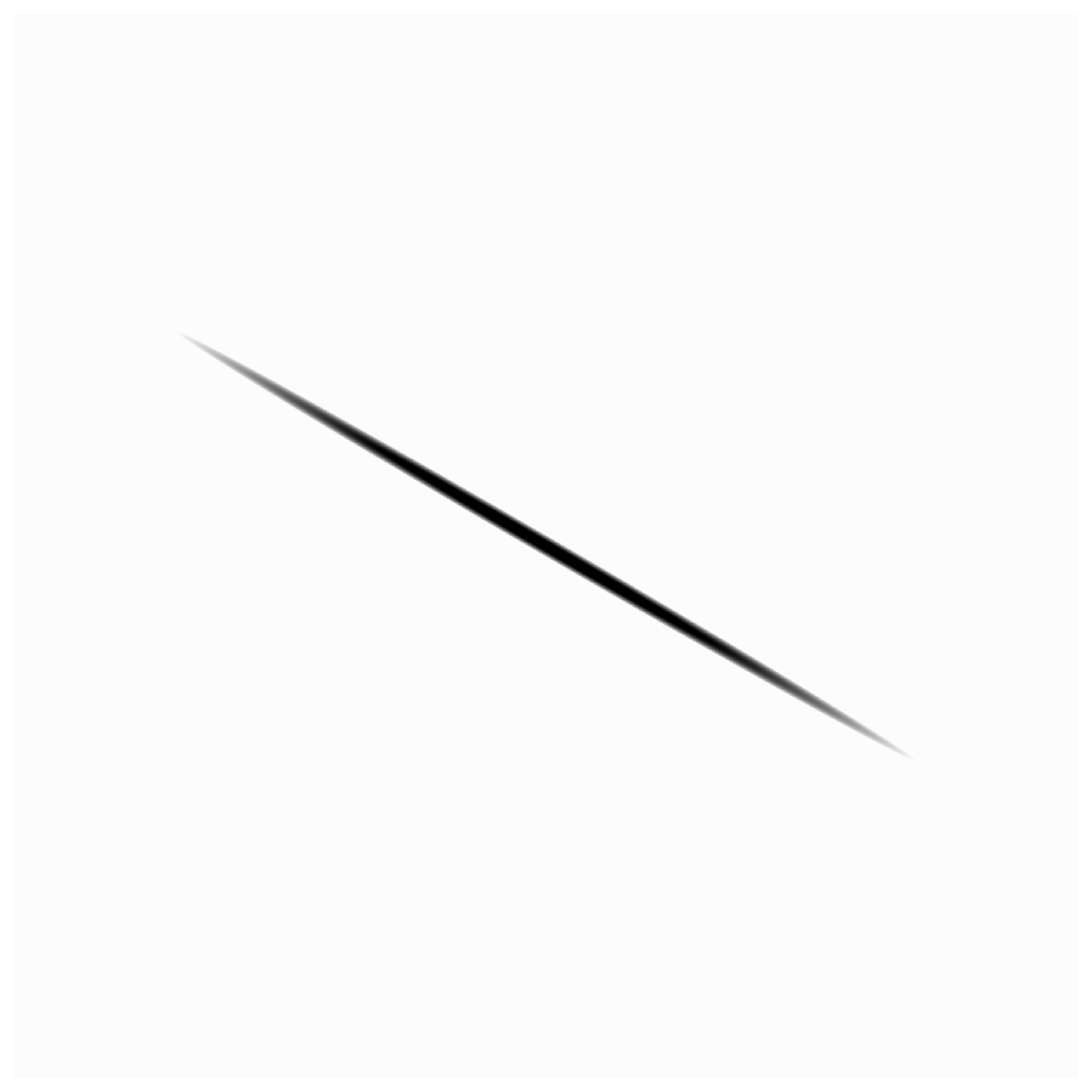}&
\includegraphics[width=2.5cm]{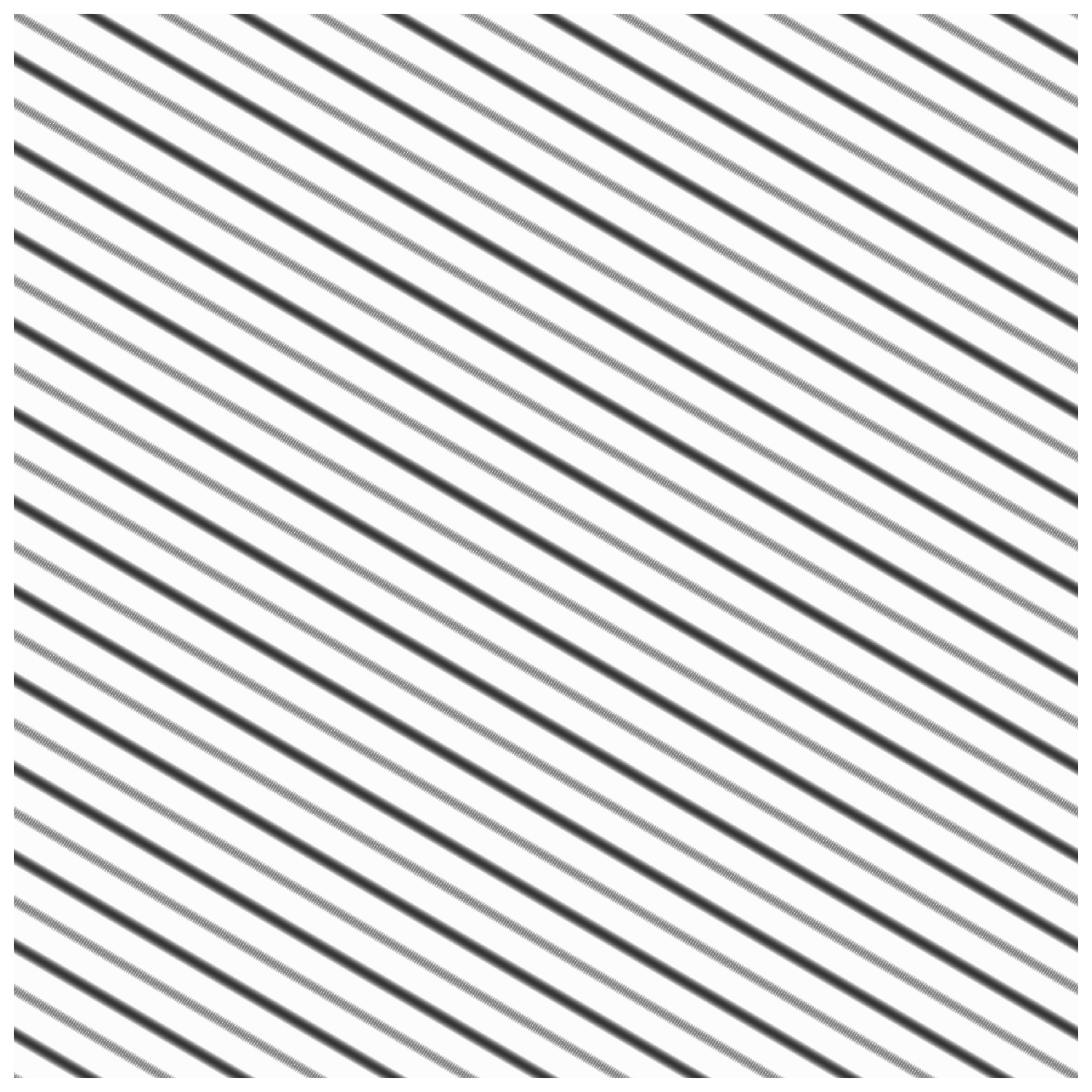} \\
$j=6$&
\includegraphics[width=2.5cm]{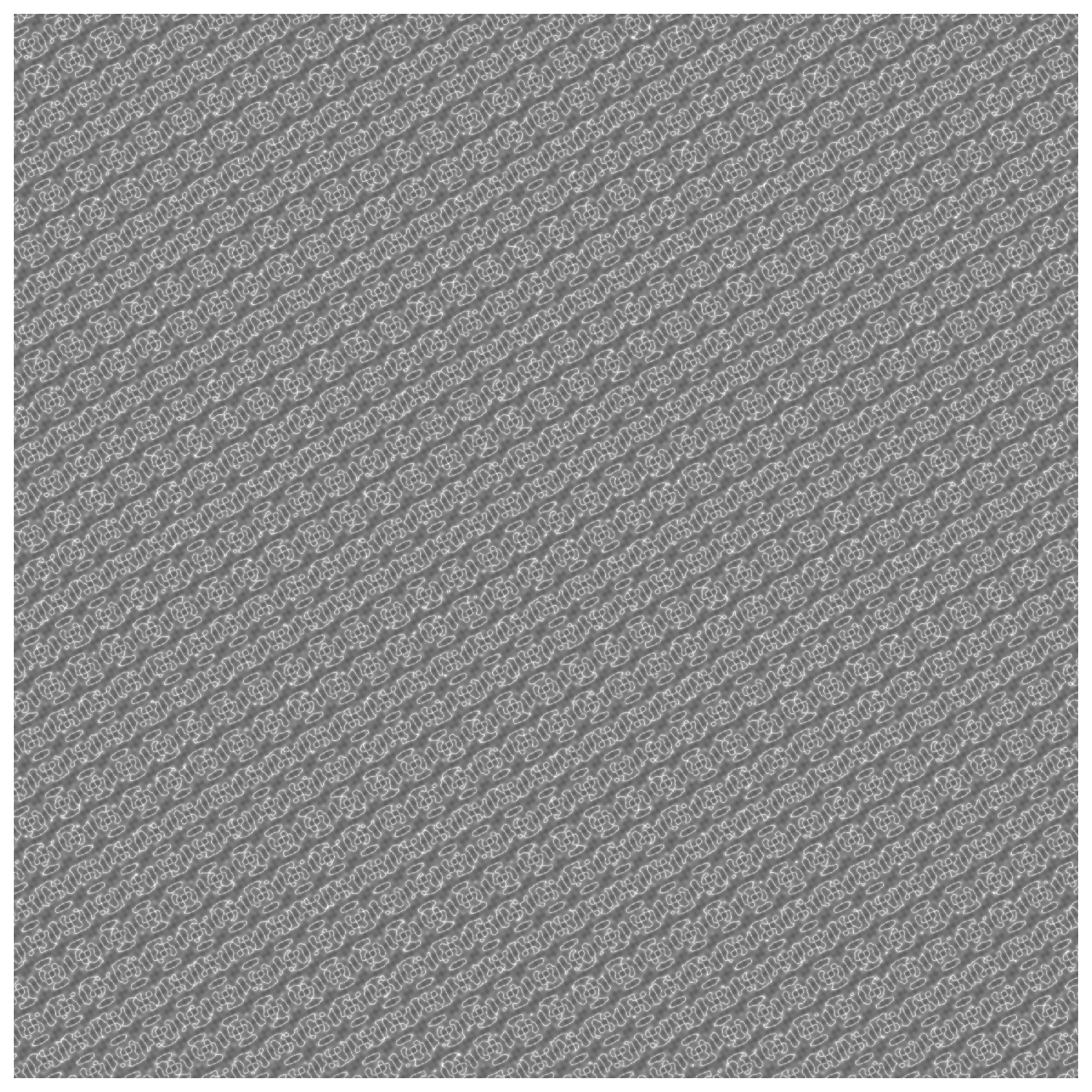}&
\includegraphics[width=2.5cm]{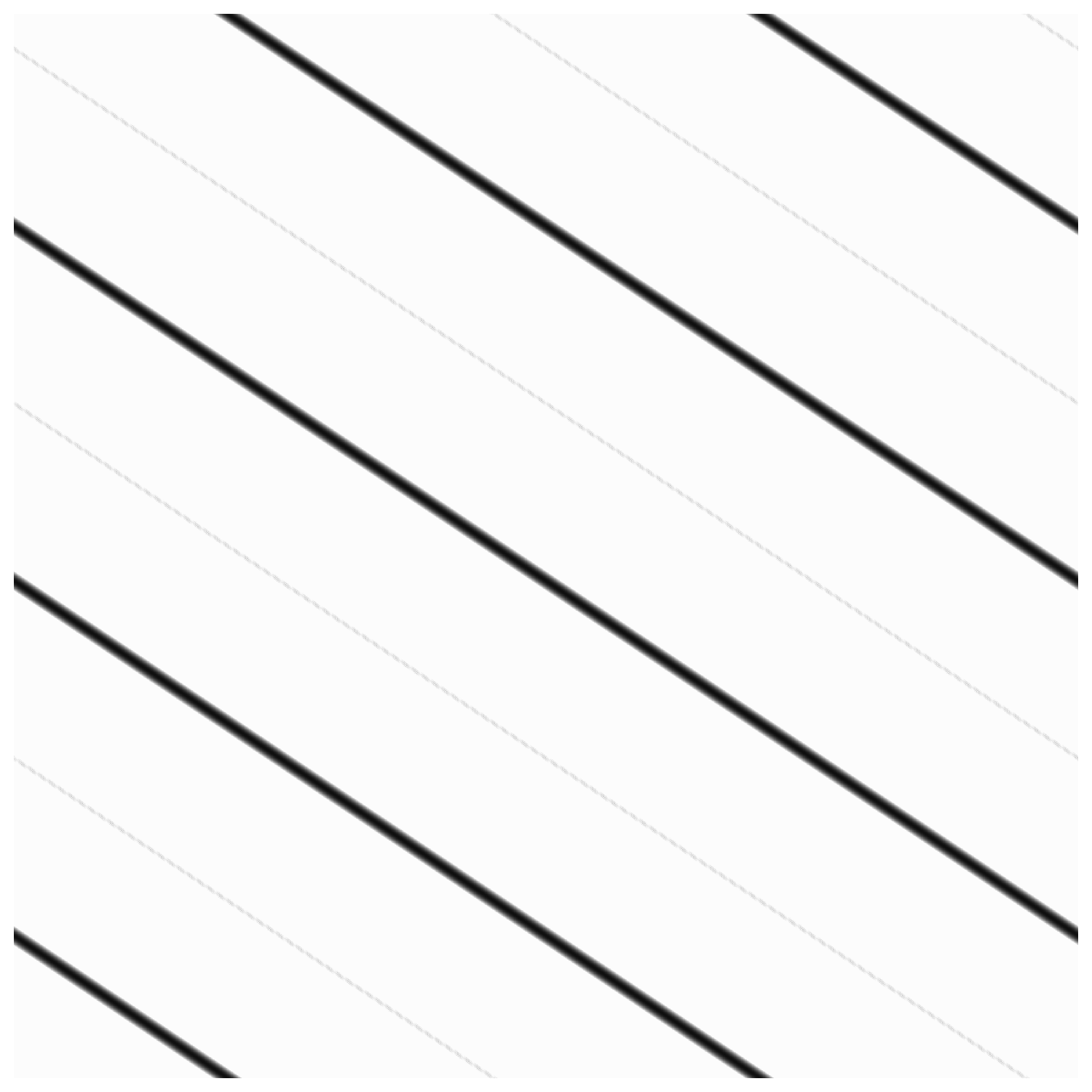} & $j=13$&
\includegraphics[width=2.5cm]{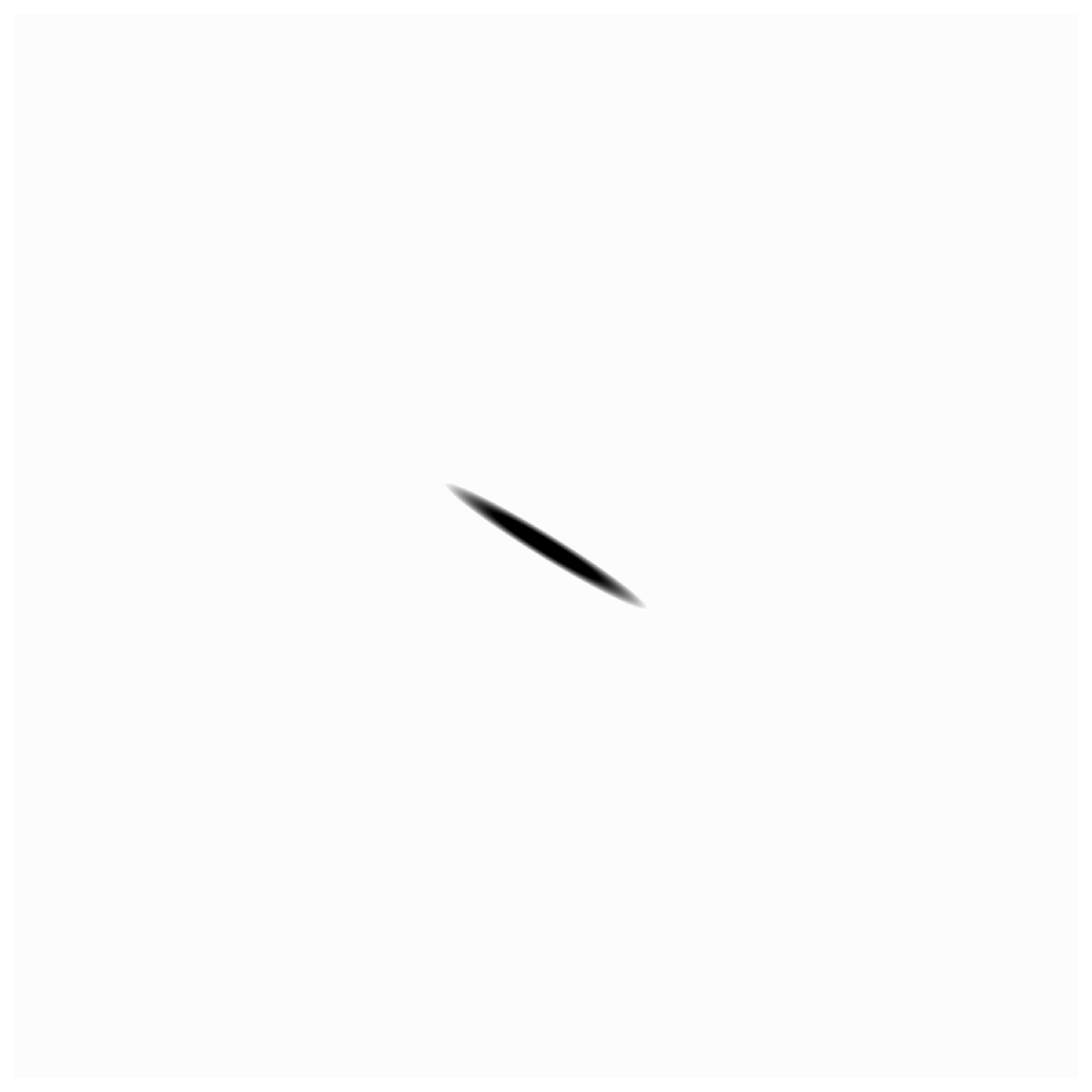}&
\includegraphics[width=2.5cm]{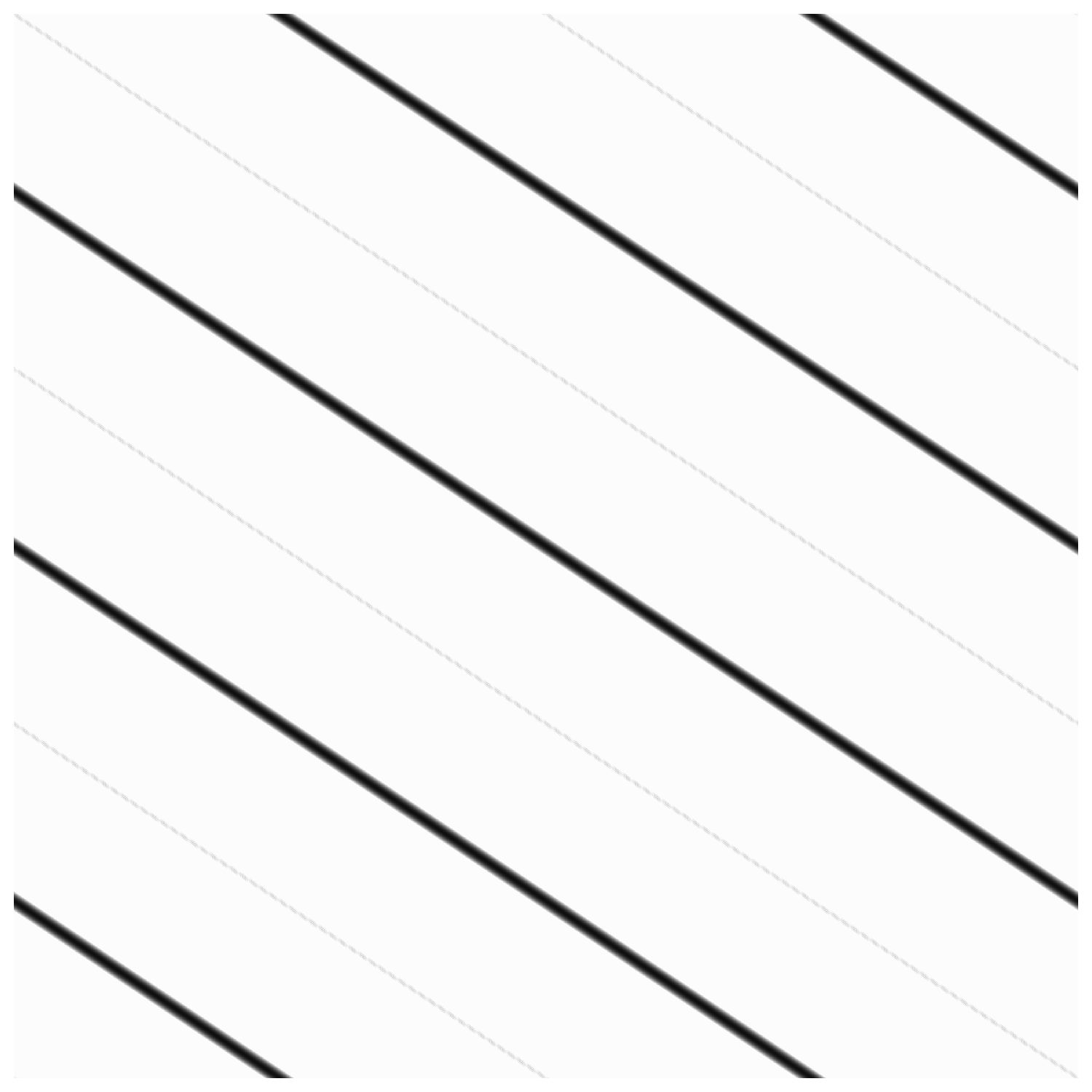} \\
\end{tabular}
\caption{In each of the two columns, the left image shows the Wigner function for $M^j g$. This particular Gaussian is centered at $(1/2, 1/2)$, a fixed point for $A$. The right images in each of the columns shows the Wigner function for $M^j e^0_{N/2}$.  Halfway through the period, $W_{M^je^0_{N/2}}(x)$ becomes less chaotic, something that does not happen for $W_{M^j g}(x)$. }\label{fig:wigner_j}
\end{figure}

Similar eigenfunctions were studied in \cite{scarred}. Specifically, the paper examines eigenfunctions of the form 
\begin{equation}\label{eq:gaussian}
\frac{1}{t_k} \sum_{t=0}^{t_k-1} e^{-i \frac{\phi_k + 2\pi s}{t_k}t}M^t g(x),
\end{equation}
where $g(x)$ is a Gaussian function. The limit of these eigenfuctions was shown to be half ergodic and half localized, in the sense of semiclassical measures.
Although 
\eqref{eq:delta} and \eqref{eq:gaussian} appear very similar, they actually exhibit distinct behavior, as demonstrated by Figure~\ref{fig:wigner_j} and Figure~\ref{fig:total_wigner}. In these figures, $A=\begin{bmatrix} 2 & 3 \\ 1 & 2 \end{bmatrix}$, $k=14$, and $N_k = 5822$.

\begin{figure}
   \centering
\begin{tabular}{lcc}
\includegraphics[width=6.5cm]{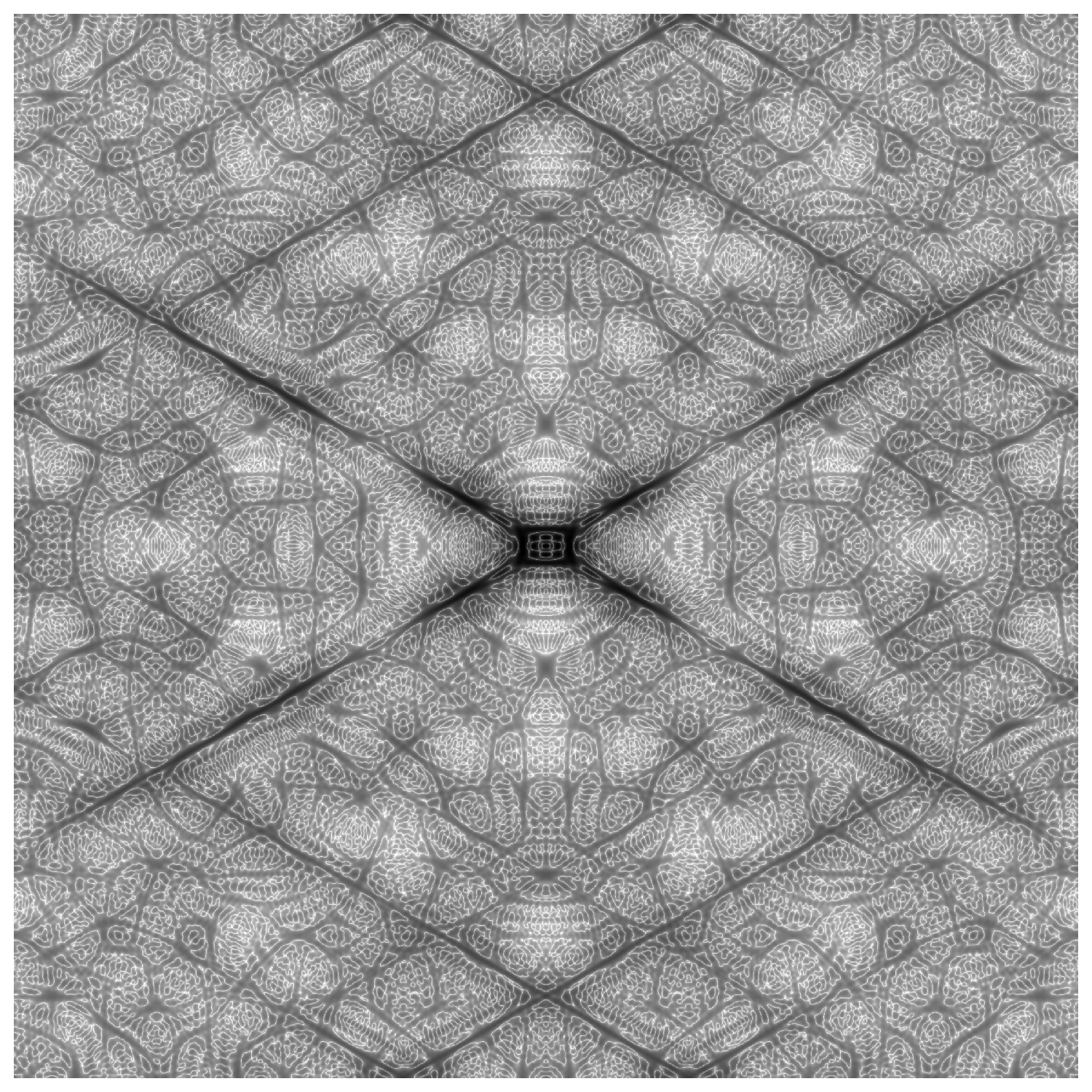}&
\includegraphics[width=6.5cm]{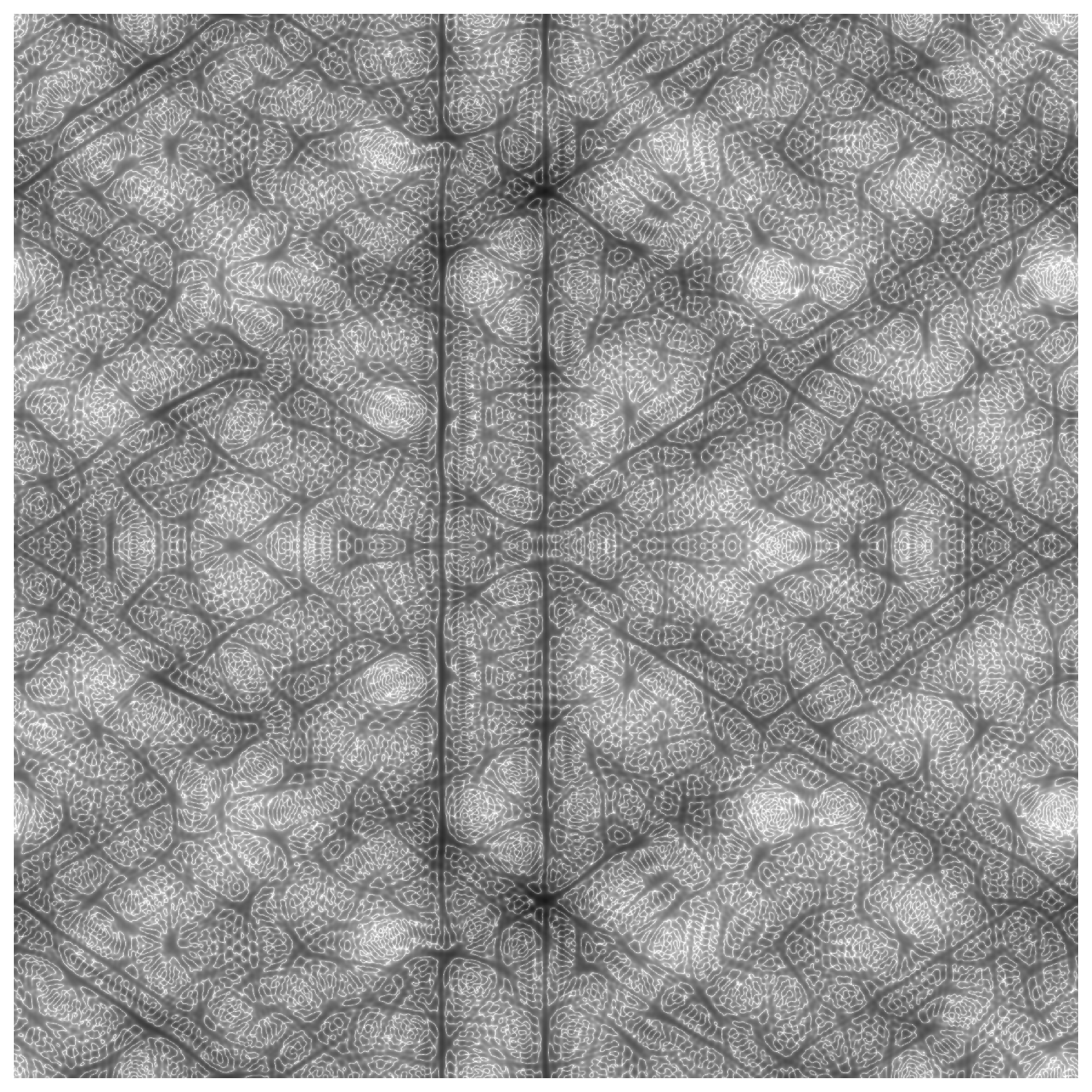} 
\end{tabular}
\caption{The left image shows the Wigner function for the eigenfunction \eqref{eq:gaussian}, where the Gaussian is centered at $(1/2, 1/2)$, a fixed point. The right image shows the Wigner function for the eigenfunction \eqref{eq:delta}.  Notice how the left image illustrates localization at $(1/2, 1/2)$, while the right image does not. We conjecture that the 
limit of eigenfunctions \eqref{eq:delta} equidistributes in the sense of semiclassical measures.}\label{fig:total_wigner}
\end{figure}

\section{Proof of Theorem \ref{thm:upperbound}}\label{secproofofupperbound}
We examine $A \in \SL(2, \Z)$ of the form (\ref{e:A-intro}). Note that the conditions on $A$ are less restrictive than those in  Section \ref{secproofoflowerbound}.
Let $N$ be odd.  We begin with the following dispersive estimate.
\begin{proposition}\label{propupperbound}
For $N$ odd and $M_{N, 0}: \cH_N(0) \rightarrow \cH_N(0)$, we have
\begin{align*}
    \left|\lrang{M_{N,0} e_j^0,e_k^0}_\mathcal{H} \right|\leq \frac{\sqrt{|b|}}{\sqrt{N}}.
\end{align*}
\end{proposition}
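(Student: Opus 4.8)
The plan is to compute $M e_j^0$ directly from the explicit kernel of Lemma~\ref{lem:explicitformula} and then read off the matrix entry from the lattice structure of $\cH_N(0)$. First note that $|a+d|>2$ forces $b\neq 0$ (if $b=0$ then $ad=1$, so $a=d=\pm1$ and $|a+d|\leq 2$), hence Lemma~\ref{lem:explicitformula} applies. Since $\theta=0$ we have $e_j^0=\tfrac1{\sqrt N}\sum_{k\in\Z}\delta\!\left(\,\cdot\,-k-\tfrac jN\right)$; substituting this into \eqref{eq:explictformM} and setting $l=Nk+j$ gives, as a tempered distribution in $x$ (the oscillatory series converging in $\rS'(\R)$),
\begin{equation*}
M e_j^0(x)=\frac1{\sqrt{|b|}}\,e^{\pi i N d x^2/b}\sum_{l\equiv j\ (\mathrm{mod}\ N)} e^{-2\pi i x l/b}\,\psi(l),\qquad \psi(l):=e^{\pi i a l^2/(N b)} .
\end{equation*}

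The heart of the argument is to resum this series. Writing $l=j+Nm$, the crucial observation is that $ab$ is even (this is part of hypothesis~\eqref{e:A-intro}), so $abN$ is even and therefore $m\mapsto\psi(j+Nm)$ is periodic with period dividing $|b|$: the would-be obstruction $e^{\pi i abN}$ equals $1$. Splitting the sum over $m$ into residue classes modulo $|b|$ and applying Poisson summation in the form $\sum_{s\in\Z}e^{2\pi i Nsx}=\tfrac1N\sum_{n\in\Z}\delta\!\left(x-\tfrac nN\right)$, one obtains
\begin{equation*}
M e_j^0=\sum_{n\in\Z}c_n\,\delta\!\left(\,\cdot\,-\tfrac nN\right),\qquad c_n=\frac{\zeta_n}{N\sqrt{|b|}}\sum_{r=0}^{|b|-1}\psi(j+Nr)\,e^{-2\pi i nr/b},\quad |\zeta_n|=1 .
\end{equation*}
Since $|\psi|\equiv1$, each $c_n$ equals $\zeta_n/(N\sqrt{|b|})$ times a sum of at most $|b|$ unimodular terms, whence $|c_n|\leq\sqrt{|b|}/N$.

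It remains to identify $\langle M_{N,0}e_j^0,e_k^0\rangle_\cH$ with $c_k$ up to a harmless factor. Because $\phi_A=0$, the operator $M$ descends to $M_{N,0}\colon\cH_N(0)\to\cH_N(0)$, so $M e_j^0\in\cH_N(0)$; consequently $(c_n)$ is $N$-periodic, and since $e_k^0=\tfrac1{\sqrt N}\sum_{l\in\Z}\delta\!\left(\,\cdot\,-l-\tfrac kN\right)$ we may rewrite $M e_j^0=\sum_{k=0}^{N-1}\bigl(\sqrt N\,c_k\bigr)e_k^0$. Hence $\langle M_{N,0}e_j^0,e_k^0\rangle_\cH=\sqrt N\,c_k$, and the bound $|c_k|\leq\sqrt{|b|}/N$ yields the claimed estimate $\sqrt{|b|}/\sqrt N$.

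The step I expect to be the main obstacle is the resummation: one must push the quadratic Gauss sum through carefully and recognize that it is precisely the evenness of $abN$ (guaranteed by $ab\in 2\Z$) that collapses the oscillatory series onto the lattice $\tfrac1N\Z$, with period exactly $|b|$ in the residue index $r$. If that were not available, $M e_j^0$ would a priori be supported on $\tfrac1{2N}\Z$, and one would additionally have to evaluate the relevant quadratic Gauss sums to check that the coefficients sitting at odd multiples of $\tfrac1{2N}$ vanish. A secondary technicality is making rigorous sense of the oscillatory-series identity above and of the coefficient extraction, which I would handle by leaning on the already-established inclusion $M(\cH_N(0))\subset\cH_N(0)$ rather than manipulating divergent sums by hand. (The standing hypotheses genuinely used here are $b\neq0$, $\phi_A=0$, and $ab\in 2\Z$; oddness of $N$ does not appear to enter this particular estimate beyond being a blanket assumption of the section.)
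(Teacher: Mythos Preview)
Your proof is correct and follows essentially the same route as the paper: apply the explicit kernel of Lemma~\ref{lem:explicitformula} to $e_j^0$, use the evenness of $ab$ to get periodicity of period $|b|$ in the residue index, Poisson-sum onto the lattice $\tfrac1N\Z$, and bound the resulting coefficient by the triangle inequality over $|b|$ unimodular terms. The one difference is that the paper verifies the $N$-periodicity $c_{l+N}=c_l$ by a direct index-shift computation (replacing $r$ by $r+d$ and using $ad\equiv 1\bmod b$), whereas you deduce it more cleanly from the already-established inclusion $M(\cH_N(0))\subset\cH_N(0)$; your observation that oddness of $N$ is not actually used here is also correct.
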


\begin{proof}
We begin by calculating $M_{N, 0} e_j^0$. By Lemma~\ref{lem:explicitformula}, for $\Phi(x, y) = \frac{d}{2b} x^2 - \frac{xy}{b} + \frac{a}{2b} y^2$,

$$M_{N, 0} e^0_j(x) = \frac{1}{\sqrt{|b|}} \int_\R e^{2 \pi iN \Phi(x, y)} \sum_{k \in \Z} \delta \left(y- \frac{Nk+j}{N} \right) dy=\frac{1}{\sqrt{|b|}} \sum_{k \in \Z} e^{2 \pi i N \Phi\left(x, \frac{Nk+j}{N}\right)}.$$

Thus, we focus on calculating $\Phi\left(x, \frac{Nk+j}{N}\right)$ mod $N\Z$.  In the following, we set $k = mb + r$ for $0 \leq r \leq |b| -1$.  We have
$$\Phi\left(x, \frac{Nk +j}{N} \right) =\frac{d}{2b}x^2 - xm - \frac{xr}{b} - \frac{xj}{Nb}+  \frac{ar^2}{2b}  + \frac{arj}{bN} + \frac{aj^2}{2bN^2} \mod N\Z.$$

Thus, using the Poisson summation formula and the fact that $\delta(x-x_0) f(x) = \delta(x-x_0) f(x_0)$, we know
\begin{align*}
M_{N,0} e_j^0(x) &= \frac{1}{\sqrt{|b|}}\sum_{r=0}^{|b|-1} e^{\frac{2 \pi i}{b} \left(  \frac{aNr^2}{2}  + arj + \frac{aj^2}{2N}\right)}  \sum_{m \in \Z}  e^{\frac{2 \pi i N}{b} \left(\frac{d}{2} x^2 -xr- \frac{xj}{N}\right)} e^{-2 \pi i N xm}\\
&=\frac{1}{N\sqrt{|b|}} \sum_{l \in \Z} \sum_{r=0}^{|b|-1} e^{\frac{2 \pi i}{b} \left(  \frac{d l^2}{2N} -lr - \frac{lj}{N} + \frac{ar^2N}{2} + arj + \frac{aj^2}{2N}\right)}   \delta\left(x - \frac{l}{N}\right).
\end{align*}

Now setting $$c_l = \sum_{r=0}^{|b|-1} e^{\frac{2 \pi i}{b} \left(\frac{d l^2}{2N} -lr - \frac{lj}{N} + \frac{ar^2N}{2} + arj + \frac{aj^2}{2N}  \right)},$$ we want to show that $c_l$ has period $N$, in other words $c_l =c_{l+N}$.
We know that$$c_{l+N} = e^{\frac{2 \pi i}{b} \left(\frac{d l^2}{2N} +dl + \frac{dN}{2} - \frac{lj}{N} -j + \frac{aj^2}{2N}  \right)}\sum_{r=0}^{|b|-1} e^{\frac{2 \pi i}{b} \left(-lr  -Nr+ \frac{ar^2N}{2} + arj \right)} =\alpha_{l+N} \sum_{r=0}^{|b|-1} \beta_{l+N, r}.$$

Recall in Section \ref{preliminaries}, we showed in order for $M_{N, 0}$ to descend  $\cH_N(0)$ to itself, $ab$ must even. Therefore, $$\beta_{l, r+b} = e^{\frac{2 \pi i}{b} \left( -lr -lb + \frac{ar^2 N}{2} +abrN + \frac{ab^2 N}{2}+ arj + abj \right)}=e^{\frac{2 \pi i}{b} \left( -lr + \frac{ar^2 N}{2} + arj  \right)}=\beta_{l,r}.$$
Therefore, $c_{l+N} = \alpha_{l+N} \sum_{r=d}^{|b|-1+d} \beta_{l+N, r}$.
As $ad-bc=1$, we know  $ad= 1$ mod $b$. Then,
$$c_{l+N} = \alpha_{l+N} \sum_{r=d}^{|b|-1+d} e^{\frac{2 \pi i}{b} \left(-lr  -Nr+ \frac{ar^2N}{2} + arj \right)} = \alpha_{l+N} \sum_{r=0}^{|b|-1} e^{\frac{2 \pi i}{b} \left(-lr  -ld-Nd+ \frac{ar^2N}{2} + \frac{dN}{2} + arj +j \right)}=c_l.$$

Using the fact that $c_k = c_{k+lN}$ for all $l \in \Z$ and that $\{e_j^0\}$ is an orthonormal basis, we have
\begin{align*}
&\lrang{M_{N,0} e_j^0(x), e_k^0(x)}_{\cH}\\
&= \frac{1}{\sqrt{N|b|}}   \lrang{\frac{1}{\sqrt{N}} \sum_{l \in \Z} c_{k+lN} \delta\left(x - \frac{Nl +k}{N}\right),  \frac{1}{\sqrt{N}} \sum_{l \in \Z} \delta\left(x- \frac{Nl + k}{N} \right)}_\cH\\
&= \frac{1}{\sqrt{N|b|}} \sum_{r=0}^{|b|-1}  e^{\frac{2 \pi i}{b} \left(  \frac{ar^2N}{2}  + arj + \frac{aj^2}{2N} + \frac{dk^2}{2N}- kr - \frac{kj}{N} \right)}.
\end{align*}

Using the triangle inequality, we conclude $|\lrang{M_{N,0} e_j^0(x), e_k^0(x)}_{\cH}| \leq \frac{\sqrt{|b|}}{\sqrt{N}}$. 
\end{proof}

\begin{figure}
    \centering
\includegraphics[scale=.39]{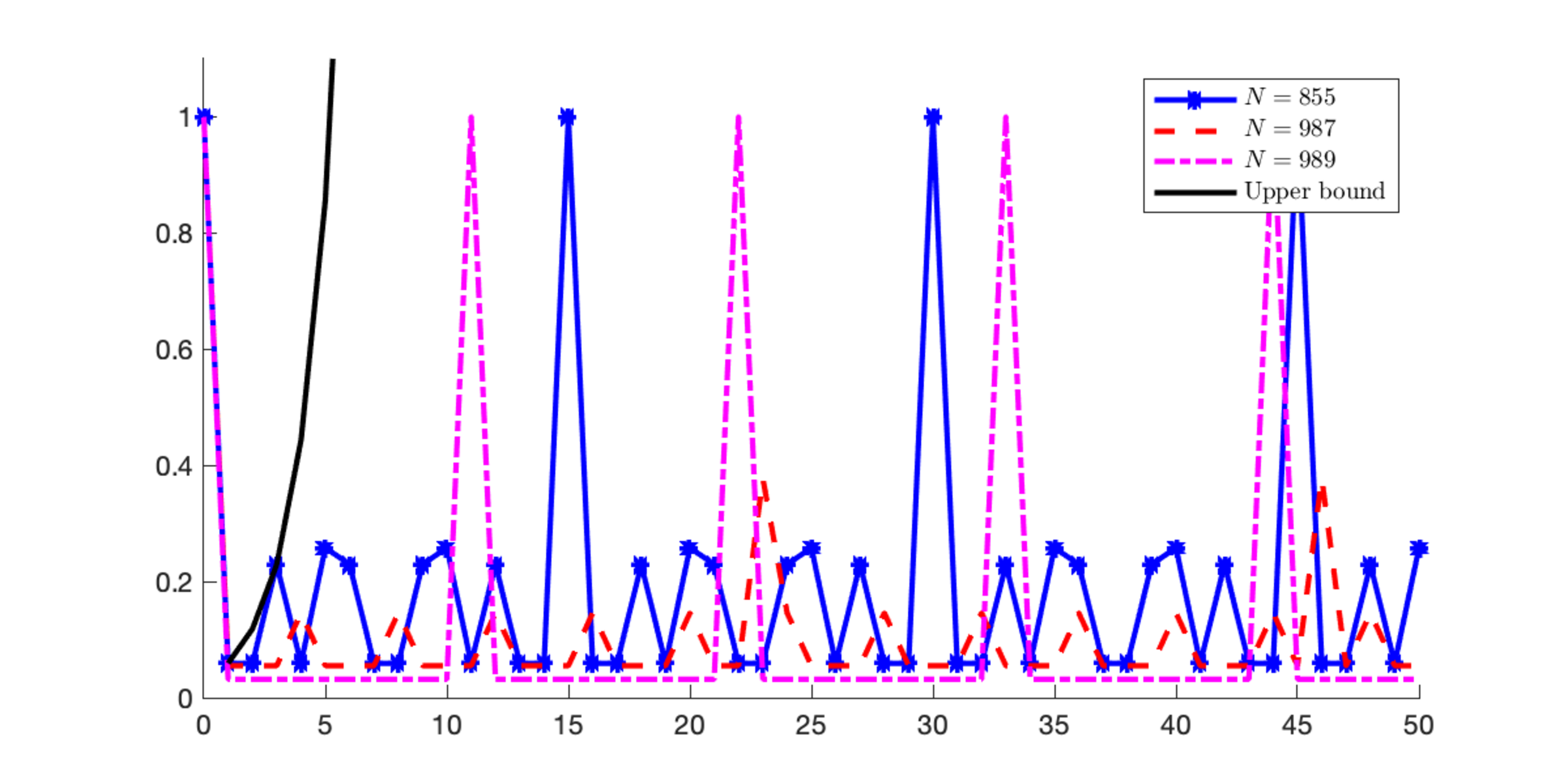}
    \caption{The plot of $\|M_{N,0}^j\|_{\ell^1 \rightarrow \ell^\infty}$ for $0 \leq j \leq 50$ and several values of $N$. $M_{N,0}$ corresponds to $A=\begin{bmatrix} 2 & 3 \\ 1 &2  \end{bmatrix}$. We also plot the upper bound $\sqrt{|b|/N}$ for $N=855$.}
    \label{fig:cat-prop}
\end{figure}

For $j>0$, we use the notation $A^j = \begin{bmatrix} a_j & b_j \\ c_j & d_j \end{bmatrix}$. As a direct consequence of Proposition \ref{propupperbound}, for $j>0$, $$\left\|M_{N,0}^j\right\|_{\ell^1 \rightarrow \ell^\infty}  \leq \frac{\sqrt{|b_j|}}{\sqrt{N}}.$$ Figure \ref{fig:cat-prop} compares this upper bound to actual values of $\|M_{N,0}^j\|_{\ell^1 \rightarrow \ell^\infty}$. Now as $\Tr A>2$, $A$ has eigenvalues $\lambda$, $\lambda^{-1}$ with $\lambda>1$. Then we have $|b_j| \sim \lambda^j$, giving  $$\left\|M_{N,0}^j\right\|_{\ell^1 \rightarrow \ell^\infty}  \leq \frac{C \lambda^{j/2}}{\sqrt{N}}.$$ 
Suppose $u$ is an eigenfunction of $M_{N, 0}$ with eigenvalue $\mu$. As  $M_{N, 0}$  is unitary, $|\mu| =1$. We have $u = \mu^{-n} M_{N,0}^n u$, which gives $u = \frac{1}{T} \left(\sum_{n=0}^{T-1} \mu^{-n} M^n_{N, 0} \right) u$.

Setting $B= \frac{1}{T} \sum_{n=0}^{T-1} \mu^{-n} M_{N,0}^n$, we see $u=Bu$. Again, as $M_{N, 0}$ is unitary,  $$B^* B= \frac{1}{T^2} \sum_{m,n=0}^{T-1} \mu^{m-n} M^{n-m}_{N,0}.$$ 
Then 
\begin{align*}
\|B^* B\|_{\ell^1 \rightarrow \ell^\infty}&\leq \frac{1}{T} + \frac{1}{T^2} \sum_{\substack{m \neq n \\ 0 \leq m,n \leq T-1}} \|M^{n-m}_{N,0} \|_{\ell^1 \rightarrow \ell^\infty}\\
&\leq \frac{1}{T} + \frac{C}{T^2} \sum_{\substack{m \neq n \\ 0 \leq m,n \leq T-1}} \frac{\lambda^{\frac{n-m}{2}}}{\sqrt{N}}\\
&\leq \frac{1}{T} + \frac{C}{\sqrt{N}}\lambda^{\frac{T}{2}}.
\end{align*}

We set $T= (1-\varepsilon/2)\log_\lambda N$ to get 
$$
\|B^* B\|_{\ell^1 \rightarrow \ell^\infty} \leq \frac{1}{(1-\varepsilon/2)\log_\lambda N} +\frac{C}{\sqrt{\lambda N^\varepsilon}}.
$$
 Finally, we know that $\|B\|^2_{\ell^2 \rightarrow \ell^\infty} = \|B^* B\|_{\ell^1 \rightarrow \ell^\infty}$. Therefore, for $0<\varepsilon<1$, there exists an $N_0$ such that for odd $N \geq N_0$, 
$$\|u\|_{\ell^\infty} \leq \|B\|_{\ell^2 \rightarrow \ell^\infty} \|u\|_{\ell^2} \leq \frac{1}{\sqrt{(1-\varepsilon)\log_\lambda N}}.$$

\begin{bibdiv}
\begin{biblist}

\bib{avakumovic}{article}{
  author = {V. G. Avakumovic},
  journal = {Mathematische Zeitschrift},
  pages = {327-344},
  title = {\"{U}ber die Eigenfunktionen auf geschlossenen Riemannschen Mannigfaltigkeiten},
  volume = {65},
  year = {1956},
}

\bib{berard}{article}{
  title = {On the Wave Equation on a Compact Riemannian Manifold without Conjugate Points},
  journal = {Mathematische Zeitschrift},
  volume = {155},
  pages = {249-276},
  year = {1977},
  author = {P. B\'{e}rard}
}

\bib{HB1980}{article}{
  title={Quantization of Linear Maps-Fresnel Diffraction by a Periodic Grating},
  author={ M.V. Berry},
  author={J.H. Hannay},
  volume={267},
  number={1},
  year={1980},
  publisher={Physica D}
}

\bib{Bonechi-DeBievre2000_Article_ExponentialMixingAndTimeScales}{article}{
  title={Exponential Mixing and {$|\log \hbar|$} Time Scales  in Quantized Hyperbolic Maps on the Torus},
  author={F. Bonechi},
  author={S. {De Bi\`{e}vre}},
  journal={Communications in Mathematical Physics},
  volume={211},
  number={3},
  pages={659--686},
  year={2000},
  publisher={Springer}
}

\bib{Bouzouina-deBievre}{article}{
  title={Equipartition of the Eigenfunctions of Quantized Ergodic Maps on the Torus},
  author={A. Bouzouina},
  author={S. {De Bi\`{e}vre}},
  volume={179},
  number={1},
  pages={83--105},
  year={1996},
  publisher={Communications in Mathematical Physics}
}

\bib{scarred}{article}{
  title={Scarred Eigenstates for Quantum Cat Maps of Minimal Periods},
  author={S. De Bi{\`e}vre},
  author={S. Nonnenmacher},
  author={F. Faure},
  journal={Commun. Math. Phys.},
  pages={449--492},
  volume={239},
  year={2003}
}

\bib{dyatlov2021semiclassical}{article}{
  title={Semiclassical Measures for Higher Dimensional Quantum Cat Maps},
  author={S. Dyatlov},
  author={M. J{\'e}z{\'e}quel},
  journal={Ann. Henri Poincar{\'e}},
  year={2023}
}

\bib{hormander}{article}{
  author = {L. H\"{o}rmander},
  journal = {Acta Math.},
  pages = {193 - 218},
  title = {The Spectral Function of an Elliptic Operator},
  volume = {121},
  year = {1968},
}

\bib{iwaniecsarnak}{article}{
  title = {$L^\infty$ Norms of Eigenfunctions of Arithmetic Surfaces},
  journal = {Ann. Math.},
  volume = {141},
  number = {2},
  pages = {301-320},
  year = {1995},
  author = {H. Iwaniec},
  author = {P. Sarnak}
}

\bib{Kurlberg-1}{article}{
  author = {P. Kurlberg},
  journal = {Ann. Henri Poincar\'{e}},
  pages = {75 - 89},
  title = {Bounds on Supremum Norms for Hecke Eigenfunctions of Quantized Cat Maps},
  volume = {8},
  year = {2007},
}

\bib{Kurlberg-Rudnick-0}{article}{
  author = {P. Kurlberg},
  author = {Z. Rudnick},
  journal = {Duke Math. J.},
  pages = {47-77},
  title = {Hecke Theory and Equidistribution for the Quantization of Linear Maps of the Torus},
  volume = {103},
  number = {1},
  year = {2000}
}

\bib{Kurlberg-Rudnick-1}{article}{
  title = {Value Distribution for Eigenfunctions of Desymmetrized Quantum Maps},
  journal = {International Mathematics Research Notices},
  author = {P. Kurlberg},
  author = {Z. Rudnick},
  volume = {2001},
  number = {18},
  pages = {985-1002},
  year = {2001},
}

\bib{levitan}{article}{
  author = {B. M. Levitan},
  journal = {Izv. Akad. Nauk SSSR Ser. Mat.},
  pages = {33–58},
  title = {On the Asymptotic Behavior of a Spectral Function and on Expansion in Eigenfunctions of a Self-Adjoint Differential Equation of Second Order},
  volume = {19},
  number = {1},
  year = {1955}
}

\bib{Olofsson-1}{article}{
  title = {Large Supremum Norms and Small Shannon Entropy for Hecke Eigenfunctions of Quantized Cat Maps},
  journal = {Communications in Mathematical Physics},
  author = {R. Olofsson},
  volume = {286},
  number = {3},
  pages = {1051-1072},
  year = {2009}
}
  
\bib{Olofsson-2}{article}{
  title = {Hecke Eigenfunctions of Quantized Cat Maps Modulo Prime Powers},
  journal = {Ann. Henri Poincar\'{e}},
  author = {R. Olofsson},
  volume = {1111},
  number = {10},
  year = {2009}
}

\bib{Olofsson-3}{article}{
  author = {R. Olofsson},
  journal = {Ann. Henri Poincar\'{e}},
  pages = {1285–1302},
  title = {Large Newforms of the Quantized Cat Map Revisited},
  volume = {11},
  year = {2010}
}

\bib{Rudnick-Sarnak}{article}{
  author = {Z. Rudnick},
  author = {P. Sarnak},
  journal = {Commun.Math. Phys.},
  pages = {195--213},
  title = {The behaviour of eigenstates of arithmetic hyperbolic manifolds},
  volume = {161},
  year = {1994}
}

\bib{z12semiclassical}{book}{
  title={Semiclassical Analysis},
  author={M. Zworski},
  volume={138},
  year={2012},
  publisher={American Mathematical Soc.}
}

\end{biblist}
\end{bibdiv}

\end{document}